\documentclass{article}
\usepackage{graphicx} 

\usepackage[utf8]{inputenc} 
\usepackage[T1]{fontenc}    
\usepackage{hyperref}       
\usepackage{url}            
\usepackage{booktabs}       
\usepackage{amsfonts}       
\usepackage{nicefrac}       
\usepackage{microtype}      
\usepackage{xcolor}         
\usepackage{graphicx}      
\usepackage{wrapfig}       
\usepackage[margin=1in]{geometry}

\newcommand{\RB}{\mathbb{R}}

\newcommand{\NB}{\mathbb{N}}

\newcommand{\HB}{\mathbb{H}}

\newcommand{\AC}{\mathcal{A}}
\newcommand{\VC}{\mathcal{V}}

\newcommand{\SC}{\mathcal{S}}
\newcommand{\NC}{\mathcal{N}}
\newcommand{\UC}{\mathcal{U}}
\newcommand{\CC}{\mathcal{C}}

\newcommand{\widetildeparallel}[1]{%
  \widetilde{#1}%
  \kern0.1em 
  ^{\vphantom{#1}\parallel}%
}
\newcommand{\widetildeperp}[1]{%
  \widetilde{#1}%
  \kern0.1em 
  ^{\vphantom{#1}\perp}%
}

\newcommand{\tsig}{\widetilde{\sigma}}

\newcommand{\Lip}{\mathrm{Lip}}
\newcommand{\op}{\mathrm{op}}
\newcommand{\id}{\mathrm{id}}

\usepackage{amsmath}
\usepackage{amssymb}
\usepackage{mathtools}
\usepackage{amsthm}
\usepackage{tabularx}
\usepackage{enumitem}
\usepackage{cleveref}

\theoremstyle{plain}
\newtheorem{theorem}{Theorem}[section]
\newtheorem{proposition}[theorem]{Proposition}

\newtheorem{lemma}[theorem]{Lemma}
\newtheorem{corollary}[theorem]{Corollary}

\theoremstyle{definition}
\newtheorem{definition}[theorem]{Definition}

\theoremstyle{remark}

\title{Centre manifold theorem for maps along manifolds of fixed points}
\author{Lachlan MacDonald}
\date{January 2026}

\begin{document}

\maketitle

\section{Introduction}

\textbf{Historical background:} Centre manifold theorems have played a key role in the theory of high-dimensional dynamical systems, enabling one to reduce the analysis of a given high-dimensional system to an associated lower-dimensional system, along the centre manifold, which is more amenable to analysis. In its classical form \cite{pliss}, the centre manifold theorem concerns the flow of a vector field, and gives the existence and regularity of a locally flow-invariant submanifold (the \emph{centre manifold}) which contains a single, given equilibrium and is tangent at that fixed point to the central\footnote{I.e. spectrum lying in the unit circle} eigenspace of the linearisation of the system. A similar theorem exists providing a centre manifold through the fixed point of a map \cite{hpsinvariant}, whose generalisation is the topic of this paper.

The theorem was subsequently generalised in a number of directions beyond the case of a single equilibrium/fixed point. For flows, the theorem was generalised beyond a single equilibrium to periodic orbits and surfaces in \cite{kelley}, to compact manifolds of fixed points in \cite{fenichel_singular}, to merely invariant compact manifolds \cite{chow_liu_yi} and invariant compact sets in \cite{chow_liu_yi_2}. Centre manifold theorems for maps have also been generalised beyond the classical setting of a single fixed point, with \cite{bonatti_crovisier} proving such a theorem for a diffeomorphism admitting a compact set of fixed points. Surprisingly, a centre manifold theorem for a non-diffeomorphic map admitting a compact manifold of fixed points appears to have not yet been considered. It is the purpose of the present paper to partially bridge this gap with a centre-unstable manifold theorem in precisely this setting.

\vspace{3mm}

\noindent\textbf{Motivation:} The motivation for considering this case comes from the emerging theory of optimisation of deep neural networks using tools from dynamical systems theory \cite{macdonaldeos}. The basic structure pertaining to such problems is the following. One is given a ``model'' $g:\RB^n\rightarrow \RB^m$, whose inputs are ``trainable parameters" and whose outputs are the values taken by the model on a dataset of inputs; one is also given a fixed vector $y\in\RB^m$ of desired outputs, to which the model is fit by minimising the ``loss function''
\[
\ell(x):=\frac{1}{2}|y-g(x)|^2,\qquad x\in\RB^n.
\]
Assuming $g$ to be $C^{\infty}$ for the sake of illustration, one attempts to minimise this loss function using one of a number of variants of \emph{gradient descent}, i.e. by iterating the map
\[
f_{\eta}:x\mapsto x-\eta\nabla\ell(x)
\]
for some choice of step-size $\eta$. A well-known sufficient condition for $\ell$ to decrease along the iterates of $f$ is the \emph{stability condition}
\[
\eta < \frac{2}{\sup_{x}\lambda_{1}(\nabla^2\ell(x))},
\]
where $\lambda_{1}$ denotes the largest eigenvalue and $\nabla^2\ell$ is the Hessian of $\ell$. This stability condition is moreover \emph{necessary} when $g$ is linear, in which case the iterates of $f_{\eta}$ diverge if the inequality is reversed. The intuition behind this is clear from the simplest example where $n=1$, $y=0$ and $g(x)=x$; then $\ell(x) = (1/2)x^2$ has $\nabla^2\ell\equiv 1$, and the iterates of $f_{\eta}$ are given by $f_{\eta}^t(x) = (1-\eta)^tx$. If $\eta>2$, \emph{violating} the stability condition, then the iterates $f_{\eta}^t(x)$ diverge exponentially fast from any $x\neq 0$; on the other hand, if $\eta<2$ \emph{satisfies} the stability condition, the iterates converge exponentially fast to the minimiser $x=0$.

In practice, $g$ is a ``deep neural network'' which is far from linear, consisting of a product of high-dimensional matrices with interspersed nonlinearities. One of the great surprises of deep learning practice is that convergence in this more complicated setting is frequently observed to occur empirically even when the \emph{merely pointwise} stability condition $\eta<2/\lambda_{\max}(\nabla^2\ell(x))$ is consistently violated \cite{coheneos}. Although the iterates of $f_{\eta}$ in this large step-size regime decrease $\ell$ non-monotonically, up to a point increasing step size is frequently observed to \emph{accelerate} convergence \cite{coheneos} and result in better statistical performance of the model \cite{keskar}. This large step-size convergence phenomenon has since become the topic of intensive research \cite{ahneos, aroraeos, wangeos, wang2022eos, wang2023eos,damianeos, leeeos, zhueos, agarwalaeos, chen2023eos, kreislereos, wu2023eos, wu2024eos, cai2024eos, kalraeos, liueos, ghosh2025eos}, with \cite{chen2023eos, kreislereos, kalraeos, ghosh2025eos} in particular empirically demonstrating \emph{bifurcations} in the attractors of the iterates of $f$ according to the degree to which the stability threshold is violated. Since $n$ is typically very large, one might expect centre manifold theorems to be of utility in studying these phenomena.

Centre manifolds made their first appearance for the rigorous treatment of such dynamics in an easy setting ($m=1$) in \cite{macdonaldeos}. More generally, they may be expected to appear in the following fashion. Assuming that $n>m$ and that $y\in\mathrm{range}(g)$ is a regular value of $g$, the minimisers $g^{-1}\{y\}$ of $\ell$ are an $(n-m)$-dimensional smooth manifold along which $\nabla^2\ell = Dg^TDg$. Denoting $\lambda_i:=\lambda_i(\nabla^2\ell)$ for the eigenvalue fields, $\lambda_1$ is locally Lipschitz everywhere and smooth outside of the closed set where $\lambda_1 = \lambda_2$. Consequently, the lift
\begin{equation}\label{eq:T}
T:=\{(x,2/\lambda_1(x)):x\in g^{-1}\{y\}\}
\end{equation}
of $g^{-1}\{y\}$ by $2/\lambda_1$ is a Lipschitz manifold which is smooth outside of a closed subset $\widetilde{T}$. The Lipschitz manifold $T$ is moreover a manifold of fixed points for the lift
\begin{equation}\label{eq:f}
f:\RB^n\times\RB\ni(x,\eta)\mapsto (f_{\eta}(x),\eta)\in\RB^n\times\RB
\end{equation}
of $f_{\eta}$. Outside of $\widetilde{T}$, the normal bundle $\nu T$ of $T$ is preserved by $Df$ and admits the splitting $\nu T|_{T\setminus\widetilde{T}} = E_c\oplus E_s$, with $E_c$ of rank 2 and $E_s$ of rank $n-m-1$. With respect to this splitting, $Df$ takes the form
\[
Df = \begin{pmatrix} Df|_{E_c} & 0\\ 0 & Df|_{E_s}\end{pmatrix},
\]
where $Df|_{E_c}$ has eigenvalues $\pm 1$ and $Df|_{E_s}$ has eigenvalues lying in the interval $(-1,1)$. One might therefore expect centre manifolds along $T\setminus\widetilde{T}$; however, even the simplest deep neural network problems\footnote{Namely two-layer matrix factorisation with $2\times 2$-matrices; see Section \ref{sec:deeplearning}} have $f$ being not even locally diffeomorphic and $\widetilde{T}$ being nonempty, making existing results such as \cite{bonatti_crovisier} inapplicable out-of-the-box.

The payoff for an appropriate centre manifold theorem, however, is already clear from the $m=1$ case, wherein the problem is greatly simplified. As shown in \cite{macdonaldeos}, in that case $\widetilde{T} =\emptyset$ and there is an obvious centre manifold given by a tubular neighbourhood of $T$; writing $f$ in the coordinates associated with this centre manifold makes apparent a coupled dynamics of a 1-dimensional bifurcation orthogonal to $T$ depending on the value of the stability parameter $\eta \lambda_{1}(\nabla^2\ell)$ and an $(n-1)$-dimensional Riemannian gradient descent on $\lambda:=\lambda_{1}(\nabla^2\ell)$ along $T$, enabling the proof of sharp convergence rates to some of the bifurcating attractors observed empirically in prior work \cite{chen2023eos,kreislereos,kalraeos,ghosh2025eos}. For $m>1$, the existence of such a centre manifold in this setting requires proof; it is the goal of this paper to provide such a proof and thus pave the way for a general theory of non-convex optimisation with large step size.

\vspace{3mm}

\noindent\textbf{Summary of main results:} Our first main result is a centre-unstable manifold theorem for a map admitting a compact manifold-with-boundary of fixed points. Specifically, given a compact submanifold $S$ of a Riemannian manifold $M$, $\nu S$ will denote the normal bundle of $S$ and, for $r>0$, $\nu S(r)\subset\nu S$ will denote the radius $r$ open ball bundle centred on the zero section. Since $S$ is compact, there is a largest possible $r_S>0$ (the reach) such that the exponential map $\exp:\nu S(r_S)\rightarrow M$ is a diffeomorphism onto its image; for any $r\leq r_S$, $\tau S(r):=\exp(\nu S(r))$ will denote the corresponding tubular neighbourhood, a fibre bundle over $S$ whose restriction to any subset $X$ we denote by $\tau S(r)|_X$. Given a matrix $A$, $\sigma_{\min}(A)$ and $\sigma_{\max}(A)$ will denote its smallest and largest singular values respectively. See Section \ref{sec:notation} for a more complete summary of notation. The main theorem of this paper is the following.

\begin{theorem}\label{thm:invariantfoliation}
    Let $M$ be a complete, $C^{\infty}$ Riemannian manifold without boundary and let $k\in\NB\setminus\{0,1\}$. Let $f:M\rightarrow M$ be a $C^k$ map admitting a $C^{\infty}$-embedded compact submanifold $S\subset M$ of fixed points, which is closed as a subset of $M$. Assume that:
    \begin{enumerate}
        \item If $S$ has a boundary $\partial S$, then there is $r>0$ such that $f(\tau S(r)|_{\partial S})\subset \tau S(r_S)|_{\partial S}$.
        \item The $C^{\infty}$ normal bundle $\nu S\subset TM$ is $Df$-invariant.
        \item $\nu S$ admits a $C^{\infty}$, $Df$-invariant splitting $\nu S = E_u\oplus E_c\oplus E_s$, such that $Df|_{E_c}$ is an isometry, $\sigma_{\min}(Df|_{E_u})>1$ and there is a positive constant $\kappa<1$ such that $\sigma_{\max}(Df|_{E_s})\leq\kappa$, uniformly over $S$.
    \end{enumerate}
    Then for any $k\in\NB$, there exists an $f$-invariant, $C^{k}$-embedded submanifold $W^{cu}$ of $M$ containing $S$ and tangent at $S$ to $TS\oplus E_u\oplus E_c$, which is $C^{k}$-foliated by leaves tangent at $S$ to $E_u\oplus E_c$.
\end{theorem}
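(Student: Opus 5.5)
We construct $W^{cu}$ fibrewise over $S$ by a Hadamard graph transform in the tubular neighbourhood, and take the leaves of the foliation to be the fibrewise pieces.

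\textbf{Step 1: local model.} Using $\exp:\nu S(r_S)\rightarrow\tau S(r_S)$, we write points near $S$ as $(p,v)$ with $p\in S$ and $v=(v_u,v_c,v_s)\in E_u\oplus E_c\oplus E_s$ over $p$. In these coordinates, $f(p,v)=(p+\phi(p,v),\,A_pv+R(p,v))$. Here $A_p=Df|_{\nu_pS}$ is block-diagonal by hypotheses 2 and 3. Since $S$ consists of fixed points, $\phi(p,0)=0$ and $R(p,0)=0$, and since $\nu S$ is $Df$-invariant, $D_v\phi(p,0)=0$. The maps $\phi$ and $R$ are $C^k$ with $D\phi$ and $DR$ of size $O(|v|)$.

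We multiply the nonlinearities by a cutoff $\chi(|v|/\delta)$ to get a globally defined modification $\tilde f$ on $\nu S$. This $\tilde f$ agrees with $f$ on $\nu S(\delta)$, and its nonlinear part has Lipschitz constant $\varepsilon(\delta)\rightarrow0$, uniformly over the compact $S$. If $\partial S\neq\emptyset$, hypothesis 1 guarantees that the base component does not push points out through $\partial S$. We also extend the base direction slightly past $\partial S$, or equivalently treat $S$ as inflowing-invariant.

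\textbf{Step 2: graph transform.} Set $X=TS$ directions together with $E_u\oplus E_c$, so $X$ consists of the base point $p$ and $v_{uc}$. Consider the space $\mathcal{G}$ of sections $h:\{(p,v_{uc})\}\rightarrow E_s$ with $h(p,0)=0$ and $\Lip(h)\leq1$. Define $\Gamma h$ by $\mathrm{graph}(\Gamma h)=\tilde f(\mathrm{graph}\,h)$.

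To see that $\Gamma$ is well defined, we show that $(p,v_{uc})\mapsto$ the $X$-component of $\tilde f(p,v_{uc},h(p,v_{uc}))$ is a bi-Lipschitz homeomorphism. On the $E_u\oplus E_c$ part this map is a small perturbation of $A_p|_{E_u\oplus E_c}$, which is expanding or isometric. On the base it is a small perturbation of the identity. So invertibility follows from a Lipschitz inverse function theorem. This is where non-invertibility of $f$ is harmless: only invertibility of $f$ restricted to graphs over the centre-unstable directions is needed, and that holds near $S$.

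The stable block has $\|A|_{E_s}\|\leq\kappa<1$, while the other directions have conorm at least $1$ and the base has conorm at least $1-\varepsilon$. This spectral gap makes $\Gamma$ a contraction in the $C^0$ metric on $\mathcal{G}$, with rate about $\kappa+O(\varepsilon)$. The fixed point $h^*$ gives an invariant Lipschitz graph, and $W^{cu}$ is $\mathrm{graph}(h^*)$ restricted to $\nu S(\delta)$. It contains $S$ because $h^*(p,0)=0$.

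\textbf{Step 3: regularity and tangency.} We apply the fibre contraction theorem (Hirsch--Pugh--Shub) to the pair consisting of $h$ and candidate derivative fields $H$. This shows $h^*$ is $C^1$, and then $C^j$ inductively up to $k$. The contraction rate at order $j$ is about $\kappa\cdot(\text{conorm on } X)^{-j}$, which is at most $\kappa(1-\varepsilon)^{-j}<1$ for small $\delta$. This is why the isometric centre, as opposed to merely subexponential behaviour, is important.

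Tangency follows because $Dh^*(p,0)$ is the fixed point of the linearised transform $H\mapsto A_sHA_{uc}^{-1}$, and that fixed point is $0$.

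\textbf{Step 4: foliation.} For each $p$, we seek a leaf $\mathcal{L}_p=\{(\pi(p,w),w,h^*(\pi(p,w),w))\}$, given by a base-point function $\pi_p(w)$ with $\pi_p(0)=p$. The leaves must satisfy $\tilde f(\mathcal{L}_p)\subset\mathcal{L}_{p}$, which holds since points of $S$ are fixed.

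Equivalently, we look for an invariant family of graphs over $E_u\oplus E_c$ into the base direction. We find it by a second graph transform on functions $\pi$. The contraction in the appropriate direction comes from the fact that $D\phi=O(|v|)$ is small, while the $E_{uc}$ directions are non-contracting. Concretely, we pull back along $\tilde f^{-1}|_{W^{cu}}$ and compare base displacement with $uc$-growth, which gives a contraction of rate about $\varepsilon$. Smoothness of the foliation then comes from fibre contraction again.

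\textbf{Main obstacle.} I expect the main obstacle to be Step 4 together with the boundary case. The base direction is neutral with rate about $1$, so the foliation is only marginally normally hyperbolic there. One must use the fact that $S$ consists of fixed points, and not just invariant points, to get the smallness $O(\delta)$ of the base drift. One must also verify that this smallness beats the merely isometric centre at every derivative order up to $k$. Near $\partial S$, hypothesis 1 has to be exploited to keep leaves from escaping.
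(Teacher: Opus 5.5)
Your Steps 1--3 follow the paper's route closely. The paper also cuts off $f$ near $S$, runs a sup-norm contraction on $1$-Lipschitz sections vanishing on the zero section, and gets tangency from $D_v\sigma(p,0)=A_s\,D_v\sigma(p,0)\,A_{cu}^{-1}$. Using fibre contraction instead of the paper's closedness of bounded $C^{k,1}$ sets in $C^0$ is a legitimate substitute.

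The genuine problem is Step 4. You impose $\tilde f(\mathcal L_p)\subset\mathcal L_p$, which is indeed what an invariant foliation must satisfy when every $p\in S$ is fixed. You then claim a second graph transform produces such leaves with contraction rate about $\varepsilon$. Such leaves do not exist in general. Take $M=S^1\times\RB^2$, $S=S^1\times\{0\}$ and $f(\theta,v,w)=(\theta+v^2,\,v,\,w/2)$. Every hypothesis holds, with $E_u=0$, $E_c=\mathrm{span}\,\partial_v$, $E_s=\mathrm{span}\,\partial_w$ and $\kappa=1/2$. Near $S$, a leaf through $(\theta_0,0,0)$ tangent to $E_c$ is a curve $\{(\pi(v),v,h(\pi(v),v))\}$. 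Invariance would force $\pi(v)+v^2\equiv\pi(v)$, which fails for small $v\neq0$.

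The smallness $D\phi=O(|v|)$ does not rescue this. The drift per step is small, but $E_c$ is isometric, so $|v|$ never decays and the drift accumulates. The rate governing your transform on $\pi$ is the base rate times the inverse $E_c$ rate, roughly $(1\pm\varepsilon)\cdot 1$, not $\varepsilon$. At best, pulling back along $\tilde f^{-1}|_{W^{cu}}$ gives invariant leaves tangent to $E_u$ alone (Fenichel-type fibres), since only there does expansion beat the neutral base.

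The fix is to drop invariance, which the theorem never asks for. It only requires a $C^k$ foliation by leaves tangent at $S$ to $E_u\oplus E_c$. The paper takes the leaves to be $e_S\circ\tsig(E_{cu,p}(r))$ for $p\in S$, i.e.\ the images of the fibres of $E_{cu}\to S$ under the $C^k$ embedding $e_S\circ\tsig$. Local trivialisations of $E_{cu}$ give the foliation charts, and $D\sigma(p,0)=0$ gives the tangency.

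A secondary point concerns the boundary. Extending past $\partial S$ is not available, because $f$ need not fix anything beyond $S$. Inflowing invariance is the wrong direction: the $cu$-map would then fail to cover its domain near $\partial S$, so $\Gamma h$ would be undefined there. What hypothesis 1 gives is that fibres over $\partial S$ map to fibres over $\partial S$. Hence $f^r_{cu}\circ\tsig$ is a boundary-preserving, $C^{0,1}$-small perturbation of $Df|_{E_{cu}}$. Openness of Lipschitz diffeomorphisms in $C^{0,1}_\partial$ then makes it a global bi-Lipschitz homeomorphism whose image contains $E_{cu}(4r)$. Your Step 2 needs this global statement, not merely a local Lipschitz inverse function theorem.
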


It is not immediately clear how Theorem \ref{thm:invariantfoliation} applies to deep neural network problems of the kind described above. Our second main result derives such an application via a surgery argument; its statement is as follows.

\begin{theorem}\label{thm:application}
    Let $T$, $f$ be as in \eqref{eq:T} and \eqref{eq:f} respectively. Then about any point $x\in T\setminus\widetilde{T}$, there is an open neighbourhood $U\subset T\setminus\widetilde{T}$ of $x$ and a $C^k$ centre manifold $W^{c}_U$ containing $U$, tangent at $U$ to $TU\oplus E_c|_U$ and $C^k$-foliated by leaves tangent at $U$ to $E_c|_U$.
\end{theorem}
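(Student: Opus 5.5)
We derive Theorem \ref{thm:application} from Theorem \ref{thm:invariantfoliation} by surgery: we cut out a small compact piece of $T$ away from $\widetilde{T}$ and modify the ambient data near it so that all three hypotheses hold. In this setting $E_u=0$, so the centre-unstable manifold it produces is a genuine centre manifold.

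\emph{Step 1 (localisation).} Fix $x\in T\setminus\widetilde{T}$. Since $\widetilde{T}$ is closed, we may choose a closed geodesic ball $S$ in $T$ about $x$ with $S\cap\widetilde{T}=\emptyset$; take its interior as $U$. On a neighbourhood of $S$ in $T$ the manifold $T$ is smooth: $\lambda_1$ is simple there, hence $C^\infty$ as a function of the smooth matrix field $Dg^TDg$. Thus $S$ is a compact $C^\infty$-embedded submanifold with boundary, closed in $M:=\RB^n\times\RB$, and consists of fixed points of $f$.

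\emph{Step 2 (normal splitting and Riemannian metric).} Along $T\setminus\widetilde{T}$ we use the splitting $\nu T=E_c\oplus E_s$ described in the introduction. The rank-two bundle $E_c$ is spanned by the top eigenvector $v_1$ of $\nabla^2\ell$ and a direction in the $\eta$-variable, suitably corrected so that it is normal to $T$. On it $Df$ has eigenvalues $\pm1$, but it is in general only a Jordan-type or non-orthogonal matrix. To get $Df|_{E_c}$ to be an isometry, we replace the Euclidean metric near $S$ by a $C^\infty$ metric for which:
\begin{itemize}
\item $TS$, $E_c$ and $E_s$ are mutually orthogonal, so that $\nu S=E_c\oplus E_s$ is the normal bundle;
\item on $E_c$ the metric is $Df$-invariant;
\item on $E_s$ it is an adapted metric giving $\sigma_{\max}(Df|_{E_s})\le\kappa<1$.
\end{itemize}
The adapted metric on $E_s$ comes from the standard Lyapunov construction, using that the eigenvalues lie in $(-1,1)$ uniformly on the compact set $S$. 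We then extend this metric to a complete metric on $M$ via a partition of unity. Invariance of $\nu S$ and of the splitting is then hypotheses 2 and 3.

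\emph{Step 3 (boundary condition and conclusion).} Hypothesis 1 asks that points near $\partial S$ be mapped into the tube over $\partial S$. The $\eta$-coordinate is preserved by $f$, but motion along $T$ inside $\tau S(r)$ occurs at second order. So we modify $f$ by a cutoff: we take a $C^k$ map $\tilde f$ agreeing with $f$ on $\tau S(r)|_{S'}$ for a slightly smaller ball $S'$. Near $\partial S$, $\tilde f$ is the composition of $f$ with the fibrewise projection, in the tubular coordinates, onto the fibre over the base point of the input.

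This preserves the fixed points and $D\tilde f$ along $S$, since the correction is quadratic in the normal coordinate, so hypotheses 1--3 hold for $\tilde f$. Theorem \ref{thm:invariantfoliation} then gives $W^{cu}$ for $\tilde f$. Restricting to the part over $S'$ and renaming $U:=\mathrm{int}\,S'$ yields $W^c_U$, tangent to $TU\oplus E_c|_U$ and foliated by leaves tangent to $E_c|_U$. Since $\tilde f=f$ there, it is locally $f$-invariant.

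\emph{Main obstacle.} The delicate point is Step 3: keeping $D\tilde f|_S=Df|_S$ so that hypotheses 2--3 survive the surgery, while keeping $\tilde f$ globally defined and $C^k$. A secondary difficulty is Step 2. Making $Df|_{E_c}$ an isometry requires $Df|_{E_c}$ to be diagonalisable with eigenvalues $\pm1$, not a nontrivial Jordan block, and this has to be checked from the explicit form of $Df$ in the $\eta$-direction. If it fails, I would instead verify that the proof of Theorem \ref{thm:invariantfoliation} only needs $Df|_{E_c}$ to be uniformly bounded with bounded inverse on compact $S$.
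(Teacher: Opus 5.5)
Your proposal is correct and is in substance the paper's argument. You localise to a compact piece of $T\setminus\widetilde{T}$ and modify $f$ only near its boundary so that normal fibres over the boundary are mapped into themselves; the paper does this by freezing the base components in a local orthonormal normal frame, which amounts to your fibrewise projection. You then apply Theorem~\ref{thm:invariantfoliation} with $E_u=0$. Your observation that the correction is $O(|w|^2)$, so the $1$-jet of $f$ along $S$ is untouched, is a cleaner way to see that hypotheses 2--3 survive the patching than the paper's discussion of convex combinations of symmetric matrices.

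Your Step~2 is unnecessary, and the worry behind it is unfounded. Since $\nabla\ell=0$ on $g^{-1}\{y\}$, along $T$ one has $Df=\mathrm{diag}(I-\eta\nabla^2\ell,1)$, which is symmetric for the Euclidean metric. Hence $TT$, $E_c=\mathrm{span}\{(v_1,0),(-\beta,1)\}$ (with $\beta$ the gradient of $2/\lambda_1$ along $g^{-1}\{y\}$) and $E_s=\mathrm{span}\{(v_i,0)\}_{i\geq 2}$ are already mutually orthogonal. On $E_c$, $Df$ is the reflection with eigenvalues $-1$ and $+1$; since these are distinct, a Jordan block could never have occurred. Also $\sigma_{\max}(Df|_{E_s})=\max_{i\geq 2}|1-2\lambda_i/\lambda_1|$ is bounded by some $\kappa<1$ on the compact set $S$, which is why the paper simply keeps the Euclidean metric. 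Your fallback of mere bounded invertibility on $E_c$ would not have sufficed anyway, because the graph transform uses $\sigma_{\min}(Df|_{E_{cu}})\geq 1$.
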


\vspace{3mm}

\noindent\textbf{Paper outline:} The proof of Theorem \ref{thm:invariantfoliation} occupies the majority of the paper; Sections \ref{sec:setup}, \ref{sec:existence} and \ref{sec:regularity}. The proof of Theorem \ref{thm:application}, as well as further discussion of a simple example from deep learning, occupies Section \ref{sec:deeplearning}.

Our proof of Theorem \ref{thm:invariantfoliation} factors through Hadamard's famous graph transform method \cite{hadamard}, and as such results from the solution of a fixed point problem in a function space. The fixed point problem is set up in Section \ref{sec:setup}; our definition of the map defining the graph transform is distinct from that which one sees in the single-fixed-point case due to the potential non-triviality of the normal bundle. The fixed point problem is solved, producing a \emph{Lipschitz} invariant manifold, in Section \ref{sec:existence}, using more-or-less standard arguments; the primary deviation is that our proof requires our function space to consist of only locally-defined functions to bound variation along the fixed point manifold in a manner that is not necessary in the classical case, where it is standard to instead work with a function space of globally-defined functions \cite[Theorem 5A.1]{hpsinvariant}. Regularity of the invariant manifold is proved via an extension of the elegant techniques of \cite{center_regularity} in Section \ref{sec:regularity}.

The application of Theorem \ref{thm:invariantfoliation} to produce centre manifolds for deep learning problems of the form \eqref{eq:T}, \eqref{eq:f} is described in Section \ref{sec:deeplearning}. Section \ref{sec:deeplearning} also details a simple matrix factorisation example and overview of its pathological properties which make it poorly suited to analysis using existing theorems. 

The appendices contain technical material needed for the results in the main body of the paper. In particular, Appendix \ref{sec:clarke} uses techniques from the Clarke calculus for locally Lipschitz functions \cite{clarke} to prove the openness of a number of function spaces including locally Lipschitz immersions, submersions, embeddings and homeomorphisms when equipped with a suitable strong Whitney type topology. Appendix \ref{sec:clarke} thus says that from a differential topological perspective, the locally Lipschitz functions are just as nice as $C^1$ functions. To our knowledge these results do not yet exist in the literature and may be of independent interest.

\section{Notation}\label{sec:notation}

Given any vector bundle $V\rightarrow X$, without loss of generality identify $V$ with the disjoint union $\bigsqcup_{x\in X}V_x$. Denote by $X_{0,V}\subset V$ the zero-section. Assuming the fibres of $V$ to admit a Euclidean metric $\{\langle\cdot,\cdot\rangle_x\}_{x\in X}$ inducing norms $\{|\cdot|_{x}\}_{x\in X}$, given $r>0$ denote
\begin{align}
    V(r):=\{(x,v)\in V: |v|_x<r\},
\end{align}
for the $r$-tube around the zero section. In particular, if $V$ is a Euclidean vector space and $r>0$, $V(r)$ denotes the open ball of radius $r$ centred at zero.

Manifolds will all be assumed to be connected and without corners unless otherwise stated. The tangent bundle of a manifold $M$ will be denoted $TM$, and its boundary will be denoted $\partial M$. If $M$ is a Riemannian manifold, its exponential map will be denoted $\exp^M$, or simply $\exp$ if this is clear from context. The geodesic distance on a Riemannian manifold $M$ will be denoted $d_M:M\times M\rightarrow[0,\infty)$. Recall that a subset $X$ of a Riemannian manifold $M$ is said to be \emph{geodesically convex} if any two points in $X$ are connected by a unique, length-minimising geodesic in $M$ contained entirely in $X$. The \emph{convexity radius} of $M$ is the largest non-negative number $c_M$ such that every geodesic ball $B_M(x,c_M)$ is geodesically convex. The convexity radius of any \emph{compact} Riemannian manifold (with or without corners) is strictly positive.

If $M$ is a Riemannian manifold and $S\subset M$ is a submanifold, the normal bundle of $S$ will be denoted $\nu S$. Recall that $S$ is said to have \emph{positive reach} if there is $r>0$ such that $\exp^M|_{\nu S(r)}$ is a diffeomorphism onto its image; this being the case, the \emph{reach} $r_S$ of $S$ is the largest $r$ for which this is true. If $S$ is compact, then it necessarily has positive reach.

If $S$ has positive reach, then for any $r\leq r_S$, the set $\exp^M(\nu S(r))\subset M$ will be denoted $\tau S(r)$ and is called the (open) \emph{tubular neighbourhood of radius $r$ about $S$}. We will also denote $e_S:\nu S(r_S)\rightarrow \tau S(r_S)$ for the diffeomorphism induced by $\exp^M$. Note that a tubular neighbourhood of $S$ is a neighbourhood in the topological sense of containing an open set containing $S$ only if $\partial S\subset\partial M$, however we will use the terminology ``tubular neighbourhood'' irrespective of this fact. Any tubular neighbourhood $\tau S(r)$ of $S$ is a fibre bundle over $S$, with fibre $\tau_xS(r)$ over $x\in S$ being the image of $\nu_xS(r)$ under $e_S$.

If $M$ and $N$ are Riemannian manifolds and $f:M\rightarrow N$ is a $C^1$ map, then $Df$ is a section of the bundle $\mathrm{Lin}(TM,f^*TN)\rightarrow M$ whose fibre over $x\in M$ is the vector space of linear maps $T_xM\rightarrow T_{f(x)}N$. Note that each fibre $\mathrm{Lin}(T_xM,T_{f(x)}N)$ admits the operator norm
\[
|A|_{x,\op}:=\sup_{v\in \overline{T_xM(1)}}|A[v]|_{f(x)}.
\]
The section $Df$ of $\mathrm{Lin}(TM,f^*TN)$ is \emph{bounded} if its supremum norm
\[
\|Df\|_{\op}:=\sup_{x\in M}|Df(x)|_{x,\op}.
\]
is finite. If $f$ is $C^k$ with $k\geq 2$, then for any $1\leq j\leq k$, the derivative $D^jf$ may be defined using the Levi-Civita connection on $N$ as a section of the bundle $\mathrm{Sym}^j(TM,f^*TN)$, whose fibre over $x\in M$ is the space $\mathrm{Sym}^j(T_xM,T_{f(x)}N)$ of symmetric, $j$-multilinear maps $T_xM\rightarrow T_{f(x)}N$ with the norm
\[
|A|_{x,j,\op}:=\sup_{v_1,\dots,v_j\in\overline{T_xM(1)}}|A[v_1,\dots,v_k]|_{f(x)}.
\]
The section $D^jf$ of $\mathrm{Sym}^j(TM,f^*TN)$ is \emph{bounded} if its supremum norm
\[
\|D^jf\|_{\op}:=\sup_{x\in M}|D^jf(x)|_{x,j,\op}
\]
is finite.




\section{Setup}\label{sec:setup}

In this section, we adopt the notation of Theorem \ref{thm:application} in providing the groundwork for the proof of the theorem. The theorem is proved by solving a fixed point problem in a space of locally-defined sections of the bundle $\nu S\rightarrow E_c\oplus E_u$, in a manner similar to \cite[Theorem 4.1]{hpsinvariant}. To define the fixed point problem therefore requires moving the relevant objects from $M$ to $\nu S$. The following lemma is the first step in setting up the fixed point problem.

\begin{lemma}\label{lem:exp}
    Recall the diffeomorphism $e_S:\nu S(r_S)\rightarrow \tau S(r_S)$ induced by the exponential map. There exists $r_0>0$ such that $e_S^{-1}\circ f\circ e_S:\nu S(r)\rightarrow\nu S(r_S)$ is well-defined for all positive $r\leq r_0$.
\end{lemma}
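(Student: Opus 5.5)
The claim amounts to showing that $f$ maps the thin tube $\tau S(r)$ into the larger tube $\tau S(r_S)$ once $r$ is small enough. Then $e_S^{-1}\circ f\circ e_S$ is defined on $\nu S(r)$, since $e_S$ maps $\nu S(r)$ into $\tau S(r)\subset\tau S(r_S)$ and $e_S^{-1}$ is defined on $\tau S(r_S)$. I would reduce this to a compactness argument using that $S$ consists of fixed points and that $f$ is continuous.

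\textbf{Interior (or boundaryless) points.} Let $x\in S$ with $x\notin\partial S$. Then $f(x)=x$, and $x$ lies in the interior of the set $\tau S(r_S)$ in $M$: near $x$, the image of $\nu S(r_S)$ under the diffeomorphism $e_S$ contains a genuine open neighbourhood of $x$. By continuity of $f$ there is an open $U_x\ni x$ with $f(U_x)\subset\tau S(r_S)$.

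\textbf{Boundary points.} Near $\partial S$ the tube is not a neighbourhood in $M$. For $x\in\partial S$, take $r$ from hypothesis~1. Then every point of $\tau S(r)|_{\partial S}$ is mapped into $\tau S(r_S)|_{\partial S}\subset\tau S(r_S)$. Points of the tube over interior points of $S$ close to $\partial S$ still need handling. I would cover a collar of $\partial S$ by fibres over interior points and use interior neighbourhoods there. Continuity of $f$ at points of $\tau_xS(r)$ with $x\in\partial S$ together with openness is not enough, because $\tau S(r_S)$ is not open near $\partial S$. So this step is the main obstacle.

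The approach I would try first is to show that, in the relative topology of the closed set $\overline{\tau S(r_S/2)}$, the preimage $f^{-1}(\tau S(r_S))\cap\tau S(r_S/2)$ is a relatively open set containing $S$. For this I would use hypothesis~1 along $\partial S$ and the openness of the tube near interior fibres. Alternatively, I would work in the closed manifold-with-boundary $S$ and use the continuous function $(x,v)\mapsto e_S^{-1}$-distance, appealing to hypothesis~1 to keep boundary fibres from leaving. I will need to be careful here, and I may simply have to use hypothesis~1 with the given $r$ as the boundary radius.

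\textbf{Uniform radius.} Define the function $\rho(x)=\sup\{s\le r_S: f(\tau_yS(s))\subset\tau S(r_S)\text{ for all } y \text{ near } x\}$. The previous steps give $\rho(x)>0$ for every $x$, with the boundary case supplied by hypothesis~1. Cover the compact $S$ by finitely many of the neighbourhoods produced above and let $r_0$ be the minimum of the finitely many radii, and of $r$ from hypothesis~1 if $\partial S\neq\emptyset$. Monotonicity in $r$ then gives the statement for all $r\le r_0$.
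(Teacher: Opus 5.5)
Your argument for fibres over a compact subset of the interior of $S$ is fine. It corresponds to the paper's treatment of $S':=S\setminus\Gamma$, with the paper's bound $d_M(f(e_S(x,v)),x)\le C_1|v|_x$ replaced by continuity and compactness.

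The boundary step, which you flag yourself, is a genuine gap, and the route you sketch for it cannot work. Hypothesis 1 only constrains fibres lying exactly over $\partial S$. The problematic fibres are those over interior points $y$ close to $\partial S$. Their images lie near the face $\tau S(r_S)|_{\partial S}$, where $\tau S(r_S)$ is not open in $M$. So continuity only gives admissible radii that may shrink to zero as $y\to\partial S$. Consequently $\rho(x)>0$ for $x\in\partial S$, which your finite subcover needs, is unproved. So is the relative openness of $f^{-1}(\tau S(r_S))\cap\tau S(r_S/2)$.

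In fact your argument uses only continuity of $f$, $f|_S=\id$ and hypothesis 1, and these do not imply the lemma. Take $S=[0,1]\times\{0\}\subset\RB^2$ and $f(x,y)=(x-\sqrt{|x|}\,|y|,\,y/2)$ near the origin. The fibre over $(0,0)$ is mapped into itself and $S$ is fixed. Yet for every $y\neq 0$ the point $(y^2/4,y)$ is sent to a point with first coordinate $-y^2/4<0$, outside every tube about $S$.

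The missing ingredient is a first-order estimate, and this is where differentiability of $f$ has to enter. The paper fixes an inward collar $\gamma:\partial S\times[0,R]\to S$. It then considers the collar coordinate $t'(x,t,v)$ of the base point of $f(e_S(\gamma(x,t),v))$.

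One has $t'(x,0,v)=0$ by hypothesis 1, and $t'(x,t,0)=t$ because $S$ is fixed. Hence $t'-t=t\int_0^1(\partial_t t'(x,st,v)-1)\,ds$, and the integrand vanishes at $v=0$. Compactness then gives $|t'-t|\le C_2t|v|$, or at least $t\cdot o(1)$ as $|v|\to 0$ if $f$ is only $C^1$. So $t'\ge t/2\ge 0$ for $|v|$ small: images of fibres over $\Gamma=\gamma(\partial S\times[0,R/2])$ stay on the correct side of $\partial S$.

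To give $t'$ a meaning before well-definedness is known, extend $S$ slightly past $\partial S$, or use coordinates of $M$ adapted to the collar, so that $t'$ may a priori be negative. Combining this with your continuity argument on the compact set $\overline{S\setminus\Gamma}$, which lies in the interior of $S$, yields $r_0$.
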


\begin{proof}
    Since $S$ is compact and consists of fixed points for $F$, there exists $C_1>0$ such that $d_M(f(e_S(x,v)),x)\leq C_1|v|_x$ for all $(x,v)\in \nu S(r_S)$.  If $S$ does not have boundary, this alone suffices. Indeed, one sees that for any positive $r\leq r_S/C_1$, one has $d_M(f(e_S(x,v)),x)\leq r_S$ for all $(x,v)\in \nu S(r)$, implying that $f(\tau S(r))\subset \tau S(r_S)$.

    If $S$ has boundary, additional argument is required, relying on the assumption that $f$ maps each of the normal fibres over $\partial S$ to itself (without which $f$ could ``overflow" the boundary of $S$ and the result would be false). To set up this argument, fix $R>0$ sufficiently small to enable the definition of a collar $\gamma:\partial S\times[0,R]\rightarrow S$ for $S$ such that for any $x\in S$ the map $[0,R]\ni t\mapsto\gamma(x,t)$ is the inward-pointing, unit-speed geodesic in $S$ through $x$. Then  $d_S(\gamma(x,0),\gamma(x,R)) = R\geq d_M(\gamma(x,0),\gamma(x,R))$ for all $x\in\partial S$. Note also that, by hypothesis, there is $r'>0$ such that $f(\tau_xS(r'))\subset \tau_xS(r_S)$ for all $x\in\partial S$; consequently, letting $\pi:\gamma^*\nu S\rightarrow[0,R]$ be the projection, there is $C_2>0$ such that $|\pi\circ e_S^{-1}\circ f\circ e_S(\gamma(x,t),v)-t|\leq C_2t|v|_{\gamma(x,t)}$ for all $v\in\nu_{\gamma(x,t)}S(r')$.

    Now, set $\Gamma:=\gamma(\partial S\times[0,(1/2)R])$ and $S':=S\setminus\Gamma$. Fix any positive $r\leq \min\{(1/2)R,r',r_S\}/\max\{C_1,2C_2\}$. Then:
    \begin{enumerate}
        \item $d_M(f(e_S(x,v)),x)\leq \min\{(1/2)R,r_S\}$ for all $(x,v)\in \nu S'(r)$, implying that $f(\tau S'(r))\subset \tau S(r_S)$.
        \item $|\pi\circ e_S^{-1}\circ f\circ e_S(\gamma(x,t),v)-t|\leq (1/2)t$ for all $(\gamma(x,t),v)\in \nu\Gamma(r)$, implying that $f(\tau\Gamma(r))\subset\tau\Gamma(r_S)$.
    \end{enumerate}
    Since $S = S'\cup \Gamma$, this gives $f(\tau S(r))\subset \tau S(r_S)$ as claimed.
\end{proof}

Lemma \ref{lem:exp} gives a \emph{locally-defined} map
\[
    f^{e_S}:=e_S^{-1}\circ f\circ e_S:\nu S(r_0)\rightarrow \nu S(r_S)
\]
We now extend $f^{e_S}$ to a \emph{globally-defined} map $\nu S\rightarrow \nu S$ as follows.  Denote by $f^{e_S}_S:\nu S(r_0)\rightarrow S$ the composite of $f^{e_S}$ with the projection $\nu S\rightarrow S$. Recall that the convexity radius $c_S>0$ of $S$ has every geodesic ball of radius $c_S$ in $S$ being geodesically convex; by the compactness of $S$, there exists $r_1>0$ such that $d_S(x,f^{e_S}_S(x,v))\leq c_S$ for all $(x,v)\in\nu S(r_1)$. Set $r_{\max}:=(1/4)\min\{r_0,r_1\}$.

Fix a $C^{\infty}$ bump function $\phi:\RB\rightarrow[0,1]$ with the properties that $\phi|_{[0,1]}\equiv 1$, $\phi|_{[0,\infty)\setminus[0,2)}\equiv 0$ and $\|D\phi\|_{\infty},\|D^2\phi\|_{\infty}<\infty$\footnote{For example, \begin{align*}
    \phi(t) = \frac{h\big(2-t\big)}{h\big(2-t\big) + h\big(t-1\big)},\qquad \text{where }h(t):=\begin{cases} \exp(-1/t)&\text{ if $t>0$}\\0&\text{ if $t\leq 0$}\end{cases}.
\end{align*}}. Given $r>0$ define \emph{tubular} bump functions $\phi_i^r:\nu S\rightarrow[0,\infty)$, $i=1,2$, by
\[
    \phi_i^r(x,v):=\phi\bigg(\frac{|v|_x}{ir}\bigg),\qquad (x,v)\in\nu S,\,i=1,2.
\]
Note then that the $\phi_i^r$ are $C^{\infty}$, and admit the properties
\begin{align}
    \phi_i^r|_{\nu S(ir)}\equiv 1,\qquad\phi_i^r|_{\nu S\setminus \nu S(2ir)}\equiv0.
\end{align}
Finally, for any positive $r\leq r_{\max}$, define $f^{r}:\nu S\rightarrow \nu S$ by
\begin{align}
    f^{r}(x,v):=\phi_1^r(x,v)f^{e_S}\big(x,\phi_2^r(x,v)v\big) + (1-\phi_1^r(x,v))P\big(x,f^{e_S}_S(x,\phi_2^r(x,v)v)\big)Df(x)[v]
\end{align}
where for $y\in B_S(x,c_S)$, $P(x,y)$ denotes parallel transport in $\nu S$ along the unique geodesic from $x$ to $y$. Then $f^{r}:\nu S\rightarrow \nu S$ is well-defined, $C^{\infty}$, and satisfies
\begin{align}
    f^r|_{\nu S(r)} \equiv f^{e_S}|_{\nu S(r)},\qquad f^r|_{\nu S\setminus \nu S(4r)}\equiv Df|_{\nu S\setminus \nu S(4r)}.
\end{align}
The following result says that $f^r$ can be made as close as desired to $Df$ for all $r$ sufficiently small.

\begin{proposition}\label{prop:frcontinuous}
    Define $f^0:=Df$. Then $[0,r_{\max}]\ni r\mapsto f^r\in C^{\infty}(\nu S,\nu S)$ is continuous at $r=0$ with respect to the strong $C^1$ topology on $C^{\infty}(\nu S,\nu S)$.
\end{proposition}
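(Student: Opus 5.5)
The plan is to reduce the claim to uniform $C^1$ estimates on a shrinking compact tube, and then to control each term in the definition of $f^r$ by Taylor expansion about the zero section.

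\textbf{Reduction.} By construction $f^r\equiv Df$ on $\nu S\setminus\nu S(4r)$. Hence for $r\le r_{\max}$ the difference $f^r-Df$ is supported in the fixed compact set $\overline{\nu S(4r_{\max})}$. A strong (Whitney) $C^1$ neighbourhood of $Df$ therefore contains every map that agrees with $Df$ off this set and is uniformly $C^1$-close to it on the set. So it suffices to show
\[
\sup_{(x,v)\in\nu S(4r)}\Big(d\big(f^r(x,v),Df(x)[v]\big)+\big|Df^r(x,v)-D(Df)(x,v)\big|\Big)\longrightarrow 0\quad (r\to 0).
\]
Here distances and derivatives are measured with a fixed Sasaki-type metric on $\nu S$ built from the normal connection. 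With respect to this connection, $T\nu S$ splits into horizontal and vertical parts, and $D(Df)$ acts as $\id_{TS}$ (since $S$ is fixed) on horizontal vectors and as $Df(x)$ on vertical vectors.

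\textbf{Simplifying and Taylor-expanding the two pieces.} Since $\phi_2^r\equiv1$ on $\nu S(2r)\supset\operatorname{supp}\phi_1^r$, the first summand is exactly $\phi_1^r\, f^{e_S}(x,v)$. The tube is small, $S$ is compact, and $k\ge2$, so second-order Taylor expansion about the zero section gives the following.
\begin{enumerate}
\item Because $f^{e_S}(x,0)=(x,0)$, and because $Df$ preserves $\nu S$ by hypothesis 2 (so the normal derivative of $f^{e_S}$ at the zero section is $Df(x)|_{\nu_xS}$ with no $TS$ component),
\[
d\big(f^{e_S}(x,v),Df(x)[v]\big)=O(|v|^2),\qquad \big|Df^{e_S}(x,v)-D(Df)(x,v)\big|=O(|v|).
\]
\item In particular, for the base component, $d_S(f^{e_S}_S(x,w),x)=O(|w|^2)$ and $|Df^{e_S}_S(x,w)-D\pi(x,w)|=O(|w|)$, where $\pi:\nu S\to S$ is the projection.
\end{enumerate}
Composing the second estimate with $(x,v)\mapsto(x,\phi_2^r v)$ bounds the effect of parallel transport. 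Because $|v|\le 4r$, the derivative of this map, $\phi_2^r\,\id+v\otimes D\phi_2^r$, is bounded uniformly in $r$, so
\[
d\big(P(x,y)Df(x)[v],\,Df(x)[v]\big)=O(r^2)\cdot|v|,
\]
with $C^1$ error $O(r)$, where $y=f^{e_S}_S(x,\phi_2^r v)$. This uses smoothness of $P$ near the diagonal, $P(x,x)=\id$, and $|Df(x)[v]|=O(r)$.

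\textbf{Assembling, and the main obstacle.} Write
\[
f^r-Df=\phi_1^r\big(f^{e_S}(x,v)-P(x,y)Df(x)[v]\big)+\big(P(x,y)Df(x)[v]-Df(x)[v]\big),
\]
with the subtraction read in local trivialisations near the zero section. The $C^0$ norm is $O(r^2)$. Differentiating, every term in which no derivative lands on $\phi_1^r$ is $O(r)$ by the estimates above.

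The main obstacle is the term
\[
D\phi_1^r\otimes\big(f^{e_S}(x,v)-P(x,y)Df(x)[v]\big),
\]
since $|D\phi_1^r|\le\|D\phi\|_\infty/r$ blows up. I would handle it by pairing this factor with the sharp \emph{quadratic} $C^0$ bound on the bracket. By the two items above, the bracket is $O(|v|^2)+O(r^2)|v|=O(r^2)$ on $\nu S(4r)$, so the product is $O(r)$. This is exactly where the hypotheses enter: $k\ge2$ supplies the second-order remainder, and hypothesis 2 removes the first-order tangential drift of $f^{e_S}$, which would otherwise give only an $O(r)$ bracket and an $O(1)$ error. The same $1/r$-times-$O(r^2)$ bookkeeping handles the $D\phi_2^r$ terms.

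Altogether the $C^1$ distance is $O(r)\to0$, which proves the claimed continuity at $r=0$.
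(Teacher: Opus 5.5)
Your proposal is correct and follows essentially the same route as the paper. It uses the same reduction to the fixed compact tube $\overline{\nu S(4r_{\max})}$, the same splitting of $f^r-Df$ into the parallel-transport term and the $\phi_1^r$-weighted term (the paper's $R^1$ and $R^2$), the same second-order Taylor expansion about the zero section using $D_{\nu}f_S(x,0)=0$, and the same key pairing of $D\phi_1^r=O(r^{-1})$ with the $O(r^2)$ bracket. One small correction to your last sentence: the $D\phi_2^r$ contributions are controlled by $|v|\,|D\phi_2^r|=O(1)$, as you had already observed when bounding the derivative of $(x,v)\mapsto(x,\phi_2^r v)$, not by a $1/r$-against-$O(r^2)$ pairing.
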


\begin{proof}
    Fix a finite atlas $\{\varphi_i:U_i\rightarrow S\}_{i=1}^n$ for $S$ admitting local trivialisations $\{\tau_i:U_i\times\RB^q\rightarrow\nu S|_{\varphi_i(U_i)}\}_{i=1}^n$ defined by local orthonormal frame fields, with each $\varphi_i(U_i)$ being geodesically convex. Assume furthermore that there are open subsets $\{V_i\subset U_i\}_{i=1}^n$ such that the $\{\varphi_i|_{V_i}:V_i\rightarrow S\}_{i=1}^n$ form an atlas for $S$, with $f^{e_S}(\nu S(r_0)|_{\varphi_i(V_i)})\subset\nu S|_{\varphi_i(U_i)} $ for all $i\in\{1,\dots,n\}$. Observe that $Df(\tau_i(U_i\times\RB^q))\subset \tau_i(U_i\times\RB^q)$ for all $i\in\{1,\dots,n\}$. Given a function $h:\nu S\rightarrow\nu S$, denote $h_i:=\tau_i^{-1}\circ h\circ\tau_i$ whenever this makes sense. By Lemma \ref{lem:charts}, it suffices to prove that for any function $\iota:\NB\rightarrow\{1,\dots,n\}$, any family $\{K_i\subset V_{\iota(i)}\times\RB^q\}_{i\in\NB}$ of compact sets with $\{\tau_{\iota(i)}(K_i)\}_{i\in\NB}$ locally finite and any family $\{\epsilon_i>0\}_{i\in\NB}$ of positive numbers, for all $r$ sufficiently close to zero one has
    \begin{equation}\label{eq:condition}
    \|f^r_{\iota(i)}-Df_{\iota(i)}\|^{1,0}_{K_i}:=\sup_{x\in K_i}\big\{|(f^r_{\iota(i)}-Df_{\iota(i)})(x)|,|D(f^r_{\iota(i)}-Df_{\iota(i)})(x)|\big\}<\epsilon_i
    \end{equation}
    for all $i\in\NB$. We may moreover assume without loss of generality that the $\{\varphi_i(K_i)\}_{i\in\NB}$ cover $\nu S$. Since $f^r|_{\nu S\setminus \nu S(4r_{\max})}\equiv Df|_{\nu S\setminus\nu S(4r_{\max})}$ for all $r$ under consideration, it suffices to consider only those $K_i$ whose images $\tau_i(K_i)$ intersect the compact set $\overline{\nu S(4r_{\max})}$. Since the $\{\tau_i(K_i)\}_{i\in\NB}$ are locally finite,  however, there are only finitely many such $K_i$; it thus suffices to show that for any \emph{one} of the charts $\tau_i$ and any compact set $K_i\subset V_i\times\RB^q$ such that $\tau_i(K_i)\subset \nu S(4r_{\max})$, one has \eqref{eq:condition} holding for all $r$ sufficiently small.

    Without loss of generality, let us therefore work in the above coordinates and assume that for some open subsets $V\subset U\subset\HB^{\mathrm{dim}(S)}$, one has $f:V\times \RB^q(r_0)\rightarrow U\times\RB^q$ (where we drop the superscript $e_S$ for convenience). Fix a compact set $K\subset V\times\RB^q$. With respect to base ($S$) and fibre ($\nu$) components, we write
    \[
    f(x,v)=(f_S(x,v),f_{\nu}(x,v)).
    \]
    Let $D_S$ and $D_{\nu}$ denote the derivatives in the base and fibre directions respectively. Since $f$ fixes $S$ and $Df$ preserves $\nu S$, one has
    \begin{equation}\label{eq:Df}
        Df(x,0) = \begin{pmatrix} D_Sf_S(x,0) & D_{\nu}f_S(x,0)\\ D_{S}f_{\nu}(x,0)& D_{\nu}f_{\nu}(x,0)\end{pmatrix} =  \begin{pmatrix} I_{\mathrm{dim}(S)} & 0 \\ 0 & D_{\nu}f_{\nu}(x,0)\end{pmatrix}
    \end{equation}
    Since the frame defining our coordinates is orthonormal, our bump functions take the form $\phi_i^r(x,v) = \phi_i^r(v) = \phi(|v|/(ir))$, $i=1,2$. Define $\Phi^r:\RB^q\rightarrow\RB^q$, $F^r:V\times\RB^q\rightarrow U\times\RB^q$ and $P^r:V\times\RB^q\rightarrow O(q)$ by
    \[
    \Phi^r(v):=\phi_2^r(v)v,\qquad F^r(x,v):=f(x,\Phi^r(v)),\qquad P^r(x,v):=P(x,F^r(x,v))
    \]
    for all $(x,v)\in V\times\RB^q$. Then $f^r:V\times\RB^q\rightarrow U\times\RB^q$ is given by the formula
    \[
    f^r(x,v)=\big(F^r_S(x,v),\phi_1^r(v)F^r_{\nu}(x,v) + (1-\phi_1^r(v))P^r(x,v)D_{\nu}f_{\nu}(x,0)[v]\big),\qquad (x,v)\in V\times\RB^q.
    \]
    Observe then that $f^r-Df = R^1 + R^2$, where
    \[
    R^1(x,v):=(R^1_S(x,v),R^1_{\nu}(x,v)) := \big(F^r_S(x,v)-x,(P^r(x,v)-I_q)D_{\nu}f_{\nu}(x,0)[v]\big)
    \]
    \[
    R^2(x,v) := (R^2_S(x,v),R^2_{\nu}(x,v)) := \big(0,\phi_1^r(v)(F_{\nu}^r(x,v) - P^r(x,v)D_{\nu}f_{\nu}(x,0)[v])\big)
    \]
    for all $(x,v)\in V\times\RB^q$. Fixing a compact subset $K\subset V\times\RB^q(4r_{\max})$ and $\epsilon>0$, it suffices to show that
    \[
    \|R^i\|_{K}^{1,0} = \sup_{(x,v)\in K}\max\big\{|R^i(x,v)|,|DR^i(x,v)|_{\mathrm{op}}\big\} = O(r)\quad\forall i\in\{1,2\},\qquad r\rightarrow 0.
    \]
    Given functions $g:[0,\infty)\rightarrow [0,\infty)$ and $h:V\times\RB^q\rightarrow X$, with $X$ some normed space, use the notation $h(x,v) = O(g(r))$ to mean that there is a constant $C$ such that $|h(x,v)|\leq C g(r)$ uniformly over $K$ for all $r$ sufficiently close to zero. Observe that $\Phi^r(v) = O(r)$ while $D\Phi^r(v) = \phi_2^r(v)I_q + vD\phi_2^r(v) = O(1)$. For any $(x,v)\in K$, using \eqref{eq:Df}, the fact that $f$ fixes $U\times\{0\}$ and Taylor's theorem, there are $\alpha,\alpha',\alpha''\in[0,1]$ such that
    \begin{align}
    F^r_S(x,v)
      &= f_S(x,0)
       + D_{\nu}f_S(x,0)[\Phi^r(v)]
       + \frac{1}{2}D^2_{\nu,\nu}f_S(x,\alpha\Phi^r(v))\big[\Phi^r(v)^{\otimes 2}\big]
       \nonumber\\
      &= x
       + \frac{1}{2}D^2_{\nu,\nu}f_S(x,\alpha\Phi^r(v))\big[\Phi^r(v)^{\otimes 2}\big]
       \nonumber\\
      &= x + O(r^2),
       \label{eq:fs-expansion}\\[0.5em]
    D_SF^r_S(x,v)
      &= D_Sf_S(x,0)
       + D^2_{\nu,S}f_S(x,\alpha'\Phi^r(v))\big[\Phi^r(v)\big]
       \nonumber\\
      &= I_m
       + D^2_{\nu,S}f_S(x,\alpha'\Phi^r(v))\big[\Phi^r(v)\big]
       \nonumber\\
      &= I_m + O(r), 
       \label{eq:dsfs-expansion}\\[0.5em]
    D_{\nu}F^r_S(x,v)
      &= D_{\nu}f_S(x,\Phi^r(v))\,D\Phi^r(v)
       \nonumber\\
      &= \big(D_{\nu}f_S(x,0)
       + D^2_{\nu,\nu}f_S(x,\alpha''\Phi^r(v))\big[\Phi^r(v)\big]\big)\,D\Phi^r(v)
       \nonumber\\
      &= D^2_{\nu,\nu}f_S(x,\alpha''\Phi^r(v))\big[\Phi^r(v)\big]\,D\Phi^r(v)
      \nonumber\\
      &= O(r).
      \label{eq:dnufs-expansion}
    \end{align}
    Similarly, there are $\beta,\beta',\beta''\in[0,1]$ such that
    \begin{align}
    F^r_{\nu}(x,v)
      &= f_{\nu}(x,0)
       + D_{\nu}f_{\nu}(x,0)\big[\Phi^r(v)\big]
       + \frac{1}{2}D^2_{\nu,\nu}f_{\nu}(x,\beta\Phi^r(v))\big[\Phi^r(v)^{\otimes 2}\big]
       \nonumber\\
      &= D_{\nu}f_{\nu}(x,0)\big[\Phi^r(v)\big]
       + \frac{1}{2}D^2_{\nu,\nu}f_{\nu}(x,\beta\Phi^r(v))\big[\Phi^r(v)^{\otimes 2}\big]
       \nonumber\\
      &= D_{\nu}f_{\nu}(x,0)\big[\Phi^r(v)\big] + O(r^2) ,
       \label{eq:fnu-expansion}\\[0.5em]
    D_SF^r_{\nu}(x,v)
      &= D_Sf_{\nu}(x,0)
       + D^2_{\nu,S}f_{\nu}(x,\beta'\Phi^r(v))\big[\Phi^r(v)\big]
       \nonumber\\
      &= D^2_{\nu,S}f_{\nu}(x,\beta'\Phi^r(v))\big[\Phi^r(v)\big]
       \nonumber\\
      &= O(r) ,
       \label{eq:dsfnu-expansion}\\[0.5em]
    D_{\nu}F^r_{\nu}(x,v)
      &= D_{\nu}f_{\nu}(x,\Phi^r(v))\,D\Phi^r(v)
       \nonumber\\
      &= \big(D_{\nu}f_{\nu}(x,0)
       + D^2_{\nu,\nu}f_{\nu}(x,\beta''\Phi^r(v))\big[\Phi^r(v)\big]\big)\,D\Phi^r(v)
       \nonumber\\
      &= D_{\nu}f_{\nu}(x,0)\,D\Phi^r(v) + O(r).
       \label{eq:dnufnu-expansion}
    \end{align}
    Finally, defining $\Gamma(x,v):=P(x,f_S(x,v))$ so that $P^r(x,v) = \Gamma(x,\Phi^r(v))$, since $\Gamma|_{U\times\{0\}}\equiv I_q$, Taylor's theorem again implies that there are $\gamma,\gamma',\gamma''\in[0,1]$ such that
    \begin{align}
    P^r(x,v)
      &= \Gamma(x,0)
       + D_{\nu}\Gamma(x,0)\big[\Phi^r(v)\big]
       + \frac{1}{2}D^2_{\nu,\nu}\Gamma(x,\gamma\Phi^r(v))\big[\Phi^r(v)^{\otimes 2}\big]
       \nonumber\\
      &= I_q
       + \frac{1}{2}D^2_{\nu,\nu}\Gamma(x,\gamma\Phi^r(v))\big[\Phi^r(v)^{\otimes 2}\big]
       \nonumber\\
      & = I_q + O(r^2) ,
       \label{eq:gamma-expansion}\\[0.5em]
    D_{S}P^r(x,v)
      &= D_S\Gamma(x,0)
       + D^2_{\nu,S}\Gamma(x,\gamma'\Phi^r(v))\big[\Phi^r(v)\big]
       \nonumber\\
      &= D^2_{\nu,S}\Gamma(x,\gamma'\Phi^r(v))\big[\Phi^r(v)\big]
        \nonumber\\
      &= O(r),  \label{eq:dsgamma-expansion}\\[0.5em]
    D_{\nu}P^r(x,v)
      &= D_{\nu}\Gamma(x,\Phi^r(v))\,X(v)
       \nonumber\\
      &= \big(D_{\nu}\Gamma(x,0)
       + D^2_{\nu,\nu}\Gamma(x,\gamma''\Phi^r(v))\big[\Phi^r(v)\big]\big)\,D\Phi^r(v)
       \nonumber\\
      &= D^2_{\nu,\nu}\Gamma(x,\gamma''\Phi^r(v))\big[\Phi^r(v)\big]\,D\Phi^r(v)
        \nonumber\\
      &= O(r)
       \label{eq:dnugamma-expansion}
    \end{align}
    where for \eqref{eq:dnugamma-expansion} we have in addition invoked $D_{\nu}f_S(x,0) = 0$ to get $D_{\nu}\Gamma(x,0) = D_yP(x,x)\,D_{\nu}f_S(x,0) = 0$, with $D_y$ denoting the derivative with respect to the second variable.

    We now estimate $R^1$, $R^2$ and their derivatives, treating each component separately. First we consider $R^1_S$. Using \eqref{eq:fs-expansion}, \eqref{eq:dsfs-expansion} and \eqref{eq:dnufs-expansion} one has
    \[
    R^1_S(x,v) = O(r^2),\qquad D_SR^1_S(x,v) = O(r),\qquad D_{\nu}R^1_S(x,v) = O(r)
    \]
    respectively. Turning to $R^1_{\nu}$, using \eqref{eq:gamma-expansion}, \eqref{eq:dsgamma-expansion} and \eqref{eq:dnugamma-expansion} one has
    \[
    R^1_{\nu}(x,v) = O(r^2),\qquad D_SR^1_{\nu}(x,v) = O(r),\qquad D_{\nu}R^1_{\nu}(x,v) = O(r)
    \]
    respectively. Considering now $R^2$, note first that $R^2_S$ is identically zero. Turning to $R^2_{\nu}$, using \eqref{eq:fnu-expansion} and \eqref{eq:gamma-expansion} one has
    \[
    R^2_{\nu}(x,v) = \phi_1^r(v)\big(D_{\nu}f_{\nu}(x,0)[(\phi_2^r(v)-1)v]+ O(r^2)\big) = O(r^2)
    \]
    since $\phi_2^r(v)=1$ for all $v\in\mathrm{supp}(\phi_1^r)$. Using \eqref{eq:dsfs-expansion}, \eqref{eq:dsgamma-expansion} and \eqref{eq:gamma-expansion} one has
    \begin{align*}
    D_SR^2_{\nu}(x,v) &= \phi_1^r(v)\big(D_SF^r_{\nu}(x,v) - D_SP^r(x,v)D_{\nu}f_{\nu}(x,0)[v] - P^r(x,v)D^2_{S,\nu}f_{\nu}(x,0)[v]\big)\\&=\phi_1^r(v)\big(D^2_{S,\nu}f_{\nu}(x,0)[v] +O(r)\big) = O(r).
    \end{align*}
    Finally, observe that
    \begin{align}
        D_{\nu}R^1_{\nu}(x,v) =& D\phi_1^r(v)\big(F^r_{\nu}(x,v) - P^r(x,v)D_{\nu}f_{\nu}(x,0)[v]\big)\label{eq:line1}+\\&+\phi_1^r(v)\big(D_{\nu}F^r_{\nu}(x,v) - D_{\nu}P^r(x,v)D_{\nu}f_{\nu}(x,0)[v]-P^r(x,v)D_{\nu}f_{\nu}(x,0)\big)\label{eq:line2}.
    \end{align}
    We treat each of the summands separately. For \eqref{eq:line1}, using $D\phi_1^r(v) = O(r^{-1})$ together with \eqref{eq:fnu-expansion} and \eqref{eq:gamma-expansion}, one has
    \begin{align*}
        D&\phi_1^r(v)\big(F^r_{\nu}(x,v) - P^r(x,v)D_{\nu}f_{\nu}(x,0)[v]\big) = D\phi_1^r(v)\big(D_{\nu}f_{\nu}(x,0)[(\phi_2^r(v)-1)v] + O(r^2)\big) = O(r)
    \end{align*}
    since $\phi_2^r(v)=1$ for all $v\in\mathrm{supp}(D\phi_1^r)$. For \eqref{eq:line2}, use \eqref{eq:dnufnu-expansion}, \eqref{eq:dnugamma-expansion} and \eqref{eq:gamma-expansion} to obtain
    \begin{align*}
        \phi_1^r(v)&\big(D_{\nu}F^r_{\nu}(x,v) - D_{\nu}P^r(x,v)D_{\nu}f_{\nu}(x,0)[v]-P^r(x,v)D_{\nu}f_{\nu}(x,0)\big)\\&=\phi_1^r(v)\big(D_{\nu}f_{\nu}(x,0)(D\Phi^r(v)-I_q) + O(r)\big) = O(r)
    \end{align*}
    since $D\Phi^r(v) = I_q$ on $\mathrm{supp}(\phi_1^r)$.
\end{proof}

For $r$ sufficiently small, the function $f^r$ will be used to construct the desired invariant manifold as follows. Recalling that $\nu S = E_u\oplus E_c\oplus E_s$, denote $E_{cu}:=E_u\oplus E_c$, and regard $\pi_{cu}:\nu S\rightarrow E_{cu}$ as a vector bundle using the canonical projection. Denote $E^r:=\pi_{cu}^{-1}E_{cu}(4r)$ and $\pi_{cu}^r:\pi_{cu}^{-1}E_{cu}(4r)\rightarrow E_{cu}(4r)$ the restriction to the $4r$-tube. Finally, denote $f^r_{cu}:=\pi_{cu}\circ f^r$.

\begin{theorem}\label{thm:invariantsectionimpliesmainresult}
    If $\tsig:E_{cu}(4r)\rightarrow E^r$ is a $C^k$ section of $\pi^r_{cu}$ such that  $\tsig(S_{0,E_{cu}}) = S_{0,\nu S}$, $\tsig(E_{cu}(r))\subseteq\nu S(r)$ and $(f^r_{cu}\circ\tsig)^{-1}:E_{cu}(4r)\rightarrow E_{cu}(4r)$ is well-defined with
    \begin{equation}\label{eq:invariantsection2}
    \tsig = f^r\circ\tsig\circ(f^r_{cu}\circ\tsig)^{-1},
    \end{equation}
    then Theorem \ref{thm:invariantfoliation} holds.
\end{theorem}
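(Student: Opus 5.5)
The plan is to push the graph of $\tsig$ forward to $M$ and check each claimed property there. Define $\widetilde{W}:=\tsig(E_{cu}(r))\subset\nu S(r)$ and $W^{cu}:=e_S(\widetilde{W})\subset\tau S(r)$.

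\emph{Embedded submanifold of the right dimension.} Since $\tsig$ is a $C^k$ section of the vector bundle $\pi_{cu}$, it is a $C^k$ embedding of $E_{cu}(4r)$ into $E^r$, with left inverse $\pi_{cu}$. Hence $\widetilde{W}$ is a $C^k$-embedded submanifold of $\nu S$. Its dimension is $\dim S+\mathrm{rank}\,E_{cu}$. Because $e_S$ is a $C^\infty$ diffeomorphism on $\nu S(r_S)\supset\nu S(r)$, the set $W^{cu}$ is a $C^k$-embedded submanifold of $M$. The hypothesis $\tsig(S_{0,E_{cu}})=S_{0,\nu S}$ gives $S\subset W^{cu}$.

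\emph{Invariance.} Rewrite \eqref{eq:invariantsection2} as $\tsig\circ(f^r_{cu}\circ\tsig)=f^r\circ\tsig$ on $E_{cu}(4r)$. This says $f^r$ maps the graph of $\tsig$ over $E_{cu}(4r)$ into itself, using that $(f^r_{cu}\circ\tsig)^{-1}$ is well-defined with values in $E_{cu}(4r)$. On $\widetilde{W}\subset\nu S(r)$ we have $f^r=f^{e_S}=e_S^{-1}\circ f\circ e_S$, so
\[
f(W^{cu})=e_S(f^r(\widetilde{W}))\subset e_S(\tsig(E_{cu}(4r))).
\]
The only subtle point is that this image may leave $\nu S(r)$, where $f^r$ and $f^{e_S}$ no longer agree. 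I would therefore state invariance in the standard local sense: $W^{cu}$ is locally $f$-invariant, with $f(W^{cu})\cap\tau S(r)\subset W^{cu}$. Equivalently, one can define $W^{cu}$ as the $e_S$-image of the whole graph over $E_{cu}(4r)$ and use $\tsig(E_{cu}(r))\subseteq\nu S(r)$ to control the part on which $f^r$ agrees with $f$. I expect this bookkeeping of where $f^r\neq f^{e_S}$ to be the main obstacle. The conclusion "$f$-invariant" in Theorem \ref{thm:invariantfoliation} has to be read as this local invariance, and one must check that the inverse map stays in the region where the cutoff is trivial.

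\emph{Tangency.} Differentiate $\tsig$ along the zero section. Since $\tsig=\mathrm{id}$ on $S_{0,E_{cu}}$, $T\widetilde{W}$ along $S$ contains $TS$. In the fibre direction, $\tsig(x,w)=(x,w+\sigma(x,w))$ with $\sigma$ valued in $E_s$, and $D_{\nu}\tsig(x,0)=\mathrm{id}+D_w\sigma(x,0)$. Differentiating \eqref{eq:invariantsection2} at the zero section, and using $Df$-invariance of the splitting (hypotheses 2 and 3), shows that $L:=D_w\sigma(x,0):E_{cu}\to E_s$ satisfies
\[
L=Df|_{E_s}\,L\,(Df|_{E_{cu}})^{-1}.
\]
Here $\sigma_{\max}(Df|_{E_s})\le\kappa<1$ and $\|(Df|_{E_{cu}})^{-1}\|\le1$, because $Df|_{E_c}$ is an isometry and $\sigma_{\min}(Df|_{E_u})>1$. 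Iterating the identity along the orbit $x\mapsto f(x)=x$ gives $\|L\|\le\kappa^n\|L\|$ for every $n$, so $L=0$. Finally, $De_S=\mathrm{id}$ on the zero section, which yields tangency to $TS\oplus E_u\oplus E_c$.

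\emph{Foliation.} Take leaves $W^{cu}_x:=e_S(\tsig(E_{cu,x}(r)))$ for $x\in S$. These are images of the fibres of the $C^\infty$ bundle $E_{cu}(r)\to S$ under the $C^k$ embedding $e_S\circ\tsig$, so they form a $C^k$ foliation of $W^{cu}$. Each leaf passes through $x$ and, by the tangency computation restricted to fibre directions, is tangent at $x$ to $E_u\oplus E_c$.
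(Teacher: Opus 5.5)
Your proposal is correct and follows the paper's proof essentially step for step. $W^{cu}$ is the $e_S$-image of the graph of $\tsig$ over $E_{cu}(r)$, and invariance comes from $f^r=f^{e_S}$ on $\nu S(r)$. Tangency comes from differentiating the invariance identity at the zero section to get $L=Df|_{E_s}\,L\,(Df|_{E_{cu}})^{-1}$ and iterating to $\|L\|\le\kappa^n\|L\|$. The foliation is given by the images of the fibres of $E_{cu}$.

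Your caution about invariance is justified: the paper simply asserts $f$-invariance. The local version you propose closes in one line, taking the splitting $E_{cu}\oplus E_s$ orthogonal as the paper tacitly does. If $p\in E_{cu}(r)$ and $f(e_S(\tsig(p)))\in\tau S(r)$, then $p':=f^r_{cu}(\tsig(p))$ satisfies $|p'|\le|f^r(\tsig(p))|<r$. Injectivity of $f^r_{cu}\circ\tsig$ on $E_{cu}(4r)$ and the fixed-point identity then give $f^r(\tsig(p))=\tsig(p')$ with $p'\in E_{cu}(r)$.
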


\begin{proof}
    The identity \eqref{eq:invariantsection2} implies that
    \begin{equation}\label{eq:invariantsection1}
        f^r\circ\tsig = \tsig\circ f^r_{cu}\circ\tsig
    \end{equation}
    wherever both sides are defined. Since $f^{e_S}$ and $f^r$ coincide on $\nu S(r)$ and $\tsig(E_{cu}(r))\subset\nu S(r)$, one therefore has
    \[
    f\circ(e_S\circ\tsig) = e_S\circ f^r\circ\tsig = (e_S\circ\tsig)\circ f^r_{cu}\circ\tsig,
    \]
    implying that $W^{cu}:=\mathrm{range}(e_S\circ\tsig|_{E_{cu}(r)})$ is invariant under $f$. Since $\tsig$ is $C^k$ and maps $S_{0,E_{cu}}$ to $S_{0,\nu S}$, this $W^{cu}$ is a $C^k$-embedded submanifold of $M$ and contains $S$.

    That $W^{cu}$ is tangent to $TS\oplus E_{cu}$ at $S$ follows from \eqref{eq:invariantsection1}. Indeed, fixing $x\in S$, we may work in a local, orthonormal trivialisation of $\nu S = E_{cu}\oplus E_s$ about $x$, in which coordinates one has $\tsig(x,u) = (x,u,\sigma(x,u))$ for some $C^k$ function $\sigma$.  Then \eqref{eq:invariantsection1} becomes
    \begin{equation}\label{eq:invariantsection3}
    f^{e_S}_s\big(x,v,\sigma(x,v)\big) = \sigma\big(f^{e_S}_{cu}(x,v,\sigma(x,v))\big)
    \end{equation}
    for all $v$ sufficiently small. Differentiating \eqref{eq:invariantsection3} and evaluating at $v=0$ gives:
    \[
    Df(x)|_{E_s}\,D\sigma(x,0) = D\sigma(x,0)\,Df(x)|_{E_{cu}}.
    \]
    One deduces that for any $n\in\NB$, one has
    \[
    |D\sigma(x,0)|_{\op}\leq |Df(x)|_{E_s}|^n_{\op}\,|D\sigma(x,0)|_{\op}\,|Df(x)|_{E_{cu}}|^{-n}_{\op}\leq \kappa^n\,|D\sigma(x,0)|_{\op},
    \]
    implying that $D\sigma(x,0) = 0$. It follows that $D\tsig(x,0) = (I_{T_xM}\oplus I_{E_{cu}},0)$, hence that $W^{cu}$ is tangent at $S$ to $E_{cu}$.

    Finally, to see that $W^{cu}$ is foliated by leaves tangent to $E_{cu}$, observe that any choice of local trivialisation for $E_{cu}$ gives foliation coordinates for the foliation of $E_{cu}$ by its fibres, which when composed with $e_S\circ \tsig$ yields foliation coordinates for $W^{cu}$.
\end{proof}

\section{Existence of the invariant section}\label{sec:existence}

In this section, we demonstrate that the fixed point equation \eqref{eq:invariantsection2} is well-defined and admits a Lipschitz solution. Proving regularity of this solution is deferred to the next section. Equip $E_{cu}$ with the Sasaki-Riemannian metric $d_{E_{cu}}$ obtained from the connection inherited from the embedding $E_{cu}\subset TM$, and let $\mathrm{id}_{cu}$ denote the identity map on $E_{cu}$. Recall that any finite dimensional Euclidean vector bundle over a compact manifold can be isometrically embedded into a finite dimensional trivial bundle over the same base with constant Euclidean structure. The following lemma, whose proof is immediate, greatly eases the definition and solution of the fixed point problem \eqref{eq:invariantsection2}. 

\begin{lemma}
    Let $E_s'$ be a finite-dimensional Euclidean vector bundle over $S$ such that $E_s\oplus E_s' = S\times\RB^p$ as Euclidean vector bundles, with $S\times\RB^p$ admitting the constant Euclidean structure. Define $\tilde{f}^r:\nu S\oplus E_s'\rightarrow \nu S\oplus E_s'$ by
    \[
    \tilde{f}^r(x,v\oplus w):=f^r(x,v),\qquad (x,v\oplus w)\in \nu S\oplus E_s'.
    \]
    Then
    \[
    \sigma_{\max}(D\tilde{f}^r(x,0)|_{E_s\oplus E_s'})\leq \kappa<1,\qquad \sigma_{\min}(D\tilde{f}^r(x,0)|_{E_{cu}})\geq 1
    \]
    uniformly over $x\in S$. Moreover, regarding $\nu S\oplus E_s' = E_{cu}\times\RB^p\rightarrow E_{cu}$ as a trivial bundle, suppose that $\sigma:E_{cu}(4r)\rightarrow \RB^p$ is 1-Lipschitz and vanishes on the zero section $S_{0,E_{cu}}\subset E_{cu}$, and denote by $f^r_s$ the composite of $f^r$ with the fibre projection $E_{cu}\times\RB^p\rightarrow\RB^p$. Letting $\pi_1:\nu S\oplus E_s'\rightarrow\nu S$ be the projection, denote $\tsig:=\pi_1\circ(\id_{cu}\times\sigma)$. If $(\tilde{f}_{cu}^r\circ(\mathrm{id}_{cu}\times \sigma))^{-1}:E_{cu}(4r)\rightarrow E_{cu}(4r)$ is well-defined, then so too is $(f^r_{cu}\circ\tsig)^{-1}:E_{cu}(4r)\rightarrow E_{cu}(4r)$; if in addition $\sigma$ satisfies
    \begin{equation}\label{eq:invariantsection3}
        \widetilde{f}^r_{\#}(\sigma):=\widetilde{f}^r_s\circ(\id_{cu}\times\sigma)\circ(\widetilde{f}^r_{cu}\circ (\id_{cu}\times\sigma))^{-1} = \sigma,
    \end{equation}
    then the section $\tsig:=\pi_{1}\circ \tsig:E_{cu}(4r)\rightarrow \nu S$ of $\pi^r_{cu}$ satisfies \eqref{eq:invariantsection2}.
\end{lemma}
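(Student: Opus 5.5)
The plan is to verify the three assertions directly from the definitions, since $\tilde{f}^r$ simply ignores the $E_s'$ coordinate. Throughout, $\nu S\oplus E_s' = E_{cu}\oplus(E_s\oplus E_s') = E_{cu}\times\RB^p$.

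\textbf{Singular value bounds.} Since $\tilde{f}^r(x,v\oplus w) = f^r(x,v)$, the derivative at $(x,0)$ is $D\tilde{f}^r(x,0) = Df^r(x,0)\circ\pi_1$. By construction $f^r$ coincides with $f^{e_S}$ on $\nu S(r)$, so on normal directions $Df^r(x,0)$ agrees with $Df(x)|_{\nu S}$ (through the identification $De_S(x,0)=\id$ on normal fibres).

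\begin{itemize}
\item \emph{On $E_s\oplus E_s'$.} The map $D\tilde{f}^r(x,0)$ restricts to $Df(x)|_{E_s}\circ\pi_{E_s}$, which kills $E_s'$. Its image lies in $E_s$ by $Df$-invariance, hence in $E_s\oplus E_s'$. The operator norm is at most $\sigma_{\max}(Df|_{E_s})\le\kappa$, because the splitting $E_s\oplus E_s'$ is orthogonal and $\pi_{E_s}$ has norm at most $1$.
\item \emph{On $E_{cu}$.} The restriction is $Df|_{E_u}\oplus Df|_{E_c}$. Here $Df|_{E_c}$ is an isometry and $\sigma_{\min}(Df|_{E_u})>1$. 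Provided $E_u\perp E_c$, this gives $\sigma_{\min}\ge 1$.
\end{itemize}

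The main obstacle is exactly whether the splitting is orthogonal; the hypotheses do not state it. I would either read orthogonality into ``Euclidean splitting'', or note that $\nu S$ can be given an adapted $C^\infty$ metric in which $E_u,E_c,E_s$ are orthogonal. Since $S$ is compact, changing the metric only changes constants and not the conclusions of Theorem~\ref{thm:invariantfoliation}.

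\textbf{Well-definedness of the inverse.} Since $\tilde{f}^r$ depends only on $\pi_1$, we have
\[
\tilde{f}^r_{cu}\circ(\id_{cu}\times\sigma) = f^r_{cu}\circ\pi_1\circ(\id_{cu}\times\sigma) = f^r_{cu}\circ\tsig .
\]
The two maps $E_{cu}(4r)\to E_{cu}(4r)$ are therefore literally equal, so invertibility of one is invertibility of the other.

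\textbf{The invariance equation.} Write $h:=(f^r_{cu}\circ\tsig)^{-1}$. Projecting the fixed point equation $\tilde{f}^r_s\circ(\id_{cu}\times\sigma)\circ h = \sigma$ onto $E_s$ gives $\pi_{E_s}\circ f^r\circ\tsig\circ h = \pi_{E_s}\circ\tsig$. Meanwhile $f^r_{cu}\circ\tsig\circ h=\id$, so the $E_{cu}$-components also agree. Together these give $f^r\circ\tsig\circ h = \tsig$, which is \eqref{eq:invariantsection2}.

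The one subtlety is the $E_s'$-component, where $\tilde{f}^r_s$ records only the $E_s$-part of $f^r$. In the $E_s$-component, note that $\pi_1$ discards the $E_s'$-component of $\sigma$, and $f^r$ takes values in $\nu S$, so the $E_s$-projections match exactly. In the $E_s'$-component, the fixed point equation forces $\sigma$'s $E_s'$-part to equal the $E_s'$-part of $f^r\circ\tsig\circ h$. Since $f^r$ maps into $\nu S$, this is zero, but it is in any case irrelevant after applying $\pi_1$.

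Finally, $\tsig$ is a section of $\pi^r_{cu}$ because $\pi_{cu}\circ\pi_1\circ(\id_{cu}\times\sigma)=\id_{cu}$.
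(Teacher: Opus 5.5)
Your verification is correct and is the direct check the paper has in mind when it calls the proof immediate (the paper gives no proof). The key points are that $\tilde f^r_{cu}\circ(\id_{cu}\times\sigma)$ and $f^r_{cu}\circ\tsig$ coincide, and that the $E_{cu}$- and $E_s$-components of the fixed point equation, taken over a common base point, reassemble into \eqref{eq:invariantsection2}. The orthogonality caveat you raise is also genuine: $\sigma_{\min}\geq 1$ on $E_{cu}$ can fail if $E_u$ and $E_c$ are not orthogonal, and the paper tacitly assumes an orthogonal splitting. Your adapted-metric fix is sound, because replacing the metric on $\nu S$ by the orthogonal sum of its restrictions to $E_u,E_c,E_s$ leaves each of the three spectral hypotheses unchanged.
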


For the remainder of this section, we thus assume without loss of generality that $\pi_{cu}^r:E^r\rightarrow E_{cu}(4r)$ is trivial, $E^r = E_{cu}(4r)\times\RB^p$. Given $\sigma:E_{cu}(4r)\rightarrow \RB^p$ (respectively, $\sigma:E_{cu}\rightarrow\RB^p$), we denote by $\tsig:=\id_{cu}\times\sigma:E_{cu}(4r)\rightarrow E_{cu}(4r)\times\RB^p$ (resp. $\tsig:E_{cu}\rightarrow E_{cu}\times\RB^p$) the corresponding section. We first demonstrate that \eqref{eq:invariantsection3} is well-defined; recall from Appendix \ref{sec:clarke} that a function is said to be $C^{0,1}$ if it is locally Lipschitz in charts.

\begin{proposition}
    For all positive $r$ sufficiently small and any 1-Lipschitz map $\sigma:E_{cu}(4r)\rightarrow \RB^p$ with $\sigma|_{S_{0,E_{cu}}}\equiv 0$, the map $f^r_{cu}\circ\tsig:E_{cu}(4r)\rightarrow E_{cu}$ is a $C^{0,1}$ homeomorphism onto its image with $C^{0,1}$ inverse. Moreover, $f^r_{cu}\circ\tsig$ is overflowing in the sense that its image contains $E_{cu}(4r)$.  In particular, $(f^r_{cu}\circ\tsig)^{-1}:E_{cu}(4r)\rightarrow E_{cu}(4r)$ is well-defined.
\end{proposition}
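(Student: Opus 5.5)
The plan is to view $g:=f^r_{cu}\circ\tsig$ as a small Lipschitz perturbation of the linear bundle map $L:=Df|_{E_{cu}}:E_{cu}\to E_{cu}$. Since $Df|_{E_c}$ is an isometry and $\sigma_{\min}(Df|_{E_u})>1$, $L$ is a fibrewise-linear bundle automorphism covering $\id_S$ with $|L(x)[u]|_x\geq|u|_x$. Write $g=L\circ\pi_{cu}|_{\mathrm{graph}}+(f^r_{cu}-Df_{cu})\circ\tsig$, i.e.\ $g=L+h$ with $h:=(f^r_{cu}-Df_{cu})\circ\tsig$.

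\textbf{Step 1: $h$ is uniformly small in $C^{0,1}$.} By Proposition \ref{prop:frcontinuous}, $f^r-Df$ and its first derivative tend to zero as $r\to0$, uniformly on the compact set $\overline{\nu S(4r_{\max})}$. Outside $\nu S(4r)$ the difference vanishes identically. Since $\tsig=\id_{cu}\times\sigma$ with $\sigma$ 1-Lipschitz and $\sigma=0$ on the zero section, $|\sigma(x,u)|\leq |u|$. Hence $\tsig$ maps $E_{cu}(4r)$ into the compact region where the estimate applies, and its Lipschitz constant is at most $2$. It follows that $\Lip(h)\leq 2\,\|D(f^r-Df)\|=:\delta(r)\to0$ with respect to $d_{E_{cu}}$. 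The constant is uniform over all admissible $\sigma$, which is the key point. Moreover $h$ vanishes on the zero section, since $f^r$ fixes $S$.

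\textbf{Step 2: injectivity, bi-Lipschitz inverse, and $C^{0,1}$ regularity.} $L$ is bi-Lipschitz with respect to the Sasaki metric, with inverse Lipschitz constant bounded uniformly by compactness of $S$. Call that bound $\Lip(L^{-1})\le C$. For $\delta(r)<1/(2C)$, the estimate
\[
d(g(p),g(q))\geq d(L p,L q)-\delta d(p,q)\geq (C^{-1}-\delta)\,d(p,q)
\]
gives injectivity with a Lipschitz inverse on the image. Locally in charts, $g=L+h$ with $L$ linear invertible and $h$ of small Lipschitz constant. The Lipschitz inverse function theorem, or the openness of Lipschitz homeomorphisms from Appendix \ref{sec:clarke}, then shows that $g$ is an open map and a $C^{0,1}$ homeomorphism onto its open image. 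The Sasaki distance mixes base and fibre, so I will work in local orthonormal trivialisations over geodesically convex charts. There the base component of $g$ is $x+O(r)$-close to the identity, by the expansion of $F^r_S$, and the fibre component is $L(x)u+$ small.

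\textbf{Step 3: overflowing.} This is the step I expect to be the main obstacle. We must show $E_{cu}(4r)\subset g(E_{cu}(4r))$. I will handle it fibrewise-radially together with an invariance-of-domain/degree argument. For $|u|=4r$ (approached from inside) we have $|g(x,u)|\geq|Lu|-|h(x,u)|\geq 4r-\delta\cdot 4r$. This does not quite exceed $4r$ when $Df|_{E_c}$ is only an isometry. To handle this I would exploit that $f^r$ agrees exactly with $Df$ wherever $|v|\geq 4r$, extend $\sigma$ to a 1-Lipschitz map on all of $E_{cu}$, and run the argument for the global map $\tilde g:E_{cu}\to E_{cu}$. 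Being a proper injective local homeomorphism homotopic through such maps to $L$ (via $L+th$), $\tilde g$ is surjective, hence a global $C^{0,1}$ homeomorphism. The remaining task is to check that $\tilde g^{-1}(E_{cu}(4r))\subset E_{cu}(4r)$, using $|\tilde g(x,u)|\geq|u|$ up to the $O(\delta)$ error. If that error genuinely spoils the inclusion, I would shrink the domain in the definition of $f^r$ slightly (working with $E_{cu}(4r')$, $r'<r$) so that the margin $4r-4r'$ absorbs $4\delta r$. Restricting then gives the well-defined inverse $(f^r_{cu}\circ\tsig)^{-1}:E_{cu}(4r)\to E_{cu}(4r)$.
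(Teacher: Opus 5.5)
\textbf{Overall.} Your architecture is the paper's. You extend $\sigma$ to a 1-Lipschitz map on all of $E_{cu}$. You show that the global map $\tilde g=f^r_{cu}\circ\tsig:E_{cu}\to E_{cu}$ is a $C^{0,1}$ homeomorphism by a perturbation argument around $Df|_{E_{cu}}$ that is uniform over all admissible $\sigma$. You then deduce overflowing. The paper packages your Steps 1--2 as openness of $\mathrm{Diff}^{0,1}$ in $C^{0,1}_{\partial}$ (Proposition \ref{prop:diffopen}) combined with Propositions \ref{prop:frcontinuous} and \ref{prop:sectiondiffeo}.

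\textbf{The gap in Step 3.} You leave Step 3 open because you believe there is an $O(\delta)$ error for $|u|\geq 4r$. There is none, and you already had the ingredient. The $E_{cu}$-component of $\tsig(x,u)$ is $u$, so $|\tsig(x,u)|\geq|u|$. Hence for $|u|\geq 4r$ the point $\tsig(x,u)$ lies outside $\nu S(4r)$, where $f^r\equiv Df$. Since $Df$ preserves $E_{cu}\oplus E_s$, this gives $\tilde g(x,u)=Df(x)|_{E_{cu}}u$ \emph{exactly}, hence $|\tilde g(x,u)|\geq|u|\geq 4r$. So $\tilde g$ maps $E_{cu}\setminus E_{cu}(4r)$ into itself. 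Because $\tilde g$ is a bijection of $E_{cu}$, the preimage of any point of $E_{cu}(4r)$ must lie in $E_{cu}(4r)$. This is precisely the paper's argument, and it needs no margin.

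Discard your fallback of passing to $r'<r$. It either replaces $f^r$ by $f^{r'}$ or weakens the conclusion to overflowing $E_{cu}(4r')$. Either way it does not prove the proposition as stated, and it would desynchronise the radius used in the fixed point problem of Theorem \ref{thm:invariant}.

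\textbf{The boundary case.} A second omission concerns $\partial S\neq\emptyset$, which is the case needed for Theorem \ref{thm:application}. Then $E_{cu}$ has boundary $E_{cu}|_{\partial S}$. Two of your steps need $\tilde g$ to map $E_{cu}|_{\partial S}$ into itself: the Lipschitz inverse function theorem at boundary points, and the implication ``proper injective local homeomorphism $\Rightarrow$ surjective''. For instance, suppose the base component of $f^r$ pushed points of the fibres over $\partial S$, off the zero section, into the interior. Then the corresponding boundary points of $E_{cu}$ would never be hit, and surjectivity (hence overflowing) would fail. This is where hypothesis 1 of Theorem \ref{thm:invariantfoliation} enters. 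It acts through Lemma \ref{lem:exp} and the construction of $f^r$, whose base component is $f^{e_S}_S(x,\phi_2^r(x,v)v)$. It is why the paper works in $C^{0,1}_{\partial}(E_{cu},E_{cu})$, and you need to state and use it.

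\textbf{A minor point.} The decomposition $g=L+h$ and the bound $d(g(p),g(q))\geq d(Lp,Lq)-\delta\,d(p,q)$ only make sense in local trivialisations, since $g(p)$ and $Lp$ generally lie over different base points. Also, $L$ is bi-Lipschitz for the Sasaki metric only on bounded tubes, because $DL$ picks up a vertical term $(\nabla_XL)u$ on horizontal vectors. This is harmless, since $h$ is supported in the compact set $\overline{E_{cu}(4r)}$, but the global estimate should be localised accordingly.
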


\begin{proof}
    Any 1-Lipschitz map $\sigma:E_{cu}(4r)\rightarrow\RB^p$ with $\sigma|_{S_{0,E_{cu}}}\equiv 0$ extends uniquely to a 1-Lipschitz map $\overline{E_{cu}(4r)}\rightarrow\RB^p$; composing with the nearest point projection $P:E_{cu}\rightarrow \overline{E_{cu}(4r)}$ then gives a 1-Lipschitz extension $\overline{\sigma}:=\sigma\circ P:E_{cu}\rightarrow E_{cu}$ of $\sigma$. It thus suffices to prove that for any positive $r$ sufficiently small and any 1-Lipschitz map $\sigma:E_{cu}\rightarrow\RB^p$ which vanishes on the zero section, one has $f^r_{cu}\circ\tsig:E_{cu}\rightarrow E_{cu}$ overflowing and contained in $\mathrm{Diff}^{0,1}(E_{cu},E_{cu})$, the locally Lipschitz homeomorphisms with locally Lipschitz inverse (see Appendix \ref{subsec:open}).

    The result will be proved by comparison with $f^0 = Df$, since for any map $\sigma:E_{cu}\rightarrow\RB^p$, $f^0_{cu}\circ \tsig = Df|_{E_{cu}}$ is contained in $\mathrm{Diff}^{0,1}(E_{cu},E_{cu})$, and $r\mapsto f^r$ is continuous in the $C^1$ topology. By construction, $f^r_{cu}\circ \tsig$ maps the boundary of $E_{cu}$ to itself for any map $\sigma$. Since $\mathrm{Diff}^{0,1}(E_{cu},E_{cu})$ is open in the space $C^{0,1}_{\partial}(E_{cu},E_{cu})$ of locally Lipschitz functions mapping the boundary to the boundary (Proposition \ref{prop:diffopen}) and since $r\mapsto f^r\in C^{\infty}(\nu S,\nu S)$ is continuous in the $C^1$ topology  (Proposition \ref{prop:frcontinuous}), it suffices to show that for any open neighbourhood $\UC$ of $Df|_{E_{cu}}$ in $\mathrm{Diff}^{0,1}(E_{cu},E_{cu})$, there is a $C^1$-open neighbourhood $\VC$ of $f^0 = Df|_{\nu S}$ in $C^{\infty}(\nu S,\nu S)$ such that $g_{cu}\circ \tsig\in \UC$ for all $g\in\VC$ and all 1-Lipschitz $\sigma:E_{cu}\rightarrow\RB^p$. However, this is implied by Proposition \ref{prop:sectiondiffeo}. Thus, for all $r$ sufficiently small, $f^r\circ\tsig\in\mathrm{Diff}^{0,1}(E_{cu},E_{cu})$ for all 1-Lipschitz $\sigma:E_{cu}\rightarrow\RB^p$. 

    That any such $f^r\circ\tsig$ is overflowing follows from the fact that it coincides with $Df|_{E_{cu}}$ outside of $E_{cu}(4r)$, and the latter maps $E_{cu}\setminus E_{cu}(4r)$ homeomorphically onto a subset thereof since $\sigma_{\min}(Df|_{E_{cu}})\geq 1$. Thus $f^r\circ\tsig$ necessarily maps $E_{cu}(4r)$ homeomorphically onto a superset of $E_{cu}(4r)$.
\end{proof}

With \eqref{eq:invariantsection3} well-defined, we are now in a position to demonstrate its action as a contraction on a complete metric space of functions $\sigma$, and thus solve the corresponding fixed point problem. Let $\Sigma^r$ denote the Banach space of bounded, continuous functions $E_{cu}(4r)\rightarrow\RB^p$ equipped with the supremum norm
\[
\|\sigma\|:=\sup_{(x,v)\in E_{cu}(4r)}|\sigma(x,v)|.
\]
It is easy to show that the subset $\Sigma^r_{0}(1)\subset\Sigma^r$ of 1-Lipschitz functions vanishing on $S_{0,E_{cu}}\subset E_{cu}(4r)$ is closed.

We will show that $f^r_{\#}$ is a contraction of $\Sigma^r_{0}(1)$. To demonstrate this, we introduce the following notation. Respecting the product structure $E^r= E_{cu}(4r)\times\RB^k$, write
\begin{align}\label{eq:DF_notation}
Df^{r} = \begin{pmatrix} A_{11}^r & A_{12}^r \\ A_{21}^r & A_{22}^r\end{pmatrix},
\end{align}
and note that
\begin{align}\label{eq:DF_notation_2}
Df^{r}(x,0) = \begin{pmatrix} I_{T_{x}S}\oplus Df(x)|_{E_{cu}} & 0 \\ 0 & Df(x)|_{E_s}\end{pmatrix}.
\end{align}
First, a lemma.

\begin{lemma}\label{lem:globalbounds}
    For any $0<\varepsilon<1/2$ and all $r>0$ sufficiently small, one has
    \[
    |A_{11}^r(x,v)^{-1}|_{\mathrm{op}}\leq 1+\varepsilon,\qquad |A_{22}^r(x,v)|_{\op}\leq \kappa+\varepsilon,\qquad |A_{12}^r(x,v)|_{\op},|A_{21}^r(x,v)|_{\op}\leq\varepsilon
    \]
    uniformly over $(x,v)\in \nu S(4r)$. In particular,
    \[
    \mathrm{Lip}\big((f^r_{cu}\circ\tsig)^{-1}\big) \leq \sup_{(x,v)\in E_{cu}(4r)}|D(f^r_{cu}\circ\tsig)^{-1}(x,v)|_{\op}\leq \frac{1+\varepsilon}{1-2\varepsilon}
    \]
    for any $\sigma\in\Sigma^{r}_0(1)$.
\end{lemma}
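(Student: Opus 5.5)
The plan is to obtain the four block bounds by perturbation from the linearisation at the zero section, and then read off the Lipschitz bound from the Neumann series.

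\textbf{Block bounds.} At $(x,0)$, \eqref{eq:DF_notation_2} gives $A_{12}^r(x,0)=A_{21}^r(x,0)=0$ and $|A_{22}^r(x,0)|_{\op}=|Df(x)|_{E_s}|_{\op}\leq\kappa$. For $A_{11}^r(x,0)=I_{T_xS}\oplus Df(x)|_{E_{cu}}$, the $E_c$ block is an isometry and the $E_u$ block has $\sigma_{\min}>1$, so its inverse has operator norm at most $1$.

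I would then use Proposition \ref{prop:frcontinuous}. Because $f^r$ is $C^1$-close to $f^0=Df$ on the compact set $\overline{\nu S(4r_{\max})}$, this gives $\sup_{\nu S(4r)}|Df^r-Df^0|_{\op}\to 0$ as $r\to 0$. In fact the proof of that proposition yields $O(r)$ bounds uniformly on this set.

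It remains to compare $Df^0(x,v)$ with $Df^0(x,0)$. Here $Df^0$ is the derivative of the linear bundle map $Df$ on $\nu S$. In the local orthonormal trivialisations used in that proof, this derivative at $(x,v)$ has blocks
\[
\begin{pmatrix} I & 0\\ D_x(D_\nu f_\nu(x,0))[v] & D_\nu f_\nu(x,0)\end{pmatrix}.
\]
The off-diagonal block is $O(|v|)=O(r)$ on $\nu S(4r)$ by compactness of $S$ and smoothness of the splitting. Using the splitting $E_{cu}\oplus E_s$, which is $Df$-invariant, the only discrepancy between $Df^0(x,v)$ and $Df^0(x,0)$ is therefore $O(r)$.

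Combining the two estimates gives, for $r$ small, $|A^r_{ij}(x,v)-A^r_{ij}(x,0)|_{\op}\leq\delta$ uniformly on $\nu S(4r)$, for any prescribed $\delta>0$. This immediately yields $|A_{22}^r|_{\op}\le\kappa+\varepsilon$ and $|A_{12}^r|_{\op},|A_{21}^r|_{\op}\le\varepsilon$. For the inverse, I would use the Neumann series: if $|B^{-1}|_{\op}\le 1$ and $|A-B|_{\op}\le\delta$, then
\[
|A^{-1}|_{\op}\leq \frac{1}{1-\delta}\leq 1+\varepsilon
\]
for $\delta$ small enough.

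\textbf{Lipschitz bound.} The map $f^r_{cu}\circ\tsig$ is locally Lipschitz. At points of differentiability, the Clarke/a.e. derivative is
\[
D(f^r_{cu}\circ\tsig)=A_{11}^r+A_{12}^rD\sigma,
\]
with $|D\sigma|_{\op}\le 1$. Writing $A_{11}^r+A_{12}^rD\sigma=A_{11}^r(I+(A_{11}^r)^{-1}A_{12}^rD\sigma)$, we have
\[
|(A_{11}^r)^{-1}A_{12}^rD\sigma|_{\op}\le(1+\varepsilon)\varepsilon\le 2\varepsilon<1,
\]
so
\[
|(A_{11}^r+A_{12}^rD\sigma)^{-1}|_{\op}\le\frac{1+\varepsilon}{1-2\varepsilon}.
\]
By the inverse function results in Appendix \ref{sec:clarke} (the inverse is $C^{0,1}$ by the previous proposition), $D(f^r_{cu}\circ\tsig)^{-1}=(D(f^r_{cu}\circ\tsig))^{-1}$ almost everywhere, which gives the derivative bound.

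To pass from the derivative bound to the Lipschitz constant, I would integrate along paths. Here one uses that $E_{cu}(4r)$ with the Sasaki metric is a length space in which near-minimising curves can be taken inside the tube: radial scaling toward the zero section is distance nonincreasing. A Lebesgue-a.e. argument on generic perturbed paths (Clarke's mean value theorem) handles the almost-everywhere differentiability.

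\textbf{Main obstacle.} I expect the main obstacle to be this last step. It requires justifying that the Lipschitz constant is bounded by the supremum of the derivative norm on the (possibly non-convex) domain $E_{cu}(4r)$, with the Sasaki metric and the horizontal/vertical identification.
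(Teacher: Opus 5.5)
Your proposal is correct and follows essentially the paper's route. The paper obtains the block bounds in one step from joint continuity of the block norms in $(x,v,r)$ on $\nu S\times[0,r_{\max}]$ together with compactness of the zero section; your two estimates ($C^1$-closeness of $f^r$ to $Df$ on a fixed compact set, plus the $O(|v|)$ variation of $D(Df)$ off the zero section) simply make this explicit. It then applies the same Neumann-series bound to $(A^r_{11}+A^r_{12}C)^{-1}$ with $|C|_{\op}\leq 1$, taking the derivative of the inverse from the Behr--Dolzmann formula for Clarke derivatives where you use a.e.\ differentiability.

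The paper simply asserts $\mathrm{Lip}\big((f^r_{cu}\circ\tsig)^{-1}\big)\leq\sup|D(f^r_{cu}\circ\tsig)^{-1}|_{\op}$ on the non-convex tube, so your path-integration argument fills a step the paper leaves implicit. Note also that in both versions the block bounds are actually needed on $\nu S(4\sqrt{2}\,r)\supset\tsig(E_{cu}(4r))$ rather than on $\nu S(4r)$, which your estimates give just as easily.
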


\begin{proof}
    Since $r\mapsto f^r$ is continuous in the $C^1$ topology, each of the functions $(x,v,r)\mapsto|A_{11}^r(x,v)^{-1}|_{\op}-1$, $(x,v,r)\mapsto|A_{22}^r(x,v)|_{\op}-\kappa$ and $(x,v,r)\mapsto\max\{|A_{12}^r(x,v)|_{\op},|A_{21}^r(x,v)|_{\op}\}$ on $\nu S\times[0,r_{\max}]$ is continuous, hence so too is their pointwise minimum $\eta$. Since $\eta$ vanishes on the zero section $S_{0,\nu S}\times[0,r_{\max}]\subset\nu S\times[0,r_{\max}]$, $\eta^{-1}(-\infty,\varepsilon]$ is a closed neighbourhood of $S_{0,\nu S}\times[0,r_{\max}]$, which by compactness of $S$ must contain a set of the form $\overline{\nu S(R)}\times[0,r_{\max}]$ for some $R>0$. In particular, taking $r\leq \min\{(1/4)R,r_{\max}\}$ one obtains the first claim.

    To see the final claim, fix $\sigma\in\Sigma^{r}_{0}(1)$. Given $(x,v)\in E_{cu}(4r)$, write $(y,w):=(f^r_{cu}\circ\tsig)^{-1}(x,v)\in E_{cu}(4r)$ using the overflowing property, and observe using \cite{behr_dolzmann_ift} that
    \begin{align*}
    D(f^r_{cu}\circ&\tsig)^{-1}(x,v)\\ &= \CC\big(D(f^r_{cu}\circ\tsig)(y,w)^{-1}\big)\\&=\CC\big(\big(A_{11}^r(y,w,\sigma(y,w)) + A_{12}^r(y,w,\sigma(y,w))D\sigma(y,w)\big)^{-1}\big)\\&=\CC\big(\big(I+A_{11}^r(y,w,\sigma(y,w))^{-1}A_{12}^r(y,w,\sigma(y,w))D\sigma(y,w)\big)^{-1}A_{11}^r(y,w,\sigma(y,w))^{-1}\big),
    \end{align*}
    where $\CC$ denotes the convex hull and $D\sigma$ denotes the Clarke derivative of $\sigma$. Since for any $C\in D\sigma(y,w)$ one has $|C|_{\op}\leq 1$, one sees that
    \begin{align*}
    |D(f^r_{cu}\circ&\tsig)^{-1}(x,v)|_{\op}\\&\leq \sup_{C\in D\sigma(x,v)}|A_{11}^r(y,w,\sigma(y,w))^{-1}|_{\op}\,|(I+A_{11}^r(y,w,\sigma(y,w))^{-1}A_{12}^r(y,w,\sigma(y,w))C)^{-1}|_{\op}\\&\leq \frac{|A_{11}^r(y,w,\sigma(y,w))^{-1}|_{\op}}{1-|A_{11}^r(y,w,\sigma(y,w))^{-1}|_{\op}|A_{12}^r(y,w,\sigma(y,w))|_{\op}}.
    \end{align*}
    Since $\mathrm{range}(\sigma)\subset \nu S(4r)$, the uniform bounds derived above imply that
    \[
    \sup_{(x,v)\in E_{cu}(4\epsilon)}|D(f^r_{cu}\circ\tsig)^{-1}(x,v)|_{\op}\leq \frac{1+\varepsilon}{1-(1+\varepsilon)\varepsilon}< \frac{1+\varepsilon}{1-2\varepsilon}
    \]
    for all $0<\varepsilon<1/2$.
\end{proof}

\begin{theorem}\label{thm:invariant}
    For all positive $r$ sufficiently small, $f^r_{\#}$ is a contraction of $\Sigma^r_0(1)$; consequently, there exists a unique $\sigma_*\in\Sigma^r_0(1)$ satisfying $f^r_{\#}(\sigma_*) = \sigma_*$.
\end{theorem}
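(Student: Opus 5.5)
The proof follows the classical Hadamard graph transform scheme (cf. \cite[Theorem 5A.1]{hpsinvariant}), with the bounds of Lemma \ref{lem:globalbounds} replacing the usual hyperbolicity estimates. Fix $0<\varepsilon<1/2$, to be chosen small in terms of $\kappa$, and take $r$ small enough that Lemma \ref{lem:globalbounds} and the preceding proposition hold. Then for every $\sigma\in\Sigma^r_0(1)$ the inverse $h_\sigma:=(f^r_{cu}\circ\tsig)^{-1}:E_{cu}(4r)\rightarrow E_{cu}(4r)$ is well defined, and $\Lip(h_\sigma)\leq(1+\varepsilon)/(1-2\varepsilon)$.

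\textbf{Step 1: $f^r_{\#}$ maps $\Sigma^r_0(1)$ into itself.} Since $f^r$ fixes the zero section of $\nu S$ and $\sigma$ vanishes on $S_{0,E_{cu}}$, the map $f^r_{cu}\circ\tsig$ sends $S_{0,E_{cu}}$ to itself. Hence $h_\sigma$ does too, and $f^r_{\#}(\sigma)=f^r_s\circ\tsig\circ h_\sigma$ vanishes there. For the Lipschitz bound, use $\Lip(f^r_s\circ\tsig)\leq|A^r_{21}|+|A^r_{22}|\,\Lip(\sigma)\leq\varepsilon+\kappa+\varepsilon$. This gives
\[
\Lip(f^r_{\#}\sigma)\leq(\kappa+2\varepsilon)\frac{1+\varepsilon}{1-2\varepsilon},
\]
which is at most $1$ once $\varepsilon$ is small, since $\kappa<1$. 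One point needs care: these Lipschitz estimates use operator norms along segments, while $E_{cu}(4r)$ is a tube in a bundle with the Sasaki metric. I would either use geodesic convexity of small balls (the convexity radius is positive by compactness) together with a local-to-global Lipschitz argument, or use the Clarke-derivative bound from Lemma \ref{lem:globalbounds}. Continuity and boundedness are immediate, the latter because the range lies in $\nu S(4r)$.

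\textbf{Step 2: the contraction estimate.} Take $\sigma_1,\sigma_2\in\Sigma^r_0(1)$ and $z\in E_{cu}(4r)$, and set $z_i:=h_{\sigma_i}(z)$. Then
\[
f^r_{\#}\sigma_1(z)-f^r_{\#}\sigma_2(z)=\big[f^r_s(\tsig_1(z_1))-f^r_s(\tsig_2(z_1))\big]+\big[f^r_s(\tsig_2(z_1))-f^r_s(\tsig_2(z_2))\big].
\]
The first bracket is bounded by $(\kappa+\varepsilon)\|\sigma_1-\sigma_2\|$, and the second by $(\kappa+2\varepsilon)\,d(z_1,z_2)$.

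To control $d(z_1,z_2)$, use $f^r_{cu}(\tsig_1(z_1))=z=f^r_{cu}(\tsig_2(z_2))$. This gives
\[
f^r_{cu}(\tsig_2(z_1))-f^r_{cu}(\tsig_2(z_2))=f^r_{cu}(\tsig_2(z_1))-f^r_{cu}(\tsig_1(z_1)),
\]
whose norm is at most $\varepsilon\|\sigma_1-\sigma_2\|$ by the bound on $A^r_{12}$. Applying $h_{\sigma_2}$, whose Lipschitz constant is controlled by Lemma \ref{lem:globalbounds}, yields
\[
d(z_1,z_2)\leq\frac{(1+\varepsilon)\varepsilon}{1-2\varepsilon}\|\sigma_1-\sigma_2\|.
\]
Altogether the contraction constant is $\kappa+\varepsilon+(\kappa+2\varepsilon)(1+\varepsilon)\varepsilon/(1-2\varepsilon)$, which is $<1$ for $\varepsilon$ small. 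There is a subtlety here: $f^r_{cu}(\tsig_2(z_1))$ may fall outside $E_{cu}(4r)$. I would handle this with the globally defined extension used in the preceding proposition, on which $h_{\sigma_2}$ is defined on all of $E_{cu}$ with the same Lipschitz bound, since outside the tube the map agrees with $Df$.

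\textbf{Step 3: conclude.} Since $\Sigma^r_0(1)$ is closed in the Banach space $\Sigma^r$, it is complete, and the Banach fixed point theorem gives a unique fixed point $\sigma_*$.

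\textbf{Main obstacle.} I expect the hardest part to be making the linear-algebra estimates rigorous on a nontrivial bundle. Differences such as $z_1-z_2$ live in different fibres, so they must be handled in the trivialized picture $E^r=E_{cu}(4r)\times\RB^p$ with Sasaki distances on the $E_{cu}$ factor, and the mean value inequality must be justified via convexity or Clarke calculus. I would first try to reduce everything to local orthonormal trivializations over geodesically convex charts of diameter less than $c_S$, where all of these quantities are genuinely vector differences.
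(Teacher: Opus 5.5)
Your proposal is correct and follows the paper's proof essentially step for step. The paper also checks vanishing on the zero section, bounds the Clarke derivative of $f^r_{\#}(\sigma)$ by $(\kappa+2\varepsilon)(1+\varepsilon)/(1-2\varepsilon)$, and gets the contraction from the same triangle-inequality split together with $d(z_1,z_2)\leq\varepsilon\frac{1+\varepsilon}{1-2\varepsilon}\|\sigma_1-\sigma_2\|$, obtained by passing back through $f^r_{cu}\circ\tsig$. It inverts with $h_{\sigma_1}$ where you use $h_{\sigma_2}$, which is immaterial.

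The paper passes over both subtleties you flag without comment. For the second one, your claim is slightly too strong: the extended $h_{\sigma_2}$ does not satisfy the Lemma~\ref{lem:globalbounds} bound on all of $E_{cu}$, because in the Sasaki metric $Df|_{E_{cu}}$ has a horizontal-to-vertical derivative term of size $O(|v|)$. It does satisfy the bound on a fixed neighbourhood $E_{cu}(R)$, and once $r$ is small every point your estimate uses lies there, so the argument goes through.
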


\begin{proof}
    Fix $\sigma\in\Sigma^r_0(1)$. To show that $f^r_{\#}(\sigma)\in\Sigma^r_0(1)$, it suffices to show that $f^r_{\#}(\sigma)$ vanishes on the zero section and is 1-Lipschitz. That $f^r_{\#}(\sigma)$ vanishes on the zero section follows from the identity
    \[
    f^r_s\circ\tsig\circ(f^r_{cu}\circ\tsig)^{-1}(x,0) = f^r_s(x,0) = 0,\qquad\forall x\in S.
    \]
    To see that $f^r_{\#}(\sigma)$ is 1-Lipschitz, fix $0<\varepsilon<1/2$ so small that
    \begin{equation}\label{eq:varepsilonbound1}
    \frac{\kappa+2\varepsilon}{1-2\epsilon}<\frac{(\kappa+2\varepsilon)(1+\varepsilon)}{1-2\varepsilon}\leq 1,
    \end{equation}
    and take any $r$ sufficiently small that Lemma \ref{lem:globalbounds} holds. We prove that $f^r_{\#}(\sigma)$ is 1-Lipschitz by demonstrating that $\sup_{(x,v)\in E_{cu}(4r)}|Df^r_{\#}(\sigma)(x,v)|_{\op}\leq 1$, where $D$ denotes the Clarke derivative. For any $(x,v)\in E_{cu}(4r)$ the chain rule for the Clarke derivative gives
    \begin{align*}
        D&f^r_{\#}(\sigma)(x,v) = D\big(f^r_s\circ \tsig\circ (f^r_{cu}\circ\tsig)^{-1}\big)(x,v)\\&\subset \CC\{Df^r_s(\tsig(f^r_{cu}\circ\tsig)^{-1}(x,v))D\sigma((f^r_{cu}\circ\tsig^{-1}(x,v))D(f^r_{cu}\circ\tsig)^{-1}(x,v)\},
    \end{align*}
    where $\CC$ denotes the convex hull. Thus $Df^r_{\#}(\sigma)(x,v)$ is a convex combination of matrices of the form
    \begin{align}\label{eq:blockmatrix}
    \big(\widetilde{B}_{21}(x,v) + \widetilde{B}_{22}(x,v)\,\widetilde{C}\big)\big(B_{11}(x,v)+B_{12}(x,v)\,C\big)^{-1},
    \end{align}
    where, with $A_{ij}$ as in \eqref{eq:DF_notation}, $B_{ij}:=A_{ij}\circ\tsig$, $\widetilde{B}_{ij} = B_{ij}\circ(f^r_{cu}\circ\tsig)^{-1}$, $C\in D\sigma(x,v)$ and $\widetilde{C}\in D\sigma\big((f^r_{cu}\circ\tsig)^{-1}(x,v)\big)$. Consequently, to show that $\|Df^r_{\#}(\sigma)(x,v)\|\leq 1$, it suffices to show that the same is true for any matrix of the form \eqref{eq:blockmatrix}. Invoking \eqref{eq:varepsilonbound1}, however, any such matrix has norm bounded by
    \[
    \frac{(\kappa+2\varepsilon)(1+\varepsilon)}{1-2\varepsilon}\leq 1
    \]
    hence making $f^r_{\#}(\sigma)$ 1-Lipschitz as desired. We have thus proved that $f^r_{\#}$ maps $\Sigma^r_0(1)$ into itself.

    We conclude by proving that $f^r_{\#}$ is a contraction on $\Sigma^r_0(1)$, from which the existence of the unique fixed point follows by the Banach fixed point theorem. Fixing $\sigma,\sigma'\in\Sigma^r_0(1)$, for notational convenience set $\tsig:=\id_{cu}\times\sigma$, $\tsig':=\id_{cu}\times\sigma'$, $g:=(f^r_{cu}\circ\tsig)^{-1}$ and $g':=(f^r_{cu}\circ\tsig')^{-1}$. Observe that for any $(x,v)\in E_{cu}(4r)$, Lemma \ref{lem:globalbounds} implies that
    \begin{align}
        d_{E_{cu}(4r)}\big(g(x,v),g'(x,v)\big) &= d_{E_{cu}(4r)}\big(g\circ g^{-1}\circ g(x,v),g\circ g^{-1}\circ g'(x,v)\big)\nonumber\\&\leq \frac{1+\varepsilon}{1-2\varepsilon}\,d_{E_{cu}(4r)}\big(g^{-1}\circ g(x,v),g^{-1}\circ g'(x,v)\big)\nonumber\\&\leq\frac{1+\varepsilon}{1-2\varepsilon}d_{E_{cu}(4r)}\big((x,v),g^{-1}\circ g'(x,v)\big)\nonumber\\&\leq\frac{1+\varepsilon}{1-2\varepsilon}d_{E_{cu}(4r)}\big((g')^{-1}\circ g'(x,v),g^{-1}\circ g'(x,v)\big)\nonumber\\&\leq \frac{1+\varepsilon}{1-2\varepsilon}d_{E_{cu}(4r)}\big(f^r_{cu}(g'(x,v),\sigma'(g'(x,v))),f^r_{cu}(g'(x,v),\sigma(g'(x,v))\big)\nonumber\\&\leq\frac{1+\varepsilon}{1-2\varepsilon}\sup_{(x,v)\in E_{cu}(4r)}|A_{12}^r(x,v)|_{\op}\,\|\sigma-\sigma'\| \leq \varepsilon\frac{1+\varepsilon}{1-2\varepsilon}\|\sigma-\sigma'\|\label{eq:gbound1}
    \end{align}
    Then, invoking Lemma \ref{lem:globalbounds},
    \begin{align*}
        \|f^r_{\#}(\sigma)&-f^r_{\#}(\sigma')\|\\ &= \sup_{(x,v)\in E_{cu}(4r)}|f^r_s\circ\tsig\circ g(x,v) - f^r_s\circ\tsig'\circ g'(x,v)|\\&\leq \sup_{(x,v)\in E_{cu}(4r)}\bigg(|f^r_s\circ\tsig\circ g(x,v) - f^r_s\circ\tsig'\circ g(x,v)| +|f^r_s\circ\tsig'\circ g(x,v)-f^r_s\circ\tsig'\circ g'(x,v)|\bigg)\\&\leq \mathrm{Lip}(f^r_s|_{\nu S(4r)})\,\|\sigma-\sigma'\| + \mathrm{Lip}(f^r_s\circ\tsig')\sup_{(x,v)\in E_{cu}(4r)}d_{E_{cu}(4r)}(g(x,v),g'(x,v))\\&\leq (\kappa+2\epsilon)\bigg(1+\varepsilon\frac{1+\varepsilon}{1-2\varepsilon}\bigg)\|\sigma-\sigma'\|\\&\leq \frac{\kappa+2\epsilon}{1-2\epsilon}\|\sigma-\sigma'\|
    \end{align*}
    where on fifth line we have invoked Lemma \ref{lem:globalbounds} and \eqref{eq:gbound1}.
    Since $(\kappa+2\varepsilon)/(1-2\varepsilon)<1$ by \eqref{eq:varepsilonbound1}, $f^r_{\#}$ is a contraction of $\Sigma^r_0(1)$ as claimed.
\end{proof}

\section{Regularity of the invariant section}\label{sec:regularity}

We now demonstrate that the invariant function found in Theorem \ref{thm:invariant} is $C^k$. We adopt the technique introduced by \cite{center_regularity}, which uses closedness of differentiable functions with Lipschitz derivatives in spaces of continuous functions with the supremum norm.

In our setting, recall from Section \ref{sec:notation} that given a $C^k$ function $\sigma:E_{cu}(4r)\rightarrow \RB^p$ and $1\leq j\leq k$, the derivative $D^j\sigma$ is a section of the bundle $\mathrm{Sym}^j(TE_{cu}(4r),\RB^p)\rightarrow E_{cu}(4r)$, whose fibre over $(x,v)\in E_{cu}(4r)$ is the vector space $\mathrm{Sym}^j(T_{(x,v)}E_{cu}(4r),\RB^p)$ of symmetric, $j$-multilinear maps $T_{(x,v)}E_{cu}(4r)\rightarrow \RB^p$, any element of which is equipped with the multilinear operator norm
\[
|A|_{j,\op}:=\sup_{w_1,\dots,w_k\in \overline{T_{(x,v)}E_{cu}(4r)(1)}}|A[v_1,\dots,v_j]|.
\]
As such, $D^j\sigma$ admits the norm
\[
\|D^j\sigma\|_{j,\op}:=\sup_{(x,v)\in E_{cu}(4r)}|D^j\sigma(x,v)|_{j,\op}.
\]
A-priori, $D^k\sigma$ is only continuous. However, noting that if $c_{E_{cu}(4r)}>0$ is the convexity radius of $E_{cu}(4r)$ (which is positive since $E_{cu}(4r)$ is the interior of the compact manifold $\overline{E_{cu}(4r)}$) then it may be that there is $L\geq 0$ such that
\[
\sup_{(x,v)\in E_{cu}(4r)}\sup_{(y,w)\in B((x,v),c_{E_{cu}(4r)})}\frac{|D^k\sigma(y,w) - P((x,v),(y,w))D^k\sigma(x,v)|_{k,\op}}{d_{E_{cu}(4r)}\big((x,v),(y,w)\big)}\leq L.
\]
This being so, $D^k\sigma$ is declared to be \emph{Lipschitz} and $\mathrm{Lip}(D^k\sigma)$ is the infimum over all $L$ satisfying this property. We refer the reader to Proposition \ref{prop:derivsclosed} in the appendix for a proof of the following result, which generalises closedness of $\Sigma^r_0(1)$ in $\Sigma^r$.

\begin{proposition}
    Given any tuple $(b_1,\dots,b_{k+1})$ of positive numbers, the subset $\Sigma^r_0(b_1,\dots,b_{k+1})$ of $\Sigma$ consisting of $C^k$ functions $\sigma:E_{cu}(4r)\rightarrow\RB^p$ with
    \[
    \|D^j\sigma\|_{j,\op}\leq b_j,\quad\forall 1\leq j\leq k,\qquad \mathrm{Lip}(D^k\sigma)\leq b_{k+1}
    \]
    is closed in $\Sigma^r$.
\end{proposition}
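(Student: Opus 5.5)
The plan is the standard Arzel\`a--Ascoli argument, localised to charts, in the spirit of \cite{center_regularity}. Take a sequence $\sigma_n\in\Sigma^r_0(b_1,\dots,b_{k+1})$ converging in the supremum norm to some $\sigma\in\Sigma^r$. We must show that $\sigma$ is $C^k$ and obeys the same bounds. Since all the quantities involved are defined pointwise or on geodesic balls of radius $c_{E_{cu}(4r)}$, I will fix a finite atlas of $\overline{E_{cu}(4r)}$ by geodesically convex charts of radius less than $c_{E_{cu}(4r)}$, adapted to a local orthonormal trivialisation of $E_{cu}$ over a convex chart of $S$. In each chart, the covariant derivatives $D^j\sigma_n$ (taken with the Levi-Civita connection of the Sasaki metric) differ from the coordinate derivatives $\partial^j\sigma_n$ by terms that are polynomial in the lower coordinate derivatives, with smooth coefficients bounded on the compact closure. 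Parallel transport is likewise a smooth, uniformly bi-Lipschitz matrix-valued function of the pair of points. It follows that the hypotheses give uniform bounds on $\partial^j\sigma_n$ for $j\le k$, and a uniform Lipschitz bound on $\partial^k\sigma_n$ on each chart domain.

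Next I apply Arzel\`a--Ascoli chart by chart. For each $j\le k$ the family $\{\partial^j\sigma_n\}$ is uniformly bounded and equicontinuous. For $j<k$ this follows from the bound on $\partial^{j+1}\sigma_n$ and the mean value theorem on the convex chart domain; for $j=k$ it follows from the Lipschitz bound. After passing to a diagonal subsequence, $\partial^j\sigma_n\to g_j$ locally uniformly for every $j$. The classical theorem on uniform limits of derivatives then gives, inductively, that $\sigma\in C^k$ with $\partial^j\sigma=g_j$. This works because uniform convergence of $\sigma_n$ together with uniform convergence of $\partial\sigma_n$ on convex sets implies differentiability of the limit, with the limit derivative. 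The limit $\sigma$ is unique, so the whole sequence of derivatives converges, not just the subsequence.

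Finally, the bounds pass to the limit pointwise. For fixed $(x,v)$ and unit vectors $w_1,\dots,w_j$ we have $|D^j\sigma_n(x,v)[w_1,\dots,w_j]|\le b_j$, and the left side converges, because $D^j\sigma_n\to D^j\sigma$ pointwise (a consequence of the convergence of the $\partial^i\sigma_n$, $i\le j$). Taking suprema gives $\|D^j\sigma\|_{j,\op}\le b_j$. Similarly, for fixed $(x,v)$ and $(y,w)$ at distance less than $c_{E_{cu}(4r)}$, the difference quotient
\[
\frac{|D^k\sigma_n(y,w)-P((x,v),(y,w))D^k\sigma_n(x,v)|_{k,\op}}{d_{E_{cu}(4r)}\big((x,v),(y,w)\big)}
\]
converges to the corresponding quotient for $\sigma$ and is bounded by $b_{k+1}$. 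Hence $\mathrm{Lip}(D^k\sigma)\le b_{k+1}$. Vanishing on the zero section, if it is part of the definition of $\Sigma^r_0$, passes trivially to uniform limits.

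The main obstacle is the first step: turning the intrinsic bounds (covariant derivatives, Lipschitz continuity via parallel transport over the non-trivial bundle $E_{cu}$) into uniform coordinate estimates, and back again. This is necessary because the limit argument is only transparent in coordinates. I would handle it by writing the Christoffel symbols of the Sasaki metric and the parallel transport operator explicitly in the adapted trivialisation and using compactness of $\overline{E_{cu}(4r)}$ to bound all of them uniformly. Points near the boundary $|v|=4r$ need no special care, because the convex charts are taken in the closure and the sup-norm estimates are uniform.
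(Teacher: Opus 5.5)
Your proof is correct, but it takes a different route from the paper's.

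\textbf{The paper's route.} The paper's proof (Proposition~\ref{prop:derivsclosed}, after \cite[Lemma 4.2]{center_regularity}) uses neither charts nor compactness. It Taylor-expands along the geodesic $h\mapsto\exp_x(hv)$, with parallel transport, to get the interpolation inequality
\[
|D\eta(x)|_{\op}\le \frac{2\|\eta\|}{h}+\mathrm{Lip}(D\eta)\,h .
\]
It applies this to $\eta=\sigma_n-\sigma_m$, where $\mathrm{Lip}(D\eta)\le 2b_2$, and optimises in $h$ to obtain $\|D\sigma_n-D\sigma_m\|_{1,\op}\le 2\sqrt{b_2}\,\|\sigma_n-\sigma_m\|^{1/2}$. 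So $D\sigma_n$ is uniformly Cauchy, $\sigma$ is $C^1$ with $D\sigma_n\to D\sigma$ uniformly, and the bounds pass to the limit. Higher $k$ is handled by iterating the same estimate on the $D^j\sigma_n$.

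That argument is intrinsic and quantitative. It needs no Christoffel-symbol bookkeeping and no subsequence extraction, and it gives an explicit H\"older-type rate. However, it needs geodesics of a fixed length $h$ in every direction from every point. For the open tube $E_{cu}(4r)$ near $|v|=4r$, this rests on the paper's bare assertion that the convexity radius is positive.

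\textbf{Your route.} You need only straight segments inside Euclidean-convex product patches $U\times B(4r)$ (or half-ball versions over $\partial S$), via the mean value theorem and the classical limit-of-derivatives theorem. So your argument really does need no special care at the edge of the tube or at $\partial S$. The costs are:
\begin{itemize}
\item the two-way translation between covariant and coordinate derivatives, and between parallel-transport and coordinate Lipschitz bounds, which you correctly flag as the main technical step;
\item the non-constructive Arzel\`a--Ascoli extraction.
\end{itemize}
For closedness alone, your sub-subsequence step is unnecessary: the bounds $b_1,\dots,b_{k+1}$ already pass to the limit along the extracted subsequence.
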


Fix $k\in\NB$. We prove that $f^r_{\#}$ maps $\Sigma^r_0(1,b_2,\dots,b_{k+1})$ into itself for an appropriate choice of $b_2,\dots,b_{k+1}>0$, thus giving that $\sigma_*$ of Theorem \ref{thm:invariant} is $C^k$. Given a $l$-times differentiable function $h:X\rightarrow Y$ between Riemannian manifolds $X$ and $Y$, let $J^lh = (h,Dh,\dots,D^lh)$ denote its $l$-jet\footnote{This is really the \emph{Riemannian} $l$-jet, since the higher derivatives $D^jh$ for $j\geq 2$ depend on the choice of Riemannian metric.}. First, a lemma bounding higher derivatives of $g_{\sigma}:=(f^r_{cu}\circ\tsig)^{-1}$ given a differentiable $\sigma\in\Sigma$. 

\begin{lemma}\label{lem:Dlgbounds}
    Let $0<\varepsilon<1/2$ and suppose that $r$ satisfies Lemma \ref{lem:globalbounds}. Let $\sigma\in\Sigma^r_0(1)$ be $l$-times differentiable. Then:
    \[
    \|D^lg_{\sigma}\|_{l,\op}\leq \|D^l\sigma\|_{l,\op}\,\varepsilon\bigg(\frac{1+\varepsilon}{1-2\varepsilon}\bigg)^{l+1} + C_l
    \]
    for some constant $C_l$ depending only on $J^{l-1}\sigma$ and $J^{l-1}f^r_{cu}$.
\end{lemma}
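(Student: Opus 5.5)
We differentiate the identity $h_\sigma\circ g_\sigma=\id_{cu}$, where $h_\sigma:=f^r_{cu}\circ\tsig$, $l$ times, and isolate the single term containing $D^l\sigma$. Since $\sigma$ is $l$-times differentiable and $f^r$ is $C^\infty$, $h_\sigma$ is $l$-times differentiable. Lemma \ref{lem:globalbounds} gives $Dh_\sigma$ invertible with $|Dh_\sigma^{-1}|_{\op}\le (1+\varepsilon)/(1-2\varepsilon)$, so the inverse function theorem makes $g_\sigma$ $l$-times differentiable with $Dg_\sigma=(Dh_\sigma\circ g_\sigma)^{-1}$. We argue by induction on $l$, the case $l=1$ being Lemma \ref{lem:globalbounds} itself. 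For $l=1$ the constant $C_1=(1+\varepsilon)/(1-2\varepsilon)$ already dominates, since $\|D\sigma\|\le1$.

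\textbf{Step 1 (derivatives of $h_\sigma$).} The chain rule (Faà di Bruno, with Levi-Civita covariant derivatives) gives
\[
D^lh_\sigma = A^r_{12}\circ\tsig\,[D^l\sigma] + Q_l,
\]
where $Q_l$ is a universal polynomial in $D^jf^r_{cu}\circ\tsig$ ($1\le j\le l$) and $D^j\sigma$ ($j\le l-1$). Curvature terms from the Riemannian structure of $E_{cu}(4r)$ are also absorbed into $Q_l$; they involve only lower-order jets.

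Here we must check that $Q_l$ contains no $D^l\sigma$ apart from the displayed term. The only place $D^l\sigma$ can appear is through $D f^r_{cu}$ applied to $D^l\tsig=(0,D^l\sigma)$ in the $\RB^p$-component. That contribution is exactly $A^r_{12}[D^l\sigma]$, and Lemma \ref{lem:globalbounds} bounds it by $\varepsilon\|D^l\sigma\|$.

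\textbf{Step 2 (inverse).} Differentiating $Dh_\sigma(g_\sigma)\,Dg_\sigma=I$ a further $l-1$ times, Faà di Bruno gives
\[
D^lg_\sigma = -\,Dg_\sigma\cdot\Big(D^lh_\sigma\circ g_\sigma\,[Dg_\sigma^{\otimes l}] + R_l\Big),
\]
where $R_l$ involves $D^jh_\sigma\circ g_\sigma$ for $2\le j\le l-1$ and $D^ig_\sigma$ for $i\le l-1$.

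By the induction hypothesis, each such $D^ig_\sigma$ is bounded in terms of $J^{i}\sigma$ and $J^{i}f^r_{cu}$, with $i\le l-1$, hence in terms of $J^{l-1}\sigma$ and $J^{l-1}f^r_{cu}$. By Step 1, each such $D^jh_\sigma$ is bounded in terms of $J^{j}\sigma$ and $J^{j}f^r_{cu}$, with $j\le l-1$.

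Taking norms, the factor $|Dg_\sigma|^{l+1}\le\big((1+\varepsilon)/(1-2\varepsilon)\big)^{l+1}$ multiplies $\varepsilon\|D^l\sigma\|$. Everything else is collected into $C_l$.

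\textbf{Main obstacle.} The main difficulty is bookkeeping. Step 1's $Q_l$ contains $D^lf^r_{cu}$ applied to first derivatives, which is an $l$-th derivative of $f^r_{cu}$ rather than an element of $J^{l-1}f^r_{cu}$. I would handle this by reading "depending only on $J^{l-1}$" as allowing bounded dependence on $\|D^lf^r\|$. Alternatively, I would note that $f^r$ is $C^\infty$ on the compact closure of $\nu S(4r)$, so all its derivatives are fixed finite constants independent of $\sigma$.

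The essential point is that $C_l$ is independent of $D^l\sigma$. That independence is what the subsequent invariance argument for $\Sigma^r_0(1,b_2,\dots)$ needs, since $\varepsilon((1+\varepsilon)/(1-2\varepsilon))^{l+1}$ can be made less than $1$ by shrinking $\varepsilon$.
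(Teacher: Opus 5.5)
Your proposal is correct and follows essentially the paper's argument. You differentiate $h_\sigma\circ g_\sigma=\id_{cu}$, isolate the unique top-order term $A^r_{12}\circ\tsig\,[D^l\sigma]$ in $D^lh_\sigma$, and bound it using $|A^r_{12}|_{\op}\le\varepsilon$ and $|Dg_\sigma|_{\op}\le(1+\varepsilon)/(1-2\varepsilon)$ from Lemma \ref{lem:globalbounds}. Your explicit induction on the lower-order $D^ig_\sigma$ just makes precise what the paper folds into its polynomial $R_l$.

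Your remark on the jet order is also right. The paper's own $R_l$ and $\widetilde{R}_l$ depend on $J^{l-1}A^r_{11}$ and $J^{l-1}A^r_{12}$, i.e.\ on $J^lf^r_{cu}$. This is harmless since $f^r$ is a fixed smooth map, and it matches the $J^lf^r$ dependence stated in Lemma \ref{lem:DlFsharp}.
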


\begin{proof}
    Write $h_{\sigma}:=f^r_{cu}\circ\tsig$, so that $g_{\sigma} = h_{\sigma}^{-1}$. Recalling the notation of Lemma \ref{lem:globalbounds} and applying the chain rule to the identity $h_{\sigma}\circ g_{\sigma} = \mathrm{id}_{cu}$, one sees that
    \[
    Dg_{\sigma} = Dh_{\sigma}|_{g_{\sigma}}^{-1}
    \]
    and, for $l\geq 2$,
    \[
    D^lg_{\sigma} = -Dg_{\sigma}\,D^lh_{\sigma}|_{g_{\sigma}}[Dg_{\sigma}^{\otimes l}] + R_l(Dg_{\sigma},J^{l-1}\sigma,J^{l-1}A_{11},J^{l-1}A_{12}),
    \]
    for some polynomial $R_l$. In the notation of Lemma \ref{lem:globalbounds} one furthermore has
    \[
    Dh_{\sigma} = D(f^r_{cu}\circ\tsig) = A_{11}|_{\tsig} + A_{12}|_{\tsig}D\sigma,
    \]
    so that, for $l\geq 2$,
    \[
    D^lh_{\sigma} = A_{12}|_{\tsig}\,D^l\sigma + \widetilde{R}_l(J^{l-1}\sigma,J^{l-1}A_{11},J^{l-1}A_{12})
    \]
    for some polynomial function $\widetilde{R}_l$. Thus, by Lemma \ref{lem:globalbounds},
    \[
    \|D^lg_{\sigma}\|_{l,\op}\leq \|D^l\sigma\|_{l,\op}\,\varepsilon\bigg(\frac{1+\varepsilon}{1-2\varepsilon}\bigg)^{l+1} + C_l
    \]
    where
    \[
    C_l:=\|R_l(Dg_{\sigma},J^{l-1}\sigma,J^{l-1}A_{11},J^{l-1}A_{22})\|_{l,\op} + \|\widetilde{R}_l(J^{l-1}\sigma,J^{l-1}A_{11},J^{l-1}A_{12})\|_{l,\op}.
    \]
\end{proof}

We next bound the higher derivatives of $f^r_{\#}(\sigma)$ given differentiable $\sigma\in\Sigma$.

\begin{lemma}\label{lem:DlFsharp}
    Let $0<\varepsilon<1/2$ and suppose that $r$ satisfies Lemma \ref{lem:globalbounds}. For any $l$-times differentiable section $\sigma\in\Sigma^r_0(1)$, one has
    \[
    \|D^lf^r_{\#}(\sigma)\|_{l,\op}\leq \frac{(\kappa+2\varepsilon)(1+\varepsilon)^{l}}{(1-2\varepsilon)^{l+1}}\|D^l\sigma\|_{l,\op} + \widetilde{C}_l,
    \]
    where $\widetilde{C}_l$ is a constant depending only on $J^{l-1}\sigma$ and $J^{l}f^r$.
\end{lemma}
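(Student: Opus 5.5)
We mirror the proof of Lemma \ref{lem:Dlgbounds}. Write $f^r_{\#}(\sigma) = f^r_s\circ\tsig\circ g_{\sigma}$ with $g_{\sigma}=(f^r_{cu}\circ\tsig)^{-1}$, and set $k_{\sigma}:=f^r_s\circ\tsig$. In the block notation \eqref{eq:DF_notation} this gives $Dk_{\sigma} = A_{21}^r|_{\tsig} + A_{22}^r|_{\tsig}D\sigma$. Differentiating $l$ times with the chain and product rules yields, for $l\geq 2$,
\[
D^lk_{\sigma} = A_{22}^r|_{\tsig}\,D^l\sigma + \widehat{R}_l(J^{l-1}\sigma,J^{l-1}A_{21}^r,J^{l-1}A_{22}^r)
\]
for a polynomial $\widehat{R}_l$. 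The point is that $D^l\sigma$ appears only once, linearly, multiplied by $A_{22}^r$. The case $l=1$ is already covered by the argument in Theorem \ref{thm:invariant}, so we may take $l\geq 2$.

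Next, apply the Faà di Bruno formula to $k_{\sigma}\circ g_{\sigma}$:
\[
D^lf^r_{\#}(\sigma) = D^lk_{\sigma}|_{g_{\sigma}}[Dg_{\sigma}^{\otimes l}] + Dk_{\sigma}|_{g_{\sigma}}\,D^lg_{\sigma} + Q_l\big(J^{l-1}k_{\sigma}|_{g_{\sigma}},J^{l-1}g_{\sigma}\big),
\]
where $Q_l$ collects all terms involving only derivatives of order between $2$ and $l-1$ of $k_\sigma$ and $g_\sigma$. By Lemma \ref{lem:globalbounds}, $|A_{22}^r|_{\op}\leq\kappa+\varepsilon$ and $|A_{21}^r|_{\op}\leq\varepsilon$. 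Together with $|D\sigma|\leq 1$ this gives $|Dk_{\sigma}|_{\op}\leq\kappa+2\varepsilon$ and $|Dg_{\sigma}|_{\op}\leq (1+\varepsilon)/(1-2\varepsilon)$. The first term is therefore bounded by
\[
(\kappa+\varepsilon)\bigg(\frac{1+\varepsilon}{1-2\varepsilon}\bigg)^l\|D^l\sigma\|_{l,\op}
\]
plus a constant. For the second term, Lemma \ref{lem:Dlgbounds} gives a bound of
\[
(\kappa+2\varepsilon)\,\varepsilon\bigg(\frac{1+\varepsilon}{1-2\varepsilon}\bigg)^{l+1}\|D^l\sigma\|_{l,\op}
\]
plus a constant. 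The lower-order derivatives $D^jg_{\sigma}$ with $j<l$ are controlled by $J^{l-1}\sigma$ and $J^{l-1}f^r$, by the recursive formula in the proof of Lemma \ref{lem:Dlgbounds}. All the remaining terms are absorbed into $\widetilde{C}_l$, which depends on $J^{l-1}\sigma$ and $J^lf^r$; here $J^l f^r$ is needed because $J^{l-1}A^r_{ij}$ involves $l$ derivatives of $f^r$.

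The main obstacle is to combine the two leading coefficients into the stated constant $\frac{(\kappa+2\varepsilon)(1+\varepsilon)^l}{(1-2\varepsilon)^{l+1}}$. Their sum equals
\[
\frac{(1+\varepsilon)^l}{(1-2\varepsilon)^{l+1}}\Big[(\kappa+\varepsilon)(1-2\varepsilon) + (\kappa+2\varepsilon)\varepsilon(1+\varepsilon)\Big].
\]
The bracket is $\kappa+\varepsilon-\kappa\varepsilon+2\varepsilon^3$, up to the $\varepsilon^2$ terms, which cancel. This is at most $\kappa+2\varepsilon$ whenever $\kappa\varepsilon\geq 2\varepsilon^3-\varepsilon$. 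That inequality holds automatically, since $2\varepsilon^2<1/2<1$ for $\varepsilon<1/2$. If the bookkeeping of cross terms turns out more delicate than expected, for instance from the Riemannian-connection corrections in the higher derivatives on $E_{cu}(4r)$, the fallback is to shrink $\varepsilon$ and absorb any extra $O(\varepsilon)$ factor. These corrections involve only lower-order jets, so they go into $\widetilde{C}_l$.
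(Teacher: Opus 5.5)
Your proposal is correct and is essentially the paper's argument: expand $D^l(f^r_s\circ\tsig\circ g_{\sigma})$ by the chain rule and keep only the two terms carrying $D^l\sigma$, namely $Df^r_s$ acting on $D^l\sigma[Dg_{\sigma}^{\otimes l}]$ and $D(f^r_s\circ\tsig)\,D^lg_{\sigma}$; then feed in Lemma \ref{lem:Dlgbounds} and combine the coefficients, which the paper does more quickly by using $\kappa+2\varepsilon$ for both terms together with $1+\varepsilon\frac{1+\varepsilon}{1-2\varepsilon}\leq\frac{1}{1-2\varepsilon}$. One small slip: your bracket is really $\kappa+\varepsilon-\kappa\varepsilon+\kappa\varepsilon^2+2\varepsilon^3$, since the $\kappa\varepsilon^2$ term does not cancel, but the required inequality $\kappa\varepsilon(1-\varepsilon)\geq 2\varepsilon^3-\varepsilon$ still holds trivially because its right-hand side is negative for $\varepsilon<1/2$.
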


\begin{proof}
    We have already seen in Theorem \ref{thm:invariant} that $\|D(f^r_s\circ\tsig\circ g_{\sigma})\|_{\op}\leq 1$ for any $\sigma\in\Sigma^r_0(1)$. Suppose now that $l\geq 2$. Then since $\|D^l\tsig\|_{l,\op} = \|D^l\sigma\|_{l,\op}$, by the arguments of \cite[Lemma 3.3]{center_regularity} one has
    \begin{align*}
    \|D^l&(f^r_s\circ\tsig\circ g_{\sigma})\|_{l,\op}\\&\leq \|D^lf^r_s\|_{l,\op}\|D\tsig\|_{\op}^l\|Dg_{\sigma}\|_{\op}^l + \|Df^r_s\|_{\op}\|D^l\sigma\|_{l,\op}\|Dg_{\sigma}\|_{\op}^l + \|D(f^r_s\circ\tsig)\|_{\op}\|D^lg_{\sigma}\|_{l,\op} + C_l,
    \end{align*}
    where $C_l$ is a constant depending only on $J^{l-1}f^r_s$, $J^{l-1}\sigma$ and $J^{l-1}g_{\sigma}$. Applying Lemma \ref{lem:Dlgbounds} and $\|Df^r_s\|_{\op},\|D(f^r_s\circ\tsig)\|_{\op}\leq \kappa+2\varepsilon$, one thus has
    \begin{align*}
    \|D^lf^r_{\#}(\sigma)\|_{l,\op}&\leq (\kappa+2\varepsilon)\bigg(\bigg(\frac{1+\varepsilon}{1-2\varepsilon}\bigg)^l + \varepsilon\bigg(\frac{1+\varepsilon}{1-2\varepsilon}\bigg)^{l+1}\bigg)\|D^l\sigma\|_{l,\op} + \widetilde{C}_l\\&\leq(\kappa+2\varepsilon)\bigg(\frac{1+\varepsilon}{1-2\varepsilon}\bigg)^l\bigg(1+\frac{2\varepsilon}{1-2\varepsilon}\bigg)\|D^l\sigma\|_{l,\op} + \widetilde{C}_l\\&\leq \frac{(\kappa+2\varepsilon)(1+\varepsilon)^l}{(1-2\varepsilon)^{l+1}}\|D^l\sigma\|_{l,\op} + \widetilde{C}_l,
    \end{align*}
    where $\widetilde{C}_l$ is a constant depending only on $J^{l-1}\sigma$ and $J^lf^r$. 
\end{proof}

Now we can prove the regularity of the invariant $\sigma_*$ found in Theorem \ref{thm:invariant}

\begin{theorem}\label{thm:regular}
    Fix $k\in\NB$, and suppose that $r$ satisfies Lemma \ref{lem:globalbounds} with $\varepsilon$ so small that
    \begin{equation}\label{eq:varepsilonbound2}
    \frac{(\kappa+2\varepsilon)(1+\varepsilon)^{k+1}}{(1-2\varepsilon)^{k+2}}<1.
    \end{equation}
    Then there exist $b_2,\dots,b_{k+1}>0$ such that $f^r_{\#}\big(\Sigma^r_0(1,b_2,\dots,b_{k+1})\big)\subset\Sigma^r_0(1,b_2,\dots,b_{k+1})$. Consequently, the invariant function $\sigma_*$ from Theorem \ref{thm:invariant} is $C^k$ and $D^k\sigma$ is Lipschitz.
\end{theorem}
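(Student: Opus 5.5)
The plan is an induction on the derivative order $l$, choosing $b_2,\dots,b_{k+1}$ one at a time, followed by the standard closedness argument in the spirit of \cite{center_regularity}.

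Write $\theta_l:=(\kappa+2\varepsilon)(1+\varepsilon)^{l}/(1-2\varepsilon)^{l+1}$. These are increasing in $l$, and \eqref{eq:varepsilonbound2} gives $\theta_l<1$ for all $l\le k+1$.

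Suppose $b_2,\dots,b_{l-1}$ have already been chosen. By construction, $f^r_{\#}$ maps $\Sigma^r_0(1,b_2,\dots,b_{l-1})$ into itself at the level of derivatives up to order $l-1$. For $\sigma$ in this set, the jet $J^{l-1}\sigma$ is bounded by $(1,b_2,\dots,b_{l-1})$. Moreover $J^lf^r$ is bounded uniformly on $\nu S(4r)$, since $f^r$ is $C^\infty$ there and the domain is relatively compact. Hence the constant $\widetilde C_l$ of Lemma \ref{lem:DlFsharp} can be bounded by a number $K_l$ depending only on $b_2,\dots,b_{l-1}$ and $f^r$, not on $\sigma$. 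The lemma then gives
\[
\|D^lf^r_{\#}(\sigma)\|_{l,\op}\le \theta_l\|D^l\sigma\|_{l,\op}+K_l .
\]
Choosing $b_l\ge K_l/(1-\theta_l)$ makes $\|D^l\sigma\|_{l,\op}\le b_l$ imply $\|D^lf^r_{\#}(\sigma)\|_{l,\op}\le b_l$. We also need $f^r_{\#}(\sigma)$ to be $C^l$ when $\sigma$ is $C^l$. This holds because $g_\sigma$ is the inverse of the $C^l$ map $f^r_{cu}\circ\tsig$, whose derivative is invertible by Lemma \ref{lem:globalbounds}; the classical inverse function theorem applies once $\sigma$ is $C^1$.

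The top step, the Lipschitz constant of $D^k$, is the main obstacle. Here Lemma \ref{lem:DlFsharp} must be replaced by an analogous estimate for $\mathrm{Lip}(D^k f^r_{\#}(\sigma))$. I would differentiate the Faà di Bruno expressions for $D^k(f^r_s\circ\tsig\circ g_\sigma)$ and $D^kg_\sigma$ from Lemma \ref{lem:Dlgbounds} once more in the difference-quotient sense, using parallel transport as in the definition of $\mathrm{Lip}(D^k\sigma)$. The only term involving $\mathrm{Lip}(D^k\sigma)$ linearly at top order appears with the same coefficient structure, with one extra factor of $\mathrm{Lip}(g_\sigma)\le(1+\varepsilon)/(1-2\varepsilon)$. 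This gives
\[
\mathrm{Lip}(D^kf^r_{\#}(\sigma))\le\theta_{k+1}\,\mathrm{Lip}(D^k\sigma)+K_{k+1},
\]
where $K_{k+1}$ depends on $b_2,\dots,b_k$ and on $J^{k+1}f^r$. The latter is finite because $f^r$ is $C^\infty$ on the closure of $\nu S(4r)$. This is exactly why \eqref{eq:varepsilonbound2} uses the exponent $k+1$. We then take $b_{k+1}\ge K_{k+1}/(1-\theta_{k+1})$. The delicate point is checking that the curvature and parallel-transport correction terms, which arise because these are Riemannian derivatives on $E_{cu}(4r)$, only enter $K_{k+1}$. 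They involve at most $D^k\sigma$, not its Lipschitz constant, so they are lower order.

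Finally, set $\Sigma':=\Sigma^r_0(1,b_2,\dots,b_{k+1})$. It is nonempty, since it contains $\sigma\equiv0$, and it is invariant under $f^r_{\#}$. It is also contained in $\Sigma^r_0(1)$, and by the preceding Proposition it is closed in $\Sigma^r$. The iterates $f^{r,n}_{\#}(0)$ lie in $\Sigma'$, and they converge in the supremum norm to $\sigma_*$ by the contraction of Theorem \ref{thm:invariant}. Closedness then gives $\sigma_*\in\Sigma'$, so $\sigma_*$ is $C^k$ with $D^k\sigma_*$ Lipschitz.
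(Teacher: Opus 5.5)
Your argument is correct in outline and follows the paper's skeleton: $b_2,\dots,b_k$ are chosen recursively from Lemma \ref{lem:DlFsharp} with $b_l\ge \widetilde{C}_l/(1-\theta_l)$, and the conclusion comes from closedness of $\Sigma^r_0(1,b_2,\dots,b_{k+1})$ together with the contraction of Theorem \ref{thm:invariant}.

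Where you differ is the top step, the bound on $\mathrm{Lip}(D^kf^r_{\#}(\sigma))$. The paper proves no Lipschitz-level estimate. It applies Lemma \ref{lem:DlFsharp} at order $l=k+1$ to $(k+1)$-times differentiable $\sigma$, so that $\mathrm{Lip}(D^k\,\cdot)$ is controlled through $\|D^{k+1}\,\cdot\|_{k+1,\op}$ with coefficient $\theta_{k+1}$. It then extends to all of $\Sigma^r_0(1,b_2,\dots,b_{k+1})$ by asserting that such sections are dense, and using closedness and sup-norm continuity of $f^r_{\#}$.

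You instead propose a direct difference-quotient analogue of Lemma \ref{lem:DlFsharp}, and the top-order bookkeeping does close. Two terms carry $\mathrm{Lip}(D^k\sigma)$: $A_{22}\,D^k\sigma$ composed with $g_\sigma$, and $\mathrm{Lip}(D^kg_\sigma)$ through $A_{12}\,D^k\sigma$. Together they contribute at most
\[
(\kappa+\varepsilon)\Big(\tfrac{1+\varepsilon}{1-2\varepsilon}\Big)^{k+1}+(\kappa+2\varepsilon)\,\varepsilon\Big(\tfrac{1+\varepsilon}{1-2\varepsilon}\Big)^{k+2}\le\theta_{k+1},
\]
for every $k\ge 1$, including $k=1$ where the two Fa\`a di Bruno terms coincide.

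What your route buys is genuine invariance of the whole set with no approximation step. This matters because the paper's density claim is not immediate: mollification does not exactly preserve $\|D\sigma\|_{1,\op}\le 1$, vanishing on the zero section, and $\mathrm{Lip}(D^k\sigma)\le b_{k+1}$. The price is the Riemannian composition estimates you flag. In particular, $g_\sigma$ can stretch a pair of points within the convexity radius $c_{E_{cu}(4r)}$ to a pair beyond it. So $\mathrm{Lip}(D^k\sigma\circ g_\sigma)\le\mathrm{Lip}(D^k\sigma)\,\mathrm{Lip}(g_\sigma)$ needs a subdivision and holonomy argument rather than being automatic.

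Your final step, iterating from $\sigma\equiv0$, also gives a cheaper repair of the paper's version. If $f^r$ is $C^{k+1}$, every iterate $(f^r_{\#})^n(0)$ is $(k+1)$-times differentiable. The paper's $D^{k+1}$ estimate then applies along the orbit, and closedness finishes without any density argument.

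Finally, both routes need $J^{k+1}f^r$ to be bounded, which is more than a $C^k$ hypothesis on $f$ provides. Your appeal to $f^r$ being $C^\infty$ just repeats the paper's claim, which is only valid if $f$ itself is that smooth.
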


\begin{proof}
    The proof is by induction. For $k=1$, consider a twice-differentiable $\sigma\in \Sigma^r_0(1,b_2)$. By Lemma \ref{lem:DlFsharp}, one has
    \[
    \mathrm{Lip}(Df^r_{\#}(\sigma))\leq \frac{(\kappa+2\varepsilon)(1+\varepsilon)^2}{(1-2\varepsilon)^{3}}\mathrm{Lip}(D\sigma) + \widetilde{C}_2\leq \frac{(\kappa+2\varepsilon)(1+\varepsilon)^2}{(1-2\varepsilon)^{3}}b_2 + \widetilde{C}_2,
    \]
    where $\widetilde{C}_2$ depends only on $J^2f^r$ and $J^1\sigma$. Since $(\kappa+2\varepsilon)(1+\varepsilon)^2/(1-2\varepsilon)^3<1$ by \eqref{eq:varepsilonbound2} and $\widetilde{C}_2$ does not depend on $b_2$, the right hand side of the above is $\leq b_2$ provided that
    \[
    b_2\geq \widetilde{C}_2\bigg(1-\frac{(\kappa+2\varepsilon)(1+\varepsilon)^2}{(1-2\varepsilon)^3}\bigg)^{-1}.
    \]
    Since the twice-differentiable sections are dense in $\Sigma^r_0(1,b_2)$, this suffices to guarantee that $f^r_{\#}(\sigma)\in\Sigma^r_0(1,r_2)$. For $k>1$, the same argument may be used recursively to choose constants $b_3,\dots,b_{k+1}>0$ such that $f^r_{\#}(\Sigma^r_0(1,b_2,\dots,b_{k+1}))\subset\Sigma^r_0(1,b_2,\dots,b_{k+1})$.
\end{proof}

\section{Application to gradient descent}\label{sec:deeplearning}

In this section, we describe how Theorem \ref{thm:invariantfoliation} applies to gradient descent on non-convex least squares objectives.

\subsection{A simple example}

We first give a simple example to illustrate why it is necessary to consider manifolds with boundary as in Theorem \ref{thm:invariantfoliation}. The example we consider is matrix factorisation, which is one of the simplest nontrivial examples of a deep neural network. Consider the function $g:\RB^{2\times 2}\times\RB^{2\times 2}\rightarrow\RB^{2\times 2}$ defined by
\[
g(W_1,W_2):=W_2W_1.
\]
Given $Y\in\RB^{2\times2}$, one wishes to optimise the least squares objective
\[
\nabla^2\ell(W_1,W_2):=\frac{1}{2}|Y-g(W_1,W_2)|^2,
\]
where $|\cdot|$ denotes the Frobenius norm, using gradient descent
\[
f_{\eta}:(W_1,W_2)\mapsto (W_1,W_2)-\eta\nabla\ell(W_1,W_2)
\]
with step size $\eta$. Let $Y\in\RB^{2\times 2}$ have singular value decomposition $U_Y\Sigma_Y V_Y^T$; if the singular values are $\sigma_1(Y)>\sigma_2(Y)>0$, then $Y$ is a regular value of $g$, and the minimisers $g^{-1}\{Y\}$ of $\ell$ are a smooth, 4-dimensional manifold that is diffeomorphic to $GL(2,\RB)$. As in the Introduction, centre manifolds come into play when considering the lifted map
\[
f:\RB^{2\times 2}\times\RB^{2\times 2}\times\RB\ni(W_1,W_2,\eta)\mapsto \big(f_{\eta}(W_1,W_2),\eta\big)\in\RB^{2\times 2}\times\RB^{2\times 2}\times\RB
\]
and the lifted manifold
\[
T:=\big\{(W_1,W_2,2/\lambda_1(W_1,W_2)):(W_1,W_2)\in\RB^{2\times 2}\times\RB^{2\times 2}\big\}.
\]
Even in this very simple example, there is a non-trivial ``bad set" $\widetilde{T}\subset T$ corresponding to points in $g^{-1}\{Y\}$ where the top eigenvalue $\lambda_1$ of the Hessian $\nabla^2\ell$ is non-unique. To see this, we compute these eigenvalues explicitly, noting that along $g^{-1}\{Y\}$ one has $\nabla^2\ell = Dg^TDg$, hence the eigenvalues of $\nabla^2\ell$ coincide with those of $Dg\,Dg^T$. One computes:
\[
Dg(W_1,W_2)[A_1,A_2] = A_2W_1 + W_2A_1,\qquad Dg(W_1,W_2)^T[Z] = (W_2^TZ,ZW_1^T),
\]
so that
\[
(DgDg^T)(W_1,W_2)[Z] = W_2W_2^TZ + ZW_1^TW_1,\qquad (DgDg^T)(W_1,W_2) = W_2W_2^T\otimes I_2 + I_2\otimes W_1^TW_1.
\]
Let $e_{2i}$ be the eigenvectors of $W_2W_2^T$ and $e_{1i}$ the eigenvectors of $W_1^TW_1$ with eigenvalues $\alpha_{2i}$ and $\alpha_{1i}$ respectively; then $e_{2i}e_{1j}^T$ are the eigenvectors of $Dg\,Dg^T$, with eigenvalues $\alpha_{2i}+\alpha_{1j}$. In particular, the top eigenvalue of $Dg\,Dg^T$ is $\alpha_{21}+\alpha_{11} = \|W_2\|_2^2 + \|W_1\|_2^2$, and this eigenvalue is unique if and only if both $\alpha_{21}$ and $\alpha_{11}$ are unique.

In particular, if either $\alpha_{12} = \alpha_{11}$ or $\alpha_{22} = \alpha_{21}$, then the top eigenvalue of $Dg\,Dg^T$ is degenerate. This occurs precisely when at least one of the $W_i$ has coincident singular values, i.e. when $W_1^TW_1 = aI$ or $W_2W_2^T=aI$ for some scalar $a$, i.e. when $W_1 = aQ$ or $W_2 = aQ$ for some orthogonal matrix $Q$. It follows that precisely along the copy $P:=P_1\cup P_2$ of $\big((0,\infty)\times O(2)\big)\sqcup\big((0,\infty)\times O(2)\big)$ in $g^{-1}\{Y\}$, where
\[
P_1:=\{(aQ,a^{-1}YQ^T):a\in(0,\infty),\,Q\in  O(2)\}\subset g^{-1}\{Y\},
\]
\[
P_2:=\{(a^{-1}Q^TY,aQ):a\in(0,\infty),\,Q\in O(2)\}\subset g^{-1}\{Y\},
\]
one has $Dg\,Dg^T$ having non-simple top eigenvalue. This $P$ is a 2-dimensional submanifold of $g^{-1}\{Y\}$ with four connected components. Its lift by the map $(\id_{\RB^{2\times 2}\times\RB^{2\times 2}},2/\lambda_1):\RB^{2\times2}\times\RB^{2\times2}\rightarrow\RB^{2\times2}\times\RB^{2\times2}\times\RB$ is the stated set $\widetilde{T}\subset T$. As in the Introduction, it is evident that
\[
\nu T|_{T\setminus \widetilde{T}} = E_c\oplus E_s,
\]
where $E_c$ is the span of the unit vector $\partial/\partial\eta$ for the $\eta$-variable and the top singular vector field of $Dg$, while $E_s$ is the span of the other nontrivial singular vector fields for $Dg$; moreover, $Df$ preserves both $\nu T|_{T\setminus\widetilde{T}}$ and this decomposition thereof, acts as a symmetric matrix thereon and has eigenvalues equal to $\pm1$ on $E_c$ and strictly $<1$ on $E_s$.





\subsection{Modification of maps}

In this subsection, we describe how Theorem \ref{thm:invariantfoliation} allows us to prove the existence of local centre manifolds for pathological examples of the above type. Suppose we have a $C^k$ map $f:\RB^n\rightarrow \RB^n$ admitting a Lipschitz submanifold $T\subset \RB^n$ of fixed points, which we assume to be closed as a subset of $\RB^n$ and to admit a closed subset $\widetilde{T}\subset T$ such that $T\setminus\widetilde{T}$ is smooth. Assume that $Df$ is symmetric at any point of $T\setminus\widetilde{T}$ and preserves the normal bundle $\nu T|_{T\setminus\widetilde{T}}$ as well as an orthogonal splitting $\nu T = E_u\oplus E_c\oplus E_s$ of $\nu T$ along which $Df$ has singular values strictly greater than 1, strictly equal to 1, and strictly less than 1 respectively.

There are two issues here which prevent the construction of a centre manifold using Theorem \ref{thm:invariantfoliation}: (1) the set $\widetilde{T}$, approaching which the spectral gap of $Df$ between $E_c$ and $E_s$ gets arbitrarily small, (2) the fact that $T\setminus\widetilde{T}$ may be non-compact and in particular may have a reach of zero as a submanifold of $\RB^n$. We address these issues by restricting attention to \emph{compact submanifolds} of $T\setminus \widetilde{T}$; since such manifolds will have boundary in general, it will also be necessary to modify and modifying $f$ in a neighbourhood of said boundary to ensure the applicability of Theorem \ref{thm:invariantfoliation}. We begin with the following lemma.

\begin{lemma}
    Let $S\subset T\setminus \widetilde{T}$ be a compact submanifold (with or without boundary). Then there exists $r>0$ such that $d_T(\widetilde{T},S):=\inf_{x\in \widetilde{T},y\in  S}d_T(x,y)\geq 2r$.
\end{lemma}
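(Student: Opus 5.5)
The plan is a compactness argument: the function $x\mapsto d_T(\widetilde{T},x)$ is continuous and strictly positive on the compact set $S$, so its minimum over $S$ is positive and we may take $r$ to be half of that minimum. If $\widetilde{T}=\emptyset$ the infimum is $+\infty$ by convention and any $r>0$ works, so assume $\widetilde{T}\neq\emptyset$. Recall that $T$ is closed in $\RB^n$ and is a (connected) Lipschitz manifold, so $d_T$ is the intrinsic path-length distance on $T$. I will only use two facts about it: it is a metric inducing the subspace topology of $T$, and it dominates the ambient Euclidean distance, since any rectifiable path in $T$ is also a path in $\RB^n$.

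\textbf{Continuity.} Define $\delta:T\rightarrow[0,\infty)$ by $\delta(y):=\inf_{x\in\widetilde{T}}d_T(x,y)$. By the triangle inequality, $\delta$ is $1$-Lipschitz with respect to $d_T$. It is therefore continuous on $T$, provided that $d_T$ induces the manifold topology on $T$. This holds because $T$ is locally bi-Lipschitz to Euclidean space, so the length metric is locally comparable to the Euclidean metric in charts.

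\textbf{Positivity.} Since $\widetilde{T}$ is closed in $T$, a point $y\notin\widetilde{T}$ has a $d_T$-ball around it disjoint from $\widetilde{T}$, so $\delta(y)>0$. Every point of $S$ lies in $T\setminus\widetilde{T}$, hence $\delta>0$ on $S$. If one prefers to avoid discussing the topology of $d_T$, there is an alternative route: since $d_T\geq|\cdot-\cdot|$, it suffices to bound the Euclidean distance $\inf_{x\in\widetilde{T},y\in S}|x-y|$ from below. Here $\widetilde{T}$ is closed in $\RB^n$ (closed in $T$, which is closed in $\RB^n$), $S$ is compact, and the two sets are disjoint, so their Euclidean distance is positive by the standard compact/closed separation argument.

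\textbf{Conclusion.} Either way, $\delta$ is continuous and positive on the compact set $S$, so it attains a positive minimum $m$, and we set $r:=m/2$. The second route is cleaner and is the one I would write.

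The only step needing care is the claim that $d_T$ is at least Euclidean, or alternatively that it is compatible with the topology of $T$. The former is immediate from the definition of length, so I expect no genuine obstacle.
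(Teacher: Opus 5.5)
Your proposal is correct. Your first route is exactly the paper's proof: the paper shows $|\delta(x)-\delta(y)|\leq d_T(x,y)$ by the triangle inequality, shows $\delta(x)=0$ iff $x\in\widetilde{T}$ using closedness of $\widetilde{T}$, and takes the positive minimum of $\delta$ over the compact set $S$.

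Your preferred second route is genuinely different. Instead of working with the intrinsic distance function, you use only the pointwise bound $d_T(x,y)\geq|x-y|$. You combine it with the positive Euclidean distance between the compact set $S$ and the disjoint closed set $\widetilde{T}\subset\RB^n$.

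What this buys you is that you never need $d_T$ to induce the manifold topology of $T$. The paper uses that fact tacitly when it passes from $d_T$-continuity of $\delta$ to the existence of a minimum on $S$, which is compact in the subspace topology. Making that step rigorous requires the local comparability of $d_T$ with the Euclidean metric coming from the Lipschitz charts, which is exactly the point you flagged. Conversely, the paper's route is intrinsic: it works verbatim for any metric on $T$ compatible with its topology, without reference to the embedding.

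Your route also gives the stronger extrinsic separation. It does not even need $T$ to be closed in $\RB^n$: since $\widetilde{T}$ is closed in $T$, its Euclidean closure meets $T$ only in $\widetilde{T}$, and so it misses $S$.

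When you write it up, note that in the second route the function whose positive minimum you take on $S$ is the Euclidean distance to $\widetilde{T}$, not $\delta$. Adjust your closing sentence (``either way, $\delta$ is continuous and positive'') accordingly.
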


\begin{proof}
    It suffices to prove that the map $\delta:T\rightarrow [0,\infty)$ defined by $\delta(x):=\mathrm{dist}_T(x,\widetilde{T}):=\inf_{y\in \widetilde{T}}d_T(x,y)$ is continuous, with $\delta(x) = 0$ if and only if $x\in \widetilde{T}$. Indeed, since $S\subset T\setminus \widetilde{T}$, one would then have $\delta|_S$ being a continuous, positive function on a compact set, from which the result follows.
    
    Let us therefore prove that $\delta$ is continuous. For any $z\in \widetilde{T}$ and $x,y\in T$, one has
    \[
    d_T(x,z)\leq d_T(x,y) + d_T(y,z)\Rightarrow \delta(x)\leq d_T(x,y) + \delta(y),
    \]
    and symmetrically $\delta(y)\leq d_S(x,y) + \delta(x)$, so that $|\delta(x)-\delta(y)|\leq d_T(x,y)$, implying that $\delta$ is continuous. That $\delta(x) = 0\Leftrightarrow x\in \widetilde{T}$ follows from closedness of $\widetilde{T}$. Indeed, $x\in \widetilde{T}\Rightarrow \delta(x) = 0$ is obvious, while if $\delta(x) = 0$ then by the definition of the infimum one may choose a sequence $\{z_n\}_{n\in\NB}\subset \widetilde{T}$ such that $d_T(x,z_n)\leq 1/n$ for all $n\in \NB$, implying that $x$ is a limit point of $\widetilde{T}$. Since $\widetilde{T}$ is closed, $x\in \widetilde{T}$.
\end{proof}

Let us therefore fix a compact submanifold $S\subset T\setminus \widetilde{T}$ of the same dimension as $T$, with $d_T(\widetilde{T},S)\geq 2r$ for some $r>0$. If $S$ is boundaryless, then the hypotheses of Theorem \ref{thm:invariantfoliation} apply to give us the desired invariant manifold. If $S$ has boundary, then we modify the map $f$ as follows.

\begin{lemma}
    Suppose that $S\subset T\setminus \widetilde{T}$ is a compact submanifold with boundary, of the same dimension as $T$, and that $Df|_{\nu T}$ is symmetric. Then there exists:
    \begin{enumerate}
        \item A compact submanifold $S'\subset T\setminus \widetilde{T}$ with boundary such that $S\subset S'$.
        \item A neighbourhood $U$ of $\partial S'$ such that $S\subset \RB^n\setminus U$ and a $C^k$ map $f':\RB^n\rightarrow \RB^n$ such that $f'|_{\RB^n\setminus U}\equiv f|_{\RB^n\setminus U}$, and such that $S'$ and $f'$ satisfy the hypotheses of Theorem \ref{thm:invariantfoliation}.
    \end{enumerate}
    In particular, Theorem \ref{thm:application} holds.
\end{lemma}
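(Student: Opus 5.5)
The plan is to enlarge $S$ slightly inside $T\setminus\widetilde{T}$ and then modify $f$ only in a thin collar around the new boundary. There the modified map should preserve each normal fibre exactly, which is precisely hypothesis 1 of Theorem \ref{thm:invariantfoliation}.

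\emph{Step 1: the enlarged manifold $S'$.} By the preceding lemma, $d_T(\widetilde{T},S)\geq 2r$. Let $\delta_S(x)$ be the distance in $T$ from $x$ to $S$. Smooth $\delta_S$ inside the open set $\{\delta_S<2r\}\subset T\setminus\widetilde{T}$ and choose a regular value $c\in(0,r)$. Set
\[
S':=\{\text{smoothed }\delta_S\leq c\}.
\]
This is a compact $C^\infty$ submanifold with boundary, contains $S$ in its interior, and lies in $T\setminus\widetilde{T}$. Since $S'$ is compact and avoids $\widetilde{T}$, the spectral gaps of $Df$ are uniform over $S'$. The splitting $\nu S' = E_u\oplus E_c\oplus E_s$ is smooth there, since it is given by spectral projections of the symmetric matrix $Df|_{\nu T}$ with simple eigenvalue clusters. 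Hence hypotheses 2 and 3 hold for $S'$ and $f$. The reach $r_{S'}$ is positive by compactness.

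\emph{Step 2: the modified map $f'$.} Take a collar $\gamma:\partial S'\times[0,R]\to S'$ with $R$ small, so that its image $\Gamma$ is disjoint from $S$. Work in the tubular coordinates $e_{S'}$, where $f$ becomes $f^{e_{S'}}(x,v)=(f_{S}(x,v),f_\nu(x,v))$ near the zero section. Define the \emph{fibre-preserving} map
\[
g(x,v):=\big(x,\,P(f_S(x,v),x)f_\nu(x,v)\big),
\]
which parallel-transports the normal component back to the fibre over $x$. Take a cutoff $\psi$ on $S'$ (pulled back along the projection) that equals $1$ near $\partial S'$ and $0$ outside $\gamma(\partial S'\times[0,R/2])$, together with a cutoff in $|v|$. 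Then set
\[
f'=\psi\, g+(1-\psi)f^{e_{S'}}
\]
in these coordinates (convex combination in the base via geodesics, as in the construction of $f^r$), and $f'=f$ elsewhere. The region $U$ is the $e_{S'}$-image of a small tube over the collar.

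Check the hypotheses. The zero section is still fixed pointwise. The map $f'$ is $C^k$, and hypothesis 1 holds because $f'=g$ near $\partial S'$. Furthermore $Df'(x,0)=Df(x,0)$ on $S'$. This holds because $D_\nu f_S(x,0)=0$ (by invariance of $\nu T$, cf.\ \eqref{eq:Df}) and $D_\nu P(f_S,x)|_{v=0}=0$, so $g$ and $f^{e_{S'}}$ agree to first order at the zero section. Consequently the invariance and splitting hypotheses 2 and 3 transfer verbatim from $f$ to $f'$.

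\emph{Step 3: conclusion.} Applying Theorem \ref{thm:invariantfoliation} to $(S',f')$ yields $W^{cu}$, which is a centre manifold here since $E_u=0$. Restrict it over $U_0:=\mathrm{int}(S)$, or over a neighbourhood of any given $x$, obtained by choosing $S$ to be a small closed ball around $x$. The fibres of $W^{cu}$ over $S$ lie outside the modification region $U$ once the tube radius is small, so there $f'=f$ and $W^{cu}|_{S}$ is $f$-invariant in the local sense. This gives $W^c_U$ for Theorem \ref{thm:application}.

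The main obstacle is Step 2, specifically showing that $f'$ agrees with $f$ to first order along $S'$ while being exactly fibre-preserving near $\partial S'$, and that the base cutoff does not spoil $C^k$ regularity or the fixed-point property. If the first-order agreement fails, the fallback is to make $\psi$ vary slowly and absorb the error into the $\varepsilon$-robustness already present in Lemma \ref{lem:globalbounds}. A secondary subtlety is the smoothness of the splitting, which relies on the symmetry of $Df$ and the uniform gap on $S'$.
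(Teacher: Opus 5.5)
Your strategy matches the paper's. You enlarge $S$ inside $T\setminus\widetilde{T}$, then near the new boundary you replace the base component of $f$ by the identity while keeping its normal component, and interpolate with cutoffs. Your parallel-transport map $g$ plays the role of the paper's frame-based $\widetilde{f}_\alpha$. Your observation that $g$ agrees with $f$ to first order along the zero section is a cleaner way than the paper's convex-combination spectral argument to see that hypotheses 2 and 3 survive.

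However, Step 2 has a genuine gap. You work in the tubular coordinates $e_{S'}$ of $S'$ itself with a one-sided (inward) collar, and this breaks down exactly on the boundary fibres, where you need $\psi=1$.
\begin{enumerate}
\item $f^{e_{S'}}=e_{S'}^{-1}\circ f\circ e_{S'}$ is not defined there. For $x\in\partial S'$ and $v\neq 0$, the second-order part of $f(e_{S'}(x,v))$ can have a component tangent to $T$ pointing out of $S'$. The image can then lie beyond $\partial S'$, outside $\tau S'(r_{S'})$; this overflow is precisely what hypothesis 1 exists to exclude. In that case $f_S(x,v)$, the transport $P(f_S(x,v),x)$, and hence $g$ are undefined.
\item $\tau S'(\rho)$ is not a neighbourhood of $\partial S'$ in $\RB^n$. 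Take $x_0\in\partial S'$ and $n$ the outward conormal of $\partial S'$ in $T_{x_0}T$; the points $x_0+tn$ with small $t>0$ do not lie in $\tau S'(\rho)$. So your $U$ is not a neighbourhood of $\partial S'$, as the statement requires.
\item Even if $g$ were defined, your $f'$ would not be continuous, let alone $C^k$. It maps the boundary fibre $e_{S'}(\nu_{x_0}S'(\rho))$ into the normal fibre over $x_0$, while $f'=f$ at points just beyond this end face. In general $f$ does not preserve that fibre (otherwise no modification would be needed), so $f'$ jumps across $e_{S'}(\nu S'|_{\partial S'})$.
\end{enumerate}

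The missing ingredient is the paper's second enlargement $S''\supset S'$ together with a two-sided collar of $\partial S'$.
\begin{enumerate}
\item In Step 1, pick a second regular value $c'\in(c,r)$ of your smoothing $\widetilde{\delta}_S$ and set $S'':=\{\widetilde{\delta}_S\le c'\}$.
\item Let $\beta:\partial S'\times[-R,R]\to S''$ be a bi-collar of $\partial S'$ disjoint from $S$.
\item Choose the normal radius $\rho$ so small that $f$ maps the $e_{S''}$-tube of radius $\rho$ over $\beta(\partial S'\times[-R,R])$ into $\tau S''(r_{S''})$. This is possible because $\partial S'$ has positive distance from $\partial S''$.
\item Define $g$ in $e_{S''}$-coordinates. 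Take the cutoff to be a function of the signed collar coordinate $t$, equal to $1$ near $t=0$ and vanishing for $|t|\ge R/2$, times a cutoff in $|v|$ vanishing for $|v|\ge \rho/2$.
\end{enumerate}
Then $U:=e_{S''}\big(\nu S''(\rho)|_{\beta(\partial S'\times(-R,R))}\big)$ is a genuine open neighbourhood of $\partial S'$ in $\RB^n$, disjoint from $S$. The modification is supported in a compact subset of $U$, so $f'$ extends by $f$ to a $C^k$ map of $\RB^n$. Your first-order computation holds verbatim along $S''$. Hypothesis 1 for $S'$ holds because $\nu_xS'=\nu_xS''$ for $x\in\partial S'$. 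With this repair, the rest of your argument, including Step 3, goes through.
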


\begin{proof}
    Let $R>0$ be sufficiently small that $d_T(S,\widetilde{T})\geq 3R$, and such that there exists an embedding $\varphi:\partial S\times [-R,2R]\rightarrow T\setminus\widetilde{T}$ such that $\varphi|_{\partial S\times[R,2R]}$ is a collar for $S$ with $\varphi(\partial S\times\{R\}) = \partial S$, and with $S':=S\cup\mathrm{range}(\varphi|_{\partial S\times[0,R]})$ and $S'':=S\cup\mathrm{range}(\varphi|_{\partial S\times[-R,0]})$ being manifolds with boundary containing $S$ having boundaries diffeotopic to $S$ through $\varphi$ and having collars $\varphi|_{\partial S\times[0,R]}$ and $\varphi|_{\partial S\times [-R,0]}$ respectively. Fixing the identification $\chi:\partial S'\rightarrow\partial S$ defined by $\varphi$, consider the bi-collar neighbourhood $\psi:\partial S'\times[-R,R]\rightarrow S''$ defined by $\psi(x,t):=\varphi(\chi(x),t)$. Denote by $r_{S''}>0$ the reach of $S''\subset \RB^n$, and let $0<r\leq \min\{r_{S''},R\}$ be sufficiently small that
    \[
    f\big(\overline{\tau\psi(\partial S'\times[-r,r])(r)}\big)\subset \tau S''(r_{S''}).
    \]
    We modify $f$ near $\psi(\partial S'\times[-r,r])$ as follows.

    Choose any finite family $\{(U_{\alpha},\varphi_{\alpha})\}_{\alpha=1}^N$ of coordinate charts covering $\partial S'$, each with a local orthonormal frame field $\nu_{\alpha}:U_{\alpha}\times[-R,R]\times \RB^{n_u+n_c+n_s}\rightarrow \nu S''|_{\psi(\varphi_{\alpha}(U_{\alpha})\times[-R,R])} = (E_u\oplus E_c\oplus E_s)|_{\psi(\phi_{\alpha}(U_{\alpha})\times[-R,R])}$ respecting the decomposition $\nu S'' = E_u\oplus E_c\oplus E_s$, where the ranks of $E_u,E_c,E_s$ are $n_u,n_c,n_s$ respectively. Assume that there are open sets $\{V_{\alpha}\subset U_{\alpha}\}_{\alpha=1}^N$ such that $\{\varphi_{\alpha}(V_{\alpha})\}_{\alpha=1}^N$ is a cover of $\partial S'$, and with $r$ sufficiently small that $f\circ\nu_{\alpha}(V_{\alpha}\times[-r,r]\times\overline{\RB^{n_u+n_c+n_s}(r)})\subset \nu_{\alpha}(U_{\alpha}\times[-R,R]\times \overline{\RB^{n_u+n_c+n_s}(r_{S''})})$. Assume furthermore that the frame is chosen to respect the decompositions $E_c = E^+_c\oplus E^-_c$ and $E_u = E^+_u\oplus E^-_u$ along which $Df$ acts with positive and negative eigenvalues respectively; due to the uniform spectral gap between such positive and negative eigenvalues, these $\pm$ decompositions are as smooth as $E_c$ and $E_u$, hence the $\nu_{\alpha}$ can be chosen to respect them without compromising smoothness. Letting $e_{S''}:\nu S''\rightarrow \RB^n$ be the exponential map, one obtain coordinates $\kappa_{\alpha}:=e_{S''}\circ\nu_{\alpha}:U_{\alpha}\times[-R,R]\times\overline{\RB^{n_u+n_c+n_s}(r_{S''})}\rightarrow\RB^n$ in which $f$ takes the form
    \[
    f_{\alpha}(x,v,w):=(f_{1\alpha}(x,v,w),f_{2\alpha}(x,v,w),f_{3\alpha}(x,v,w)),\qquad (x,v,w)\in V_{\alpha}\times[-r,r]\times \overline{\RB^{n_u+n_c+n_s}(r)}
    \]
    Fix a monotonic bump function $\phi:[0,\infty)\rightarrow[0,1]$ such that $\phi|_{[0,(1/2)r]}\equiv 1$ and $\phi|_{[r,\infty)}\equiv 0$. Then, for $i=1,2$, define
    \[
    \widetilde{f}_{i\alpha}(x,v,w):=\big(1-\phi(|w|)\phi(|v|)\big)f_{i\alpha}(x,v,w) + \phi(|w|)\phi(|v|)f_{i\alpha}(x,v,0),
    \]
    and
    \[
    \widetilde{f}_{\alpha}(x,v,w) :=\big(\widetilde{f}_{1\alpha}(x,v,w),\widetilde{f}_{2\alpha}(x,v,w),f_{3\alpha}(x,v,w)\big)
    \]
    for any $(x,v,w)\in V_{\alpha}\times[-r,r]\times\overline{\RB^{n_u+n_c+n_s}(r)}$. Finally, let $\widetilde{f}_{\alpha}'$ be the locally defined map on $\RB^n$ defined by conjugating $\widetilde{f}_{\alpha}$ by $\kappa_{\alpha}$. Observe that:
    \begin{enumerate}
        \item\label{1} $\widetilde{f}'_{\alpha}$ maps a neighbourhood of zero in the exponentiated fibre of $\nu S'$ over any point in $\partial S'$ to itself. This is because $\widetilde{f}_{i\alpha}(x,0,w) = f_{i\alpha}(x,0,0) = \begin{cases} x\text{ if $i=1$}\\ 0\text{ if $i=2$}\end{cases}$ for all $x\in V_{\alpha}$ and $|w|\leq (1/2)r$.
        \item\label{2} $\widetilde{f}'_{\alpha}$ coincides with $f$ on the image of the set $\{(x,r,w):x\in\partial S',\,|w| = r\}\cup\{(p,-r,w):x\in\partial S',\,|w|=r\}$ under $\kappa_{\alpha}$, and smoothly extends identically to $f$ outside of this boundary.
        \item\label{3} $\widetilde{f}'_{\alpha}$ fixes $S''$ wherever the latter intersects the domain of the former.
        \item\label{4} $D\widetilde{f}'_{\alpha}|_{T\RB^n|_{S''}}$ is block-diagonal with respect to the decomposition $T\RB^n|_{S''} = TS'\oplus \nu S''$, and is symmetric. To see that the former is true, note that the coordinates $\kappa_{\alpha}$ carry the product decomposition $T(U_{\alpha}\times[-R,R]\times \RB^{n_u+n_c+n_s})|_{U_{\alpha}\times[-R,R]\times\{0\}} = T(U_{\alpha}\times[-R,R])\times\RB^{n_u+n_c+n_s}$ onto the orthogonal decomposition $T\RB^n|_{S''} = TS''\oplus \nu S''$. Since $Df$ is block diagonal with respect to this orthogonal decomposition, one has $D_wf_{i\alpha}(x,v,0) = 0$ for $i=1,2$ and $D_{(x,v)}f_{3\alpha}(x,v,0) = 0$ for all $(x,v)\in V_{\alpha}\times[-r,r]$. Consequently, since $\phi\circ|\cdot|$ is constant on a neighbourhood of zero:
        \[
        D_w\widetilde{f}_{i\alpha}(x,v,0) = \big(1-\phi(|w|)\phi(|v|)\big)D_wf_{i\alpha}(x,v,0) = 0\quad\text{for $i=1,2$},
        \]
        \[
        D_{(x,v)}\widetilde{f}_{3\alpha}(x,v,0) = D_{(x,v)}f_{3\alpha}(x,v,0) = 0
        \]
        for all $(x,v)\in V_{\alpha}\times[-r,r]$, implying that $D\widetilde{f}_{\alpha}$ also has a block diagonal structure along $w=0$, hence that $D\widetilde{f}'_{\alpha}$ has the claimed block diagonal structure along $S''$. That $D\widetilde{f}'_{\alpha}$ is symmetric follows from the fact that its blocks are symmetric; the $TS''$ block is the identity since $\widetilde{f}'_{\alpha}$ fixes $S''$ wherever defined, and the $\nu S''$ block is symmetric because $D_w\widetilde{f}_{3\alpha}(x,v,0) = D_wf_{3\alpha}(x,v,0)$ is symmetric for all $(x,v)\in V_{\alpha}\times[-r,r]$ as a consequence of the symmetry of $Df$ and the orthonormality of the local frame field $\nu_{\alpha}$.
        \item\label{5} $D\widetilde{f}'_{\alpha}|_{\nu S''}$ admits the same spectral bounds on $E_u,E_c,E_s$ as does $Df|_{\nu S''}$; this is because $D_w\widetilde{f}_{3\alpha}(x,v,0) = D_wf_{3\alpha}(x,v,0)$ for all $(x,v)\in V_{\alpha}\times[-r,r]$ and the frame $\nu_{\alpha}$ is defined to respect the decomposition $\nu S'' = E_c\oplus E_u\oplus E_s$. To be precise, one has
        \[
        \mathrm{spec}(D\widetilde{f}'_{\alpha}(x)|_{E_s})\subset [-\kappa,\kappa]\subset (-1,1),
        \]
        \[
        \mathrm{spec}(D\widetilde{f}'_{\alpha}(x)|_{E^{\pm}_c}) = \{\pm 1\},\qquad \mathrm{spec}(D\widetilde{f}'_{\alpha}(x)|_{E_u^{\pm}})\subset \pm(1,\infty),
        \]
        where $\kappa<1$ is the spectral bound on $Df|_{E_s|_{S''}}$.
    \end{enumerate}
    We now patch the $\widetilde{f}'_{\alpha}$ together. Let $\{\rho_{\alpha}\}_{\alpha=1}^N$ be a partition of unity subordinate to the covering $\{\psi(\varphi_{\alpha}(V_{\alpha})\times[-r,r])\}_{\alpha=1}^N$ of the bi-collar $\psi(\partial S'\times[-r,r])$ of $S'$, and define
    \[
    f'(e_{S''}(x,w)):=\sum_{\alpha=1}^N\rho_{\alpha}(x)\tilde{F}'_{\alpha}(e_{S''}(x,w)),\qquad (x,w)\in \nu S''|_{\psi(\partial S'\times[-r,r])}.
    \]
    Observe that:
    \begin{enumerate}
        \item $f'$ sends $e_{S''}(\nu_xS'(r))$ into $e_{S''}(\nu_xS'(r_{S''}))$ for any $x\in\partial S'$ by \ref{1}.
        \item $f'$ extends smoothly by $f$ to a globally-defined map $\RB^n\rightarrow\RB^n$ outside of the neighbourhood $\tau\psi(\partial S'\times[-r,r])(r)$ using \ref{2}.
        \item $F'$ admits $L_2$ as a manifold of fixed points by \ref{3}.
        \item For any $x$ contained in any $\psi(\partial S'\times[-r,r])$, one has that
        \[
        Df'(x) = \sum_{\alpha=1}^N\big(D\rho_{\alpha}(x)\tilde{F}'_{\alpha}(x) + \rho_{\alpha}(x)D\widetilde{f}'_{\alpha}(x)\big) = D\big(\sum_{\alpha=1}^N\rho_{\alpha}\big)|_{x}x + \sum_{\alpha=1}^N\rho_{\alpha}(x)D\widetilde{f}'_{\alpha}(x) = \sum_{\alpha=1}^N\rho_{\alpha}(x)D\widetilde{f}'_{\alpha}(x)
        \]
        is symmetric, and block diagonal with respect to the decomposition $T_x\RB^n = T_xS''\oplus \nu_xS''$ by \ref{4}. Consequently, by \ref{2}, the same holds at all points in $M$.
        \item For any $x$ contained in any $\psi(\partial S'\times[-r,r])$, the convex combination $Df'(x) = \sum_{\alpha=1}^N\rho_{\alpha}(x)D\widetilde{f}'_{\alpha}(x)$ of symmetric matrices admits the same spectral bounds on $E_u,E_c,E_s$ as does $Df(x)$. Indeed, by \ref{5}, this is true of the individual $D\widetilde{f}'_{\alpha}(x)$, and since the $D\widetilde{f}'_{\alpha}(x)$ are symmetric, this alone is sufficient to give $|\mathrm{spec}(\sum_{\alpha=1}^N\rho_{\alpha}(x)D\widetilde{f}'_{\alpha}(x)|_{E_s})|\leq \kappa<1$. Symmetry alone is not enough to guarantee the desired spectral properties for $DF'(x)|_{E_c\oplus E_u}$, since convex combinations of symmetric matrices with eigenvalue magnitudes $\geq 1$ can cancel to yield a matrix with eigenvalue magnitudes $<1$. However, this issue is overcome by using the fact that the frames $\nu_{\alpha}$ used to define the $\widetilde{f}'_{\alpha}$ respect the subspaces $E^{\pm}_c$ and $E^{\pm}_u$ on which $Df$ acts with positive or negative eigenvalues.
    \end{enumerate}
    Thus $f'$ is a map with the desired properties on the compact manifold-with-boundary $S'$.

    To see that Theorem \ref{thm:application} holds, simply note that any $x\in T\setminus\widetilde{T}$ admits, by closedness of $\widetilde{T}$, an open neighbourhood $U\subset T$ whose closure $\overline{U}$ is a compact submanifold with boundary contained in $T\setminus\widetilde{T}$; then the lemma applies to give Theorem \ref{thm:application}.
\end{proof}

\appendix

\section{The space $C^{k,1}$ of $C^k$ functions with locally Lipschitz derivatives}\label{sec:clarke}

The strong topology on the space $C^k(M,N)$ of $k$-times continuously differentable functions between $M$ and $N$ is well-known \cite{hirsch}. In particular, it is known that for $k\geq 1$, diffeomorphisms are \emph{open} in this topology (provided one restricts to functions mapping boundary to boundary) \cite[Chapter 2, Theorem 1.6]{hirsch}. The main purpose of this section is to prove that the same result holds for the space $C^{k,1}(M,N)\subset C^{k}(M,N)$ of functions with \emph{locally Lipschitz} $k^{th}$ order derivative for all $k\geq 0$. This is undertaken in Subsection \ref{subsec:open}, following the recollection of some standard (or at least folklore) definitions and results in Subsection \ref{subsec:clarke}, on the Clarke calculus for locally Lipschitz functions; and Subsection \ref{subsec:clarkewhitney}, on the definition of the strong topology in $C^{k,1}(M,N)$.

Since we are concerned with manifolds with boundary, we will need to consider open subsets of the Euclidean \emph{half-space}
\[
\HB^n:=\{(x_1,\dots,x_n)\in\RB^n:x_1\geq 0\},
\]
equipped with the subspace topology from $\RB^n$.

\subsection{Locally Lipschitz functions and Clarke derivatives}\label{subsec:clarke}

\begin{definition}
    Let $V\subset\HB^m$ be an open set. A function $f:V\rightarrow \RB^n$ is said to be \textbf{Lipschitz} if
    \begin{align}\label{eq:lipschitzconstant}
    \sup_{x,y\in V:x\neq y}\frac{|f(x)-f(y)|}{|x-y|}<\infty.
    \end{align}
    The number \eqref{eq:lipschitzconstant} is referred to as the \textbf{Lipschitz constant} of $f$. If $U\subset\HB^m$ is open then $f:U\rightarrow\RB^n$ is said to be \textbf{locally Lipschitz} if any point $x\in U$ admits a neighbourhood $V\subset U$ such that $f|_V$ is Lipschitz.
\end{definition}

Locally Lipschitz functions are almost-everywhere differentiable.

\begin{proposition}
    Let $U\subset\HB^m$ be an open set, and let $f:U\rightarrow\RB^n$ be locally Lipschitz. Then $f$ is differentiable at Lebesgue-almost-every point in $U$.
\end{proposition}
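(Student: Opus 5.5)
Since the statement is purely local, I will reduce it to Rademacher's theorem on $\RB^m$. Note first that $\HB^m\setminus\mathrm{int}(\HB^m)=\{x_1=0\}$ is a hyperplane, so it has Lebesgue measure zero. It therefore suffices to prove differentiability almost everywhere on $U\cap\mathrm{int}(\HB^m)$, which is an open subset of $\RB^m$. Next I cover this open set by countably many open balls $B_j$ whose closures lie in it and on each of which $f$ is Lipschitz. This is possible because $f$ is locally Lipschitz, and Lindelöf's property (second countability of $\RB^m$) lets me extract a countable subcover from the neighbourhoods $V$ of the definition, shrinking each to a ball. A countable union of null sets is null, so it is enough to prove the claim for a Lipschitz map on a single ball.

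For a Lipschitz $f|_{B_j}$ I extend it to a Lipschitz map $F:\RB^m\rightarrow\RB^n$. Componentwise, the McShane extension $F_i(x):=\inf_{y\in B_j}\big(f_i(y)+L|x-y|\big)$ does this. Because $B_j$ is open, differentiability of $F$ at points of $B_j$ is the same as differentiability of $f$ there. The problem therefore reduces to the classical Rademacher theorem for real-valued Lipschitz functions on $\RB^m$, applied to each component: a vector-valued map is differentiable at $x$ exactly when all its components are.

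For completeness, here is the standard argument I would sketch rather than cite.
\begin{enumerate}
\item For each fixed direction $v$, the map $t\mapsto F(x+tv)$ is Lipschitz on lines, hence absolutely continuous. By the one-dimensional Lebesgue theorem and Fubini, the directional derivative $D_vF(x)$ therefore exists almost everywhere.
\item The identity $D_vF=\nabla F\cdot v$ holds almost everywhere. To see this, test against $\psi\in C_c^\infty$ and integrate by parts in difference quotients, using dominated convergence, which applies since the quotients are bounded by $L$.
\item Fix a countable dense set of unit directions. Let $x$ be a point where all of the above hold. A compactness argument on the unit sphere, using the uniform Lipschitz bound $|F(x+tv)-F(x+tv')|\le Lt|v-v'|$, upgrades directional differentiability in the dense set of directions to full Fréchet differentiability at $x$.
\end{enumerate}

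The main obstacle is step (iii), the passage from almost-everywhere directional derivatives along a countable dense set of directions to genuine differentiability. The rest of the argument is bookkeeping about null sets and extensions. Given that the paper calls these results ``standard (or at least folklore)'', I would in practice cite Rademacher's theorem directly (e.g.\ \cite{clarke}) after making the reductions above.
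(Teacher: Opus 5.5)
Your proof is correct and follows the same route as the paper: reduce to an open subset of $\RB^m$ and invoke Rademacher's theorem. The only difference is in the reduction. The paper extends $f$ across the boundary using Kirszbraun's theorem, whereas you discard the null hyperplane $\{x_1=0\}$ and localize to countably many balls with a componentwise McShane extension; this also makes explicit the localization step that the paper's appeal to Kirszbraun (a result about Lipschitz, not merely locally Lipschitz, maps) leaves implicit.
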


\begin{proof}
    By Kirszbraun's extension theorem \cite[2.10.43]{federer}, we may assume without loss of generality that $U$ is open in $\RB^m$. Then the result follows from Rademacher's theorem \cite[3.1.6]{federer}.
\end{proof}

The almost-everywhere differentiability of locally Lipschitz functions enable the definition of the following ``generalised derivative", introduced by Clarke \cite{clarke}.

\begin{definition}
    Let $U\subset\HB^m$ be open and let $f:U\rightarrow\RB^n$ be a locally Lipschitz function. The \textbf{Clarke derivative} of $f$ at any point $x\in U$ is the convex hull of the all matrices of the form $M = \lim_{x_i\rightarrow x}Df(x_i)$, where $x_i\rightarrow x$ is any sequence of points in $U$ converging to $x$ at which $f$ is differentiable. The Clarke derivative at $x$ will be denoted $Df(x)$.
\end{definition}

The next proposition, whose proof is identical to that of \cite[Proposition 2.6.2]{clarke}, shows that the Clarke derivative of a locally Lipschitz function is quite well-behaved.

\begin{proposition}\label{prop:clarke}
    Let $U\subset\HB^m$ be open and let $f:U\rightarrow\RB^n$ be a locally Lipschitz function. The Clarke derivative of $f$ has the following properties.
    \begin{enumerate}
        \item For all $x\in U$, $Df(x)$ is a convex, compact subset of $\RB^{n\times m}$.
        \item If $x_i\rightarrow x$ and $A_i\in Df(x_i)$ has $A_i\rightarrow A$, then $A\in Df(x)$.
        \item The multivalued map $x\mapsto Df(x)$ is outer-semicontinuous in the sense that for every $\epsilon>0$, there is $\delta>0$ such that $Df(y)\subset Df(x)+ B_{\RB^{n\times m}}(0,\epsilon)$ for all $y\in B_{\RB^m}(x,\delta)\cap U$.
    \end{enumerate}
\end{proposition}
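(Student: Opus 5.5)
The plan is to work locally near a fixed $x\in U$ and use only three ingredients: the local Lipschitz bound, the almost-everywhere differentiability of $f$, and Carathéodory's theorem in the finite-dimensional space $\RB^{n\times m}$. Fix $x\in U$ and choose a neighbourhood $V\subset U$ of $x$ on which $f$ is $L$-Lipschitz. At every point $y\in V$ where $f$ is differentiable, $|Df(y)|_{\op}\leq L$. Write $\partial_B f(x)$ for the set of limits $\lim Df(x_i)$ along sequences of differentiability points $x_i\to x$, so that $Df(x)$ is the convex hull of $\partial_B f(x)$.

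\textbf{Property 1.} First, $\partial_B f(x)$ is nonempty. By the preceding proposition the differentiability points have full measure, and $V\cap B(x,\delta)$ has positive measure for every $\delta>0$ even when $x$ lies on $\partial\HB^m$, so differentiability points accumulate at $x$; boundedness then gives a convergent subsequence of their Jacobians. Next, $\partial_B f(x)$ is contained in the closed ball of radius $L$, hence bounded. It is also closed by a diagonal argument: if $M_k\in\partial_B f(x)$ and $M_k\to M$, pick a differentiability point $y_k$ with $|y_k-x|<1/k$ and $|Df(y_k)-M_k|<1/k$; then $Df(y_k)\to M$. So $\partial_B f(x)$ is compact. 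In $\RB^{n\times m}$ the convex hull of a compact set is compact, because by Carathéodory it is the image of the compact set $\Delta_N\times(\partial_B f(x))^{N}$, with $N=nm+1$, under a continuous map. This proves that $Df(x)$ is convex and compact.

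\textbf{Property 2 (closed graph).} Let $x_i\to x$, $A_i\in Df(x_i)$ and $A_i\to A$.
1. By Carathéodory, write $A_i=\sum_{j=1}^N\lambda_i^jB_i^j$ with $\lambda_i\in\Delta_N$ and $B_i^j\in\partial_B f(x_i)$.
2. By the diagonal argument from Property 1, choose differentiability points $y_i^j$ with $|y_i^j-x_i|<1/i$ and $|Df(y_i^j)-B_i^j|<1/i$.
3. For large $i$ all $y_i^j$ lie in $V$, so the $B_i^j$ are bounded by $L$. Pass to a subsequence along which $\lambda_i\to\lambda\in\Delta_N$ and $B_i^j\to B^j$ for each $j$.
4. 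Then $Df(y_i^j)\to B^j$ with $y_i^j\to x$, so each $B^j\in\partial_B f(x)$. Hence $A=\sum_j\lambda^jB^j\in Df(x)$.

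This step, getting Carathéodory and the diagonal approximation to interact correctly, is the only delicate point. The rest is compactness.

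\textbf{Property 3 (outer semicontinuity).} Argue by contradiction. Otherwise there are $\epsilon>0$, points $y_i\to x$ in $U$, and $A_i\in Df(y_i)$ with $\mathrm{dist}(A_i,Df(x))\geq\epsilon$. For large $i$, $y_i\in V$, so $|A_i|\leq L$, since $Df(y_i)$ is the convex hull of limits of Jacobians bounded by $L$. Extract a convergent subsequence $A_i\to A$. Property 2 gives $A\in Df(x)$, which contradicts $\mathrm{dist}(A,Df(x))\geq\epsilon$.

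Finally, none of these arguments uses anything beyond relative openness of $U$ in $\HB^m$ and the fact that sequences stay inside $U$. They are therefore identical to \cite[Proposition 2.6.2]{clarke}, as the statement asserts.
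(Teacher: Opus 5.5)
Your proof is correct and follows essentially the same route as the argument the paper defers to, Clarke's proof of \cite[Proposition 2.6.2]{clarke}: Jacobians bounded near $x$ plus a diagonal argument give compactness, Carath\'eodory's theorem combined with diagonal approximation gives the closed-graph property, and outer semicontinuity then follows from it by compactness and contradiction. Your remark that $V\cap B(x,\delta)$ has positive measure even when $x\in\partial\HB^m$ is exactly the check needed to justify the paper's claim that Clarke's argument carries over verbatim to open subsets of $\HB^m$.
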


Proposition \ref{prop:clarke} has the following useful corollary.

\begin{corollary}\label{cor:sc}
    Let $U\subset\HB^m$ be open and let $f:U\rightarrow\RB^n$ be locally Lipschitz. Let $h:\RB^{n\times m}\rightarrow\RB$ be any continuous function. Then $x\mapsto \sup_{A\in Df(x)}h(A)$ is upper-semicontinuous and $x\mapsto \inf_{A\in Df(x)}h(A)$ is lower-semicontinuous.
\end{corollary}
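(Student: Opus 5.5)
The plan is to derive both semicontinuity statements from the outer-semicontinuity of the Clarke derivative, part 3 of Proposition \ref{prop:clarke}, combined with compactness of $Df(x)$ and continuity of $h$. The infimum statement follows from the supremum statement applied to $-h$, since $\inf_{A\in Df(x)}h(A)=-\sup_{A\in Df(x)}(-h)(A)$ and $-h$ is again continuous. So it suffices to show that $s(x):=\sup_{A\in Df(x)}h(A)$ is upper-semicontinuous. By part 1, $Df(x)$ is compact and nonempty, because $f$ is differentiable almost everywhere and the derivatives are locally bounded by the Lipschitz constant. Hence $s(x)$ is finite and is attained.

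Fix $x\in U$ and $\eta>0$; the goal is a $\delta>0$ with $s(y)\leq s(x)+\eta$ for all $y\in B(x,\delta)\cap U$. The set $K:=Df(x)+\overline{B_{\RB^{n\times m}}(0,1)}$ is compact, so $h$ is uniformly continuous on $K$. Choose $\epsilon\in(0,1)$ such that $|h(A)-h(B)|\leq\eta$ whenever $A,B\in K$ and $|A-B|<\epsilon$. Part 3 then gives $\delta>0$ with $Df(y)\subset Df(x)+B(0,\epsilon)$ for $y\in B(x,\delta)\cap U$. For any $B\in Df(y)$ there is therefore $A\in Df(x)$ with $|A-B|<\epsilon$, and both $A$ and $B$ lie in $K$. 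It follows that $h(B)\leq h(A)+\eta\leq s(x)+\eta$. Taking the supremum over $B\in Df(y)$ gives $\limsup_{y\to x}s(y)\leq s(x)$.

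The only step requiring any care is the use of uniform continuity. It is needed because $h$ is merely continuous, not Lipschitz, and it works because of the compact enlargement $K$ of $Df(x)$. Everything else follows directly from Proposition \ref{prop:clarke}.
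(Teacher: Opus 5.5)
Your proof is correct and follows essentially the same route as the paper: both combine compactness of $Df(x)$, continuity of $h$, and the outer-semicontinuity in part 3 of Proposition \ref{prop:clarke}. The differences are cosmetic. The paper picks an open neighbourhood $W$ of $Df(x)$ on which $h<a$, implicitly using compactness to fit an $\epsilon$-enlargement of $Df(x)$ inside $W$, whereas you use uniform continuity of $h$ on the compact set $Df(x)+\overline{B(0,1)}$; and you get the infimum statement by passing to $-h$ rather than by a ``similar argument''.
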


\begin{proof}
    We prove upper-semicontinuity of $x\mapsto \sup_{A\in Df(x)}h(A)$; lower-semicontinuity of $x\mapsto\inf_{A\in Df(x)}h(A)$ follows by a similar argument. It suffices to show that for any $x\in U$ and any $a\in\RB$ such that $\sup_{A\in Df(x)}h(A)<a$, there exists a neighbourhood $V$ of $x$ such that $x'\in V$ implies $\sup_{A\in Df(x')}h(A)<a$. However, since $Df(x)$ is compact, we can find an open neighbourhood $W$ of $Df(x)$ such that $h|_W<a$; then, since $x\mapsto Df(x)$ is outer-semicontinuous, there exists a neighbourhood $V$ of $x$ such that $x'\in V$ implies $Df(x')\subset W$, thus yielding the result.
\end{proof}

Given a matrix $A$, let $\sigma_{\max}(A)$ and $\sigma_{\min}(A)$ denote the largest and smallest singular values of $A$ respectively. Given an open set $U\subset\RB^m$ and a locally Lipschitz function $f:U\rightarrow\RB^n$, we will denote
\begin{equation}\label{eq:clarkenorm}
\sigma_{\max}(Df(x)):=|Df(x)|_{\mathrm{op}}{}:=\sup_{A\in Df(x)}|A|_{\mathrm{op}} = \sup_{A\in Df(x)}\sigma_{\max}(A),\qquad \sigma_{\min}(Df(x)):=\inf_{A\in Df(x)}\sigma_{\min}(A)
\end{equation}
By Corollary \ref{cor:sc}, $x\mapsto|Df(x)|_{\mathrm{op}}$ is upper-semicontinuous and $x\mapsto \sigma_{\min}(Df(x))$ is lower-semicontinuous.

For $k\geq 1$, the highest derivative of a $C^k$ function may be locally Lipschitz in the following sense. Denote by $\mathrm{Sym}^k(\RB^m,\RB^n)$ the space of symmetric $k$-multilinear maps $\RB^m\rightarrow\RB^n$. Equip this space with the multilinear operator norm
\[
|A|_{k,\mathrm{op}}:=\sup_{v_1,\dots,v_k\in \overline{\RB^m(1)}}|A[v_1,\dots,v_k]|,
\]
with the latter norm taken in $\RB^n$. Note that this norm satisfies
\[
|A|_{k,\mathrm{op}} = \sup_{v\in\overline{\RB^m(1)}}|A[v]|_{k-1,\mathrm{op}}
\]
for all $k\geq 1$. Given an open set $U\subset\HB^m$, a map $h:U\rightarrow\mathrm{Sym}^k(\RB^m,\RB^n)$ is said to be \emph{locally Lispchitz} if every point $x\in U$ admits a neighbourhood $V$ such that
\begin{align}\label{eq:locallylipschitz}
\sup_{y\in V\setminus\{x\}}\frac{|h(y)-h(x)|_{k,\mathrm{op}}}{|y-x|}<\infty.
\end{align}
A $C^k$ function $f:U\rightarrow\RB^n$ will be said to be $C^{k,1}$ if its derivative $D^kf:U\rightarrow\mathrm{Sym}^k(\RB^m,\RB^n)$ is locally Lipschitz in the sense of \eqref{eq:locallylipschitz}. One the other hand, regarding $D^kf$ as a map from one Euclidean space to another, the Clarke derivative $D^{k+1}f:=D(D^kf)$ makes sense in the usual fashion. The following result relates the Lipschitz constant of \eqref{eq:locallylipschitz} to the norm of the Clarke derivative.

\begin{proposition}\label{prop:clarkelipschitz}
    Let $U\subset\HB^m$ be an open set and let $f:U\rightarrow\RB^n$ be $C^{k,1}$ function, $k\geq 0$. Let $V\subset U$ be an open set on which $D^kf$ is Lipschitz. Then
    \[
    \sup_{x,y\in V:x\neq y}\frac{|D^kf(x)-D^kf(y)|_{k,\mathrm{op}}}{|x-y|}\geq   \sup_{x\in V}|D^{k+1}f(x)|_{k+1,\mathrm{op}},
    \]
    where $|D^{k+1}f(x)|_{k+1,\mathrm{op}}:=\sup_{A\in D(D^kf)(x)}|A|_{k+1,\mathrm{op}}$, with equality when $V$ is convex.
\end{proposition}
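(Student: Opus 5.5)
We prove the inequality by comparing generalised derivatives with difference quotients. Write $L$ for the left-hand side (the Lipschitz constant of $h:=D^kf$ on $V$). Fix $x\in V$ and $A\in D(D^kf)(x)$. Since $D(D^kf)(x)$ is the convex hull of limits of derivatives at points of differentiability, and $A\mapsto|A|_{k+1,\mathrm{op}}$ is a norm and hence convex, it suffices to bound $|A|_{k+1,\mathrm{op}}\leq L$ when $A=\lim_i Dh(x_i)$ with $x_i\to x$ and $h$ differentiable at each $x_i\in V$. We may take the $x_i$ in $V$ because $V$ is open.

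At a differentiability point $x_i$, for any unit $v$ and small $t>0$ with $x_i+tv\in V$ (for boundary points of $\HB^m$, choose $v$ pointing inward, or use $t<0$; interior directions are dense and suffice by continuity of $Dh(x_i)$), we have
\[
|Dh(x_i)[v]|_{k,\mathrm{op}}=\lim_{t\to0}\frac{|h(x_i+tv)-h(x_i)|_{k,\mathrm{op}}}{t}\leq L.
\]
Using the identity $|B|_{k+1,\mathrm{op}}=\sup_{|v|\leq1}|B[v]|_{k,\mathrm{op}}$ recorded earlier, this gives $|Dh(x_i)|_{k+1,\mathrm{op}}\leq L$. Passing to the limit gives $|A|_{k+1,\mathrm{op}}\leq L$, and convexity then yields the claimed inequality, with the supremum taken over $x\in V$.

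For equality when $V$ is convex, we need the reverse bound. Fix $x\neq y$ in $V$ and set $K:=\sup_{z\in V}|D^{k+1}f(z)|_{k+1,\mathrm{op}}$. The natural approach is to integrate $Dh$ along the segment $[x,y]\subset V$, but Rademacher's theorem does not guarantee that $h$ is differentiable at almost every point of a given segment. This is the main obstacle.

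To get around it, we use a Fubini perturbation. For almost every small vector $w$, the segment $[x+w,y+w]$ meets the non-differentiability set in a one-dimensional null set. Along such a segment, $t\mapsto h(x+w+t(y-x))$ is Lipschitz and hence absolutely continuous, with derivative $Dh(\cdot)[y-x]$ almost everywhere. This gives
\[
|h(y+w)-h(x+w)|_{k,\mathrm{op}}\leq K|y-x|.
\]
Here we need the perturbed segments to stay inside $V$. If $x$ or $y$ lies on the boundary, we first replace $x,y$ by nearby points of $V$ in the interior of the half-space region, which is possible since $V$ is convex and open. Letting $w\to0$ and using continuity of $h$ then gives $L\leq K$.

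Alternatively, Lebourg's mean value theorem for the Clarke derivative \cite{clarke}, applied to the scalar functions $z\mapsto\langle h(z),\xi\rangle$ for unit-norm dual functionals $\xi$, gives the same bound directly. I would cite this as the cleaner route if it is permitted.
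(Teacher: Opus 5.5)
Your proof is correct. The first inequality is argued exactly as in the paper: difference quotients at differentiability points are bounded by $L$, then you pass to limits. You are more explicit than the paper about reducing from the convex hull to its generators and about boundary points of $\HB^m$.

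For the reverse inequality on convex $V$, your main route differs from the paper's. The paper cites Lebourg's mean value theorem \cite[Proposition 2.6.5]{clarke} along $[x,y]$. You instead use Rademacher and Fubini to find almost every parallel segment on which $D^kf$ is differentiable almost everywhere, integrate the absolutely continuous restriction, and pass to the limit by continuity. This is elementary and self-contained. You should state the one fact it silently uses: at a point $z$ where $D^kf$ is differentiable, the classical derivative lies in $D(D^kf)(z)$ (take the constant sequence in the definition). That is what bounds the integrand by the supremum $K$.

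Your approach also fixes something the paper glosses over. Clarke's theorem is stated for open convex subsets of $\RB^m$, and for vector-valued maps it only gives that $D^kf(y)-D^kf(x)$ lies in the convex hull of the vectors $A(y-x)$ with $A\in D(D^kf)(z)$ and $z\in[x,y]$. It does not give the single $s$ and single $A$ that the paper asserts. The norm bound still follows by convexity, but your reduction to interior points of $\HB^m$ is what actually handles convex $V$ meeting $\partial\HB^m$. Your scalarised alternative makes the Lebourg route fully rigorous: apply the one-point scalar Lebourg theorem to $\langle D^kf,\xi\rangle$, use the chain-rule inclusion $D(\xi\circ D^kf)(z)\subset\xi\circ D(D^kf)(z)$, and take the supremum over norming functionals $\xi$.

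The paper's version is shorter, but it relies on the citation in a slightly imprecise form; yours costs a Fubini argument and an approximation step.
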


\begin{proof}
    Denote by $L$ the Lipschitz constant of $D^kf$ over $V$. Fix $x\in V$. Let $\{x_n\}_{n\in\NB}\subset V$ be a sequence of points at which $D^kf$ is differentiable, with $x_n\rightarrow x$. Observe then that for all $n\in\NB$ one has
    \[
    |D^{k+1}f(x_n)|_{k+1,\mathrm{op}} = \sup_{u\in \overline{\RB^m(1)}}|D^{k+1}f(x_n)[u]|_{k,\mathrm{op}} = \sup_{u\in\overline{\RB^m(1)}}\bigg|\lim_{t\rightarrow 0}\frac{D^kf(x_n+tu)-D^kf(x_n)}{t}\bigg|\leq L.
    \]
    This gives the first estimate.
    
    Now suppose that $V$ is convex. For any $x,y\in V$ with $x\neq y$, since $V$ is convex the geodesic $\gamma:t\mapsto tx + (1-t)y$ is contained in $V$. By the Lebourg mean value theorem \cite[Proposition 2.6.5]{clarke}, there exists $s\in[0,1]$ and $A\in D(D^kf)(\gamma(s))$ such that
    \[
    D^kf(x) - D^kf(y) = A(x-y).
    \]
    Taking norms one sees that $|D^kf(x)-D^kf(y)|_{k,\mathrm{op}}\leq |A|_{k+1,\mathrm{op}}|x-y|$, and then taking the supremum of both sides gives
    \[
    \sup_{x,y\in V:x\neq y}\frac{|D^kf(x)-D^kf(y)|_{k,\mathrm{op}}}{|x-y|}\leq \sup_{x\in V}|D^{k+1}f(x)|_{k+1,\mathrm{op}}.
    \]
\end{proof}

The final result we now recall is Clarke's inverse function theorem for locally Lipschitz maps \cite{clarkeift}.

\begin{theorem}\label{thm:clarkeift}
    Let $U\subset\HB^m$ be an open set, and let $f:U\rightarrow\HB^m$ be a $C^{k,1}$ map. Assume that $f$ maps boundary points in $U$ to boundary points in $\HB^m$. If $x\in U$ has $\sigma_{\min}(Df(x))>0$, then there is an open neighbourhood $U'\subset U$ of $x$, an open neighbourhood $V'\subset \HB^m$ of $f(x)$ and a $C^{k,1}$ map $g:V'\rightarrow U'$ such that $f\circ g = \mathrm{id}_{V'}$ and $g\circ f = \mathrm{id}_{U'}$.
\end{theorem}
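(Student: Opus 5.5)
Since the statement is local and invariant under the boundary-preserving structure, I would reduce to the interior case first. If $x\in U\setminus\partial\HB^m$, shrink $U$ to an open ball $B\subset\RB^m$ around $x$. If $x\in\partial\HB^m$, use the hypothesis that $f$ maps boundary points to boundary points. By Kirszbraun's theorem, I extend $f$ to a locally Lipschitz map $\bar f$ on an open ball $B\subset\RB^m$ about $x$. For $k\geq 1$, I would instead use a Whitney/Seeley-type $C^{k,1}$ extension across $\{x_1=0\}$. The extension should be chosen so that the Clarke derivative at $x$ is not enlarged: every limit of $D\bar f(x_i)$ with $x_i$ in the reflected half should still lie in a set with $\sigma_{\min}>0$. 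A reflection-type extension $\bar f(x_1,x')=Rf(-x_1,x')$ is natural here, with $R$ the reflection in $\{x_1=0\}$. It works at least when $f$ preserves the hyperplane, which the boundary hypothesis provides.

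Next I would apply Clarke's inverse function theorem \cite{clarkeift} to $\bar f$. Since $\sigma_{\min}(D\bar f(x))>0$, every matrix in the compact convex set $D\bar f(x)$ is invertible, which is exactly Clarke's maximal-rank hypothesis. This gives neighbourhoods $U'',V''$ and a Lipschitz inverse $\bar g:V''\to U''$. Lower semicontinuity of $\sigma_{\min}(D\bar f(\cdot))$ (Corollary \ref{cor:sc}) lets me shrink $U''$ so that $\sigma_{\min}\geq c>0$ throughout. Then $\bar g$ is Lipschitz with constant at most $1/c$, by the inverse-function chain rule $D\bar g(y)\subset \CC\{A^{-1}:A\in D\bar f(\bar g(y))\}$.

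The regularity upgrade to $C^{k,1}$ comes next. If $k\geq1$, then $f$ is $C^1$ and $Df(x)$ is a single invertible matrix, so the classical $C^k$ inverse function theorem gives $\bar g\in C^k$. In that case $D\bar g=(D\bar f)^{-1}\circ\bar g$. Higher derivatives $D^j\bar g$ are polynomials in $D\bar g$ and $D^i\bar f\circ\bar g$ for $i\leq j$. In particular, $D^k\bar g$ is a composite of the locally Lipschitz $D^k\bar f$ with the Lipschitz map $\bar g$, multiplied by bounded continuous factors that are themselves Lipschitz: the smooth map $A\mapsto A^{-1}$ applied to Lipschitz data, since $D\bar f$ is Lipschitz whenever $k\geq1$. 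Products and compositions of locally Lipschitz maps are locally Lipschitz, so $\bar g\in C^{k,1}$. The case $k=0$ is already done.

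Finally I restrict back to the half-space. I set $U':=U''\cap\HB^m$ and $V':=\bar f(U')$, and check that $V'$ is open in $\HB^m$ and that $g:=\bar g|_{V'}$ maps into $U'$. This requires $\bar f$ to map $U''\cap\HB^m$ onto $V''\cap\HB^m$, with $\partial$ going to $\partial$ and interior to interior. I would argue this via invariance of domain together with connectedness of $U''\cap\{x_1>0\}$: the sign of the first coordinate of $\bar f$ is constant there, since that coordinate never vanishes off the boundary by injectivity and the boundary-to-boundary hypothesis, and it is positive because $f$ lands in $\HB^m$. I expect this last step, together with building an extension whose Clarke derivative at boundary points remains nonsingular, to be the main obstacle. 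The interior case is just Clarke's theorem plus the bookkeeping above.
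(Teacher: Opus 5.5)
Your outline is the paper's own: extend across $\partial\HB^m$, invert, restrict back, and for $k\geq 1$ use the classical inverse function theorem and the polynomial formula for $D^k\bar g$. The interior case is fine. So is $k\geq 1$, since a $C^{k,1}$ extension has single-valued derivative $Df(x)$ at $x$.

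The gap is the case $k=0$ at a boundary point. Clarke's theorem needs \emph{every} element of $D\bar f(x)$ to be invertible. For your reflection, $D\bar f(x)$ is the convex hull of the one-sided limits $A$ of $Df$ \emph{together with} the matrices $RAR$. Checking that each $RAR$ is nonsingular is not enough: $A$ and $RAR$ can average to a singular matrix, and preserving the hyperplane does not prevent this. For $C^1$ maps the boundary condition forces $\partial_{x'}f_1(x)=0$, so all these matrices are block lower-triangular with equal diagonal blocks. One-sided limits of $Df$ for a Lipschitz $f$ carry no such constraint.

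Concretely, let $J=\left(\begin{smallmatrix}0&-1\\1&0\end{smallmatrix}\right)$, so that $RJR=-J$, and let $t=\tan 10^\circ$. Use the polar angle $\theta\in[-90^\circ,90^\circ]$ on $\HB^2$ and define $f$ by:
\begin{itemize}
\item $A_1=\left(\begin{smallmatrix}1/t&0\\1+1/t&1\end{smallmatrix}\right)$ for $\theta\leq-80^\circ$;
\item $J$ for $-80^\circ\leq\theta\leq-10^\circ$;
\item $A_3=\left(\begin{smallmatrix}t&0\\1+t&1\end{smallmatrix}\right)$ for $\theta\geq-10^\circ$.
\end{itemize}
This is a piecewise-linear homeomorphism of $\HB^2$ fixing $\partial\HB^2$ pointwise. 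The pieces agree on the rays $\theta=-80^\circ$ and $\theta=-10^\circ$, which go to $\theta=10^\circ$ and $\theta=80^\circ$. Its Clarke derivative at $0$ is $\CC\{A_1,J,A_3\}$, and on the simplex
\[
\det(s_1A_1+s_2J+s_3A_3)=(s_1/t+s_3t)(s_1+s_3)+s_2\big(s_1(1+1/t)+s_2+s_3(1+t)\big)>0,
\]
so $\sigma_{\min}(Df(0))>0$. Yet $0=\tfrac12(J+RJR)\in D\bar f(0)$, so Clarke's theorem cannot be applied to $\bar f$. The paper's Kirszbraun extension has the same defect, since nothing controls $D\tilde f(x)$, and its restriction step is merely asserted. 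You were right to flag both obstacles, but the reflection does not resolve the first.

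The repair is to avoid Clarke's theorem across the boundary altogether.
\begin{itemize}
\item By Proposition \ref{prop:clarke}, pick a ball $B$ about $x$ with $B\cap\HB^m\subset U$ and $Df(z)\subset Df(x)+B_{\RB^{m\times m}}(0,\epsilon)$ for all $z\in B\cap\HB^m$, where $\epsilon<\sigma_{\min}(Df(x))$. This set is convex, and its elements have $\sigma_{\min}\geq c:=\sigma_{\min}(Df(x))-\epsilon>0$.
\item Apply the mean value theorem \cite[Proposition 2.6.5]{clarke} on the convex open half-ball and pass to a limit. This gives $|f(p)-f(q)|\geq c|p-q|$ on $B\cap\HB^m$, so $f$ is injective there with $1/c$-Lipschitz inverse.
\item Your reflection then handles the topology. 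Invariance of domain applied to $f$ on the open half-ball gives $f_1>0$ there. This, not injectivity plus the boundary hypothesis, is why the first coordinate cannot vanish.
\item Hence $\bar f$ is injective on $B$ and $\bar f(B)$ is open. Then $V':=\bar f(B)\cap\HB^m=f(B\cap\HB^m)$, $U':=B\cap\HB^m$ and $g:=(f|_{U'})^{-1}$ do the job.
\end{itemize}
For $k\geq1$ with a Seeley extension, the restriction step is easiest via $\bar f_1(x_1,x')=x_1\int_0^1\partial_1\bar f_1(sx_1,x')\,ds$ together with $\partial_1 f_1(x)>0$. These show that $\bar f_1$ has the sign of $x_1$ near $x$.
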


\begin{proof}
    Suppose first that $k=0$. If $x$ is not contained in the boundary, then the result follows immediately from the Clarke inverse function theorem \cite{clarkeift}. If $x$ is contained in the boundary, one may choose an open neighbourhood $\widetilde{U}$ of $x$ in $\RB^m$ and invoke the Kirszbraun extension theorem \cite[2.10.43]{federer} to extend $f$ to a $C^{0,1}$ map $\widetilde{f}\widetilde{U}\subset\RB^m$ such that $\widetilde{f}|_{\widetilde{U}\cap\HB^m}\equiv f$. Applying the Clarke inverse function theorem \cite{clarkeift} one obtains open neighbourhoods $\widetilde{U}'$ of $x$ and $\widetilde{V}'$ of $f(x)$ in $\RB^m$ and a $C^{0,1}$ function $\widetilde{g}:\widetilde{V}'\rightarrow \widetilde{U}'$ such that $\widetilde{g}\circ\widetilde{f} = \mathrm{id}_{\widetilde{U}'}$ and $\widetilde{f}\circ\widetilde{g}:=\mathrm{id}_{\widetilde{V}'}$. Then $U':=\widetilde{U}\cap\HB^m$, $V':=\widetilde{V}'\cap\HB^m$ and $g:=\widetilde{g}|_{V'}$ yield the result.
    
    When $k\geq 1$, $f$ by definition admits an extension to a $C^k$ function defined on an open set $\widetilde{U}$ in $\RB^m$ such that $U = \widetilde{U}\cap\HB^m$. The ordinary inverse function theorem, followed by restriction as above if necessary, then gives the desired open neighbourhoods $U'\subset U$ of $x$ and $V'\subset\HB^m$ of $f(x)$ and a $C^k$ function $g:V'\rightarrow U'$ which is the local inverse of $f$. To see that $g$ is $C^{k,1}$, differentiate the relation $f\circ g=\mathrm{id}$ to give a \emph{polynomial} formula for $D^kg$ in terms of the pointwise matrix inverse of $Df\circ g$ and, if $k\geq 2$, in terms also of the derivatives $D^jf\circ g$, $2\leq j\leq k$. Local Lipschitzness of $D^kg$ then follows from the local Lipschitzness of the $D^jf\circ g$, $1\leq j\leq k$.
\end{proof}

\subsection{The strong topology for $C^{k,1}$}\label{subsec:clarkewhitney}

In this section, we recount the definition of the (Lipschitz) Whitney spaces and some useful properties thereof. The definitions are standard; the results are folklore, but to our knowledge admit no explicit statements or proofs in the literature.

Given $f\in C^k(\HB^m,\RB^n)$, for any $1\leq j\leq k$ we regard the derivative $D^jf$ as a map $\RB^m\rightarrow\mathrm{Sym}^j(\RB^m,\RB^n)$.  Given $\alpha\in\{0,1\}$, consider then the following extended-real-valued seminorms defined for any compact set $K\subset\HB^m$:
\begin{equation}\label{eq:seminorm}
\|f\|^{k,\alpha}_K:=\begin{cases}\sup\{|D^{j}f(x)|_{j,\mathrm{op}}:x\in K,\quad 0\leq j\leq k\}&\text{ if $\alpha=0$}\\\sup\bigg\{|D^jf(x)|_{j,\mathrm{op}},\quad\sup_{y\in K\setminus\{x\}}\frac{|D^kf(y)-D^kf(x)|_{k,\mathrm{op}}}{|x-y|}:x\in K,\quad 0\leq j\leq k\bigg\}&\text{ if $\alpha=1$}\end{cases}
\end{equation}
Note that while $\|f\|^{k,0}_{K}<\infty$ for all compact subsets $K\subset\HB^m$ and all $f\in C^k(\HB^m,\RB^n)$, one has $\|f\|^{k,1}_K<\infty$ for all compact $K\subset\HB^m$ if and only if $D^kf:\HB^m\rightarrow\mathrm{Sym}^k(\RB^n,\RB^m)$ is locally Lipschitz. These seminorms are used to define topologies on function spaces between manifolds as follows\footnote{One can similarly take $\alpha\in(0,1]$ to define H\"{o}lder seminorms for the $k^{th}$ order derivative \cite{faria_hazard}, however these are not necessary for our purposes}.

Recall that to a $C^{\infty}$ $m$-manifold $M$ is associated a unique \emph{maximal atlas} $\mathfrak{A}_M$ defining the smooth structure of $M$, consisting of all smooth charts $\varphi:U\rightarrow M$ with open domain $U\subset\HB^m$. Let $N$ be another $C^{\infty}$ manifold with maximal atlas $\mathfrak{A}_N$. Given $k\in\NB$, denote by $C^{k,0}(M,N)$ the usual set of $C^k$ functions $M\rightarrow N$, and denote by $C^{k,1}(M,N)$ the set of $C^k$ functions $M\rightarrow N$ whose $k^{th}$ order derivative is locally Lipschitz with respect to any pairs of charts. Given $\alpha\in\{0,1\}$ and $f\in C^{k,\alpha}(M,N)$, let $\varphi:U\rightarrow M$, $\psi:V\rightarrow N$ be charts, $K\subset U$ a compact set such that $f\circ\varphi(K)\subset \psi(V)$, and $\epsilon\in(0,\infty]$. Then denote
\begin{align}\label{eq:elementaryneighbourhood}
\NC^{k,\alpha}(f;K,(U,\varphi),(V,\psi),\epsilon):=\{h\in C^{k,\alpha}(M,N):\|\psi^{-1}\circ h\circ\varphi - \psi^{-1}\circ f\circ\varphi\|^{k,\alpha}_{K}<\epsilon\}.
\end{align}
A set of the form \eqref{eq:elementaryneighbourhood} will be referred to as an \emph{elementary neighbourhood} of $f$ in $C^{k,\alpha}(M,N)$. The \emph{strong topology} on $C^{k,\alpha}(M,N)$ is then the topology with basis consisting of all \emph{basic} sets of the form
\begin{equation}\label{eq:basicset}
\NC:=\bigcap_{i\in I}\NC^{k,\alpha}(f;K_i,(U_i,\varphi_i),(V_i,\psi_i),\epsilon_i),
\end{equation}
where the family $\{K_i\}_{i\in I}$ is locally finite and $f\in C^{k,\alpha}(M,N)$.

The following lemma will be used several times in the paper.

\begin{lemma}\label{lem:charts}
    Let $M$ and $N$ be $C^{\infty}$ manifolds, $k\in\NB\cup\{0\}$ and $\alpha\in\{0,1\}$. Let $f\in C^{k,\alpha}(M,N)$, and let $\AC_M$ and $\AC_N$ be $C^{\infty}$ atlases for $M$ and $N$ respectively. Then any open neighbourhood of $f$ contains a neighbourhood defined solely with respect to the atlases $\AC_M$ and $\AC_N$. In particular, the strong topology defined with respect to the maximal atlases $\mathfrak{A}_M$ and $\mathfrak{A}_N$ coincides with that defined with respect only to $\AC_M$ and $\AC_N$.
\end{lemma}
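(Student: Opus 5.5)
The plan is to reduce the statement to a comparison between the two families of seminorms, and to handle one elementary neighbourhood at a time. Fix $f$ and an open set $\mathcal{O}\ni f$ in the strong topology built from the maximal atlases. By definition, $\mathcal{O}$ contains a basic set $\NC=\bigcap_{i\in I}\NC^{k,\alpha}(f;K_i,(U_i,\varphi_i),(V_i,\psi_i),\epsilon_i)$ with $\{K_i\}$ locally finite. We want a basic set $\NC'\subset\NC$ all of whose charts come from $\AC_M$ and $\AC_N$, and whose compact sets are still locally finite.

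For each $i$ we proceed as follows.
\begin{enumerate}
\item Cover the compact set $\varphi_i(K_i)$ by finitely many charts $(U'_{ij},\varphi'_{ij})$ of $\AC_M$.
\item Shrink these so that $f\circ\varphi'_{ij}$ lands in a single chart $(V'_{ij},\psi'_{ij})$ of $\AC_N$ together with $\psi_i(V_i)$.
\item Split $K_i$ into compact pieces $K_{ij}$ with $\varphi_i(K_{ij})\subset\varphi'_{ij}(U'_{ij})$, using a Lebesgue number argument.
\item Set $K'_{ij}:=(\varphi'_{ij})^{-1}\varphi_i(K_{ij})$.
\end{enumerate}
On $K_{ij}$ we then write
\[
\psi_i^{-1}\circ h\circ\varphi_i=(\psi_i^{-1}\psi'_{ij})\circ\big((\psi'_{ij})^{-1}h\varphi'_{ij}\big)\circ\big((\varphi'_{ij})^{-1}\varphi_i\big),
\]
where the outer transition maps are $C^\infty$ and fixed.

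The core estimate is a composition lemma. Let $\Phi,\Psi$ be smooth and fixed, and let $g$ vary with $\|g-g_0\|^{k,\alpha}_{K'}<\delta$. Then
\[
\|\Psi\circ g\circ\Phi-\Psi\circ g_0\circ\Phi\|^{k,\alpha}_{K}\le C\delta,
\]
with $C$ depending only on $g_0$, on $J^{k+1}\Psi$ on a compact neighbourhood of $g_0(K')$, and on $J^{k+1}\Phi$. To prove it we expand derivatives by Faà di Bruno. Each term in $D^j(\Psi\circ g\circ\Phi)$ is multilinear in derivatives of $g$ up to order $j$, composed with derivatives of $\Psi$ evaluated along $g$. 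The differences are then controlled by the mean value theorem applied to $D^l\Psi$ between $g$ and $g_0$, which is why one extra derivative of $\Psi$ is used. For $\delta$ small, $g(K')$ stays inside a fixed compact neighbourhood of $g_0(K')$.

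For $\alpha=1$ we also need the Lipschitz seminorm of the top derivative. Products and compositions of Lipschitz maps on compact sets are Lipschitz, with constants bounded by sums of products of sup norms and Lipschitz constants. So the same bound with $C\delta$ holds, provided we measure Lipschitz differences of $D^k$ across all of $K$. Here Proposition \ref{prop:clarkelipschitz} can be used to pass between Lipschitz constants and Clarke derivatives on convex pieces, if one prefers to work with $D^{k+1}$. Choosing $\delta_{ij}=\epsilon_i/C_{ij}$ then gives the inclusion of the finite intersection over $j$ into $\NC^{k,\alpha}(f;K_i,\dots,\epsilon_i)$.

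The main obstacle is the Lipschitz seminorm for $\alpha=1$ after splitting $K_i$ into pieces, because pairs $x,y$ lying in different pieces $K_{ij},K_{ij'}$ are not directly controlled. I would handle this by a dichotomy on the distance $|x-y|$.
\begin{enumerate}
\item If $|x-y|$ is less than a Lebesgue number of the cover, both points lie in a common piece, after enlarging the pieces to overlap.
\item Otherwise $|x-y|$ is bounded below. In that case the difference quotient is bounded by twice the sup-norm of $D^k$ of the difference divided by the Lebesgue number, and this is again $O(\delta)$.
\end{enumerate}

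Finally, local finiteness. Each $K_i$ yields finitely many $K'_{ij}$, and their images $\varphi'_{ij}(K'_{ij})$ lie inside $\varphi_i(K_i)$. Hence the family $\{\varphi'_{ij}(K'_{ij})\}$ inherits local finiteness from $\{\varphi_i(K_i)\}$. The intersection of the new elementary neighbourhoods is therefore a basic set defined using only $\AC_M$ and $\AC_N$, and it is contained in $\NC\subset\mathcal{O}$.

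The reverse containment of topologies is trivial, since $\AC_M\subset\mathfrak{A}_M$ and $\AC_N\subset\mathfrak{A}_N$. This gives the stated coincidence of the two strong topologies.
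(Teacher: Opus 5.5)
Your strategy is the paper's: cut the compact set into pieces lying in charts of $\AC_M$ and $\AC_N$, then push the seminorms through the fixed transition maps. Your Lebesgue-number dichotomy for pairs in different pieces and your local-finiteness remark are details the paper leaves out, and for $\alpha=0$ (or $k=0$) your argument works. For $\alpha=1$, $k\ge 1$, however, your composition lemma is false on the sets you apply it to. Its proof needs Lipschitz bounds over $K'$ on $D^j(g-g_0)$ for $j<k$ as well. Already for $k=1$, the expansion of $D\Psi(g)\,Dg-D\Psi(g_0)\,Dg_0$ contains $D\Psi(g(z))-D\Psi(g_0(z))$, whose Lipschitz constant involves $\mathrm{Lip}_{K'}(g-g_0)$. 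But $\|\cdot\|^{k,1}_{K'}$ only contains the Lipschitz constant of $D^k$. The lower ones follow from sup bounds on the next derivative via the mean value theorem only when $K'$ is convex, and your $K'_{ij}=(\varphi'_{ij})^{-1}\varphi_i(K_{ij})$ are transported pieces of an arbitrary compact $K_i$, so they need not be.

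Here is a concrete failure. Take $M=N=\RB$, $\AC_M=\AC_N=\{\id\}$, $f=\id$, a target chart $\psi$ with $\psi^{-1}(w)=w+w^2$ on a neighbourhood of $[0,1]$, and $K=\{0\}\cup\{1/n,\,1/n+\epsilon_n:n\ge 2\}$ with $\epsilon_n\ll n^{-2}$. Let $\beta$ be an even bump with $\beta(0)=1$ and $\mathrm{supp}\,\beta\subset(-1/2,1/2)$, and set $h_n(x):=x+\delta_n\beta((x-1/n)/\epsilon_n)$ with $\delta_n:=\sqrt{\epsilon_n}$. On $K$, $h_n-f$ vanishes except for the value $\delta_n$ at $1/n$, and $D(h_n-f)$ vanishes identically. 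So $\|h_n-f\|^{1,1}_{K'}\le\delta_n\to0$ for every $K'\subset K$, i.e.\ $h_n$ eventually lies in every neighbourhood your construction produces. Yet $D(\psi^{-1}h_n-\psi^{-1}f)$ equals $2\delta_n$ at $1/n$ and $0$ at $1/n+\epsilon_n$, so $\|\psi^{-1}h_n-\psi^{-1}f\|^{1,1}_K\ge 2/\sqrt{\epsilon_n}\to\infty$.

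The repair is to stop requiring the new compact sets to be images of pieces of $K_i$. Cover $\varphi_i(K_i)$ by finitely many $\varphi'_a(\mathrm{int}\,B_a)$, where $B_a$ is a small closed coordinate ball (cut by $\HB^m$, hence convex) in an $\AC_M$-chart, chosen so that $f\varphi'_a(B_a)$ lies in one $\AC_N$-chart domain. On $B_a$ the mean value theorem gives $\mathrm{Lip}_{B_a}(D^j u)\le\sup_{B_a}|D^{j+1}u|$ for $j<k$. Hence $\|g-g_0\|^{k,1}_{B_a}<\delta$ controls everything your Fa\`a di Bruno estimate needs on $K_i\cap\varphi_i^{-1}\varphi'_a(B_a)$. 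Your dichotomy, with a Lebesgue number for the cover of $K_i$ by the sets $\varphi_i^{-1}\varphi'_a(\mathrm{int}\,B_a)$, then handles the remaining pairs. What you lose is automatic local finiteness: you must choose the $B_a$ inside a locally finite family of open neighbourhoods of the $\varphi_i(K_i)$, which exists by paracompactness. The paper's proof uses exactly your decomposition (its $K_{ij}=\varphi_i^{-1}(C_{ij})$ with $C_{ij}\subset\varphi(K)$) and merely asserts that suitable $\epsilon_{ij}$ exist, so the example above shows it has the same gap.
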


\begin{proof}
    Fix $f\in C^{k,\alpha}(M,N)$, any charts $\varphi:U\rightarrow M,\psi:V\rightarrow N$, compact set $K\subset U$ such that $f\circ\varphi(K)\subset\psi(V)$ and $\epsilon>0$. It suffices to show that there exists a finite intersection of elementary neighbourhoods defined with respect to $\AC_M$ and $\AC_N$ that is contained in $\NC^{k,\alpha}(f;K,(U,\varphi),(V,\psi),\epsilon)$.
    
    Denote $\AC_M=\{\varphi_i:U_i\rightarrow M\}_{i\in I}$ and $\AC_N = \{\psi_i:V_i\rightarrow N\}_{i\in I}$. Since $K$ is compact, there are finite subsets $I_M, I_N\in\NB$ such that $\varphi(K)$ admits the cover $\{\varphi_i(U_i)\}_{i\in I_M}$, while $f\circ\varphi(K)$ admits the cover $\{\psi_j(V_j)\}_{j\in I_N}$. For each $(i,j)\in I_M\times I_N$, define $W_{ij}:=\varphi(K)\cap\varphi_i(U_i)\cap f^{-1}\psi_i(V_i)$, a relatively open subset of the compact set $\varphi(K)$. Since $\varphi(K)$ is regular as a topological space and $W_{ij}\subset\varphi(K)$ is open, for each $x\in W_{ij}$ one can find relatively open sets $O_{x,ij}\subset \varphi(K)$ containing $x$ and $P_{x,ij}\subset\varphi(K)$ containing $\varphi(K)\setminus W_{ij}$ such that the closure $\overline{O_{x,ij}}^{\varphi(K)}$ relative to $\varphi(K)$ satisfies
    \[
    \overline{O_{x,ij}}^{\varphi(K)}\subset \varphi(K)\setminus P_{x,ij}\subset \varphi(K)\setminus(\varphi(K)\setminus W_{ij})\subset W_{ij}.
    \]
    Since $\varphi(K)$ is compact, for each $(i,j)\in I_M\times I_N$ there is a finite set $I_{ij}$ such that the finite family $\{O_{x_l,ij}\}_{(i,j)\in I_M\times I_N,l\in I_{ij}}$ covers $\varphi(K)$. Now, the sets $C_{ij}:=\bigcup_{l\in I_{ij}}\overline{O_{x_l,ij}}\subset W_{ij}$ are closed in $\varphi(K)$, hence are compact in $M$. Defining $K_{ij}:=\varphi_i^{-1}(C_{ij})$, one then has $K_{ij}\subset U_i$ compact and $f(\varphi(K_{ij}))\subset \psi(V_{ij})$. Finally, positive numbers $\epsilon>0$ may be chosen sufficiently small that for any $h\in C^{k,\alpha}(M,N)$,
    \[
    \|\psi_j^{-1}\circ h\circ\varphi_i-\psi_j^{-1}\circ f\circ\varphi_i\|^{k,\alpha}_{K_{ij}}<\epsilon_{ij}\,\,\forall (i,j)\in I_M\times I_N\Longrightarrow \|\psi^{-1}\circ h\circ\varphi-\psi^{-1}\circ f\circ \varphi\|^{k,\alpha}_K<\epsilon.
    \]
    Hence $\bigcap_{(i,j)\in I_M\times I_N}\NC^{k,\alpha}(f;K_{ij},(U_i,\varphi_i),(V_j,\psi_j),\epsilon_{ij})\subset \NC^{k,\alpha}(f;K,(U,\varphi),(V,\psi),\epsilon)$ as desired.
\end{proof}

An immediate corollary is that we may use the Clarke derivative in treating the strong topology of $C^{k,1}$ in a manner that resembles the seminorms defining the strong topology of $C^{k+1,0}$. Specifically, consider the seminorm defined on $C^{k,1}(\RB^m,\RB^n)$ defined by
\[
\|f\|^{k,\text{Clarke}}_{K}:=\sup\{|D^jf(x)|_{j,\mathrm{op}}:x\in K,\quad 0\leq j\leq k+1\},
\]
where $D^{k+1}f(x)$ denotes the Clarke derivative at $x$ and, as in \eqref{eq:clarkenorm}, $|D^{k+1}f(x)|_{k+1,\mathrm{op}}:=\sup_{A\in D^{k+1}f(x)}|A|_{k+1,\mathrm{op}}$.

\begin{corollary}\label{cor:clarkeneighbourhood}
    Let $M$ and $N$ be $C^{\infty}$ manifolds, $k\in\NB\cup\{0\}$. Then the strong topology on $C^{k,1}(M,N)$ defined using the seminorms $\|\cdot\|^{k,1}_K$ in \eqref{eq:elementaryneighbourhood} coincides with that defined using the seminorms $\|\cdot\|^{k,\text{Clarke}}_K$.
\end{corollary}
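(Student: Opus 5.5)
The plan is to show that the two families of seminorms, $\|\cdot\|^{k,1}_K$ and $\|\cdot\|^{k,\text{Clarke}}_K$, dominate each other locally. Together with Lemma \ref{lem:charts}, this shows that every elementary neighbourhood of one type contains a basic neighbourhood of the other type.

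By Lemma \ref{lem:charts}, we may fix atlases for $M$ and $N$ consisting of charts whose domains are convex open subsets of $\HB^m$, for example small balls intersected with $\HB^m$. It then suffices to compare the two seminorms applied to $h_\varphi-f_\varphi:=\psi^{-1}\circ h\circ\varphi-\psi^{-1}\circ f\circ\varphi$ on compact sets $K$ contained in such a convex domain.

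\textbf{Step 1: the Clarke seminorm is dominated by the Lipschitz seminorm.} Fix $x\in K$. Proposition \ref{prop:clarkelipschitz} controls $|D^{k+1}(h_\varphi-f_\varphi)(x)|$ by the Lipschitz constant of $D^k(h_\varphi-f_\varphi)$ on any open $V\ni x$. The difficulty is that $\|\cdot\|^{k,1}_K$ only records difference quotients with both points in $K$. To get around this, enlarge $K$ to a compact convex neighbourhood $K'$ of $K$ inside the chart domain and use $\|\cdot\|^{k,1}_{K'}$, which is again an elementary seminorm. Given an elementary Clarke neighbourhood $\{h:\|h_\varphi-f_\varphi\|^{k,\text{Clarke}}_K<\epsilon\}$, the Lipschitz neighbourhood $\{h:\|h_\varphi-f_\varphi\|^{k,1}_{K'}<\epsilon\}$ is contained in it. This uses Proposition \ref{prop:clarkelipschitz} applied on $V=\mathrm{int}_{\HB^m}K'$, together with the fact that Clarke derivatives at points of $K$ only depend on values in $V$.

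\textbf{Step 2: the Lipschitz seminorm is dominated by the Clarke seminorm.} Again enlarge $K$ to a compact convex $K'$ in the chart domain, and let $V$ be its relative interior. By the equality case of Proposition \ref{prop:clarkelipschitz} (the Lebourg mean value theorem on the convex set $V$), the Lipschitz quotient of $D^k(h_\varphi-f_\varphi)$ over $K\subset V$ is bounded by $\sup_{V}|D^{k+1}(h_\varphi-f_\varphi)|$, which is at most $\|h_\varphi-f_\varphi\|^{k,\text{Clarke}}_{K'}$. The lower-order terms $j\le k$ coincide in both seminorms. Hence $\|\cdot\|^{k,1}_K\le\|\cdot\|^{k,\text{Clarke}}_{K'}$.

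\textbf{Step 3: pass from elementary to basic neighbourhoods.} Local finiteness of the family $\{K_i\}$ is preserved if each $K_i$ is replaced by a slightly larger $K_i'$. To arrange this, choose the enlargements inside a fixed locally finite refinement, for instance by covering each $K_i$ by finitely many small convex compacta.

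The main obstacle is the boundary case together with convexity: a mean-value argument requires convex sets, while $K$ may be non-convex and may meet $\partial\HB^m$. Both issues are handled by the finite covering of each $K_i$ by small convex compacta (balls intersected with $\HB^m$, which are convex), and then applying Steps 1 and 2 on each piece.
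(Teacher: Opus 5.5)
Your proposal is correct and follows essentially the paper's argument: reduce via Lemma~\ref{lem:charts} to convex chart domains, enlarge $K$ to a convex compact set (the paper's $\overline{W}$), and use the inequality of Proposition~\ref{prop:clarkelipschitz} for one direction and its convex equality case for the other. Your Step~3 on keeping the enlarged family locally finite is a point the paper passes over. The finite covering by small convex pieces is unnecessary: in a convex chart, a slight thickening of the convex hull of $K$ already suffices, and working with pieces would force you to bound separately the difference quotients between points in different pieces.
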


\begin{proof}
    Fix $f\in C^{k,1}(M,N)$. It suffices to show that any elementary neighbourhood about $f$ defined in terms of the seminorms $\|\cdot\|^{k,1}$ contains an elementary neighbourhood defined in terms of the seminorms $\|\cdot\|^{k,\text{Clarke}}$, and vice versa. By Lemma \ref{lem:charts}, it moreover suffices to consider elementary neighbourhoods defined in terms of an atlas for $M$ whose chart domains are \emph{convex}, and whose images under $f$ are entirely contained in charts for $N$.
    
    Let us therefore fix $f\in C^{k,1}(M,N)$, a chart $\varphi:U\rightarrow M$ with convex domain and a chart $\psi:V\rightarrow N$ with $f\circ \varphi(U)\subset\psi(V)$. Fix a compact set $K\subset U$ and $\epsilon>0$. If $\NC^1(f;K,(U,\varphi),(V,\psi),\epsilon)$ is defined with respect to $\|\cdot\|^{k,1}_K$, choose any convex open set $W$ containing $K$ whose closure $\overline{W}\subset U$ is compact. For all $\delta$ sufficiently small, any $g\in C^{k,1}(M,N)$ with $\sup_{x\in\overline{W}}|(\psi^{-1}\circ g\circ \varphi -\psi^{-1}\circ f\circ\varphi)(x)|<\delta$ has $g\circ\varphi(\overline{W})\subset\psi(V)$. One then has:
    \begin{align*}
        \sup_{x\in\overline{W}}|D^{k+1}(\psi^{-1}\circ g\circ\varphi-\psi^{-1}\circ f\circ\varphi)(x)|_{k+1,\mathrm{op}}&\geq \sup_{x\in W}|D^{k+1}(\psi^{-1}\circ g\circ\varphi-\psi^{-1}\circ f\circ\varphi)(x)|_{k+1,\mathrm{op}}\\&=\sup_{x,y\in W:x\neq y}\frac{|D^kf(y)-D^kf(x)|_{k,\mathrm{op}}}{|x-y|}\\&\geq \sup_{x,y\in K:x\neq y}\frac{|D^kf(y)-D^kf(x)|_{k,\mathrm{op}}}{|x-y|},
    \end{align*}
    where the first line follows from upper-semicontinuity of $x\mapsto |D^{k+1}(\psi^{-1}\circ g\circ\varphi-\psi^{-1}\circ f\circ\varphi)(x)|_{k+1,\mathrm{op}}$ and the second line follows from Proposition \ref{prop:clarkelipschitz}. Thus, for any $\delta>0$ sufficiently small, the elementary neighbourhood $\NC^{\text{Clarke}}(f;\overline{W},(U,\varphi),(V,\psi),\delta)$ defined with respect to $\|\cdot\|^{k,\text{Clarke}}_{\overline{W}}$ is contained in $\NC^1(f;K,(U,\varphi),(V,\psi),\epsilon)$. A similar argument (but without having to invoke upper-semicontinuity) shows that $\NC^1(f;\overline{W},(U,\varphi),(V,\psi),\delta)\subset\NC^{\text{Clarke}}(f;K,(U,\varphi),(V,\psi),\epsilon)$ for all $\delta$ sufficiently small, thus giving the result.
\end{proof}

\subsection{Openness of diffeomorphisms in $C^{k,1}$}\label{subsec:open}

Let $M$ and $N$ be $C^{\infty}$ manifolds. If $M$ and $N$ have boundary, $k\in\NB\cup\{0\}$ and $\alpha\in\{0,1\}$, denote by $C^{k,\alpha}_{\partial}(M,N)$ the set of $C^{k,\alpha}$-functions $M\rightarrow N$ which map $\partial M$ to $\partial N$. The purpose of this section is to prove that the ``diffeomorphisms" in $C^{k,1}_{\partial}(M,N)$ are \emph{open} therein. Our proof is an adaptation of the corresponding result \cite[Chapter 2, Theorem 1.6]{hirsch} in the $C^k$ setting together with the formal similarity of the seminorms $\|\cdot\|^{k,\text{Clarke}}_K$ defining the strong topology on $C^{k,1}_{\partial}(M,N)$ with the seminorms $\|\cdot\|^{k+1,0}_K$ defining the strong topology in $C^{k+1,0}_{\partial}(M,N)$. In particular, as we will see, being a diffeomorphism is an open criterion in $C^{0,1}_{\partial}(M,N)$.

\begin{definition}
    Let $\sigma_j(A)$ denote the $j^{th}$-largest singular value of a matrix $A$. Given a $C^{\infty}$ $m$-manifold $M$, a $C^{\infty}$ $n$-manifold $M$, $k\in\NB\cup\{0\}$, define:
    \begin{enumerate}
        \item $\mathrm{Sub}^{k,1}(M,N)$ the $C^{k,1}$-\textbf{submersions}, namely those elements $f\in C^{k,1}(M,N)$ for which, given any coordinate charts $(U,\varphi)$ of $M$ and $(V,\psi)$ of $N$ such that $f\circ\varphi(U)\subset\psi(V)$ one has
        \[
        \inf_{A\in D(\psi^{-1}\circ f\circ\varphi)(x)}\sigma_{n}(A)>0,\qquad\forall x\in U.
        \]
        \item $\mathrm{Imm}^{k,1}(M,N)$ the $C^{k,1}$-\textbf{immersions}, namely those elements $f\in C^{k,1}(M,N)$ for which, given any coordinate charts $(U,\varphi)$ of $M$ and $(V,\psi)$ of $N$ such that $f\circ\varphi(U)\subset\psi(V)$, one has
        \[
        \inf_{A\in D(\psi^{-1}\circ f\circ\varphi)(x)}\sigma_m(A)>0,\qquad\forall x\in U.
        \]
        \item $\mathrm{Emb}^{k,1}(M,N)$ the $C^{k,1}$-\textbf{embeddings}, namely those elements $f\in \mathrm{Imm}^{k,1}(M,N)$ that are homeomorphic onto their image.
        \item $\mathrm{Prop}^{\Lip}(M,N)$ the $C^{k,1}(M,N)$-\textbf{proper} maps, namely those elements $f\in C^{k,1}(M,N)$ for which $f^{-1}K$ is compact in $M$ for any compact set $K$ in $N$.
        \item $\mathrm{Diff}^{k,1}(M,N)$ the $C^{k,1}$-\textbf{diffeomorphisms}, namely those elements $f\in C^{k,1}(M,N)$ that are invertible with inverse contained in $C^{k,1}(M,N)$.
    \end{enumerate}
\end{definition}

The following proposition gives an equivalent characterisation of the $C^{k,1}$-diffeomorphisms.

\begin{proposition}\label{prop:intersection}
    Let $M$ and $N$ be $C^{\infty}$ manifolds, and $k\in\NB\cup\{0\}$. Then 
    \[
    \mathrm{Diff}^{k,1}(M,N) = \mathrm{Sub}^{k,1}(M,N)\cap \mathrm{Emb}^{k,1}(M,N)\cap \mathrm{Prop}^{k,1}(M,N)\cap C^{k,1}_{\partial}(M,N).
    \]
\end{proposition}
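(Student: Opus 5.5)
We prove the two inclusions separately; the inclusion $\subseteq$ is essentially definitional. Let $f\in\mathrm{Diff}^{k,1}(M,N)$ with inverse $g\in C^{k,1}(N,M)$. Then $f$ is a homeomorphism, hence a homeomorphism onto its image and proper, since $f^{-1}K=g(K)$ is compact. It maps $\partial M$ to $\partial N$ by invariance of domain, because interior points have Euclidean neighbourhoods which a homeomorphism must carry to Euclidean neighbourhoods. For the singular value conditions we work in charts and use the Clarke chain rule on $g\circ f=\mathrm{id}$ and $f\circ g=\mathrm{id}$. At points $x$ where $f$ is differentiable and $g$ is differentiable at $f(x)$, this gives $Dg(f(x))Df(x)=I$. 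To pass to arbitrary elements of the Clarke derivative we need uniform control: $g$ is locally Lipschitz with constant $L$ near $f(x)$, so for every differentiability point $y$ of $f$ near $x$ we have $|Df(y)v|\geq L^{-1}|v|$, which follows from $|v|=\lim|g(f(y+tv))-g(f(y))|/t\leq L|Df(y)v|$. This bound is preserved under limits. It is also preserved under convex combinations, because the symmetric reverse bound $|Dg\,w|\geq L'^{-1}|w|$ together with $m=n$ (forced by $f$ being a homeomorphism) shows that every limit matrix $A$ is invertible with $|A^{-1}|\leq L$. We then make this rigorous via Clarke's result that the Clarke Jacobian of a bi-Lipschitz homeomorphism is nonsingular: if $A\in Df(x)$ and $A v=0$, the Lebourg mean value theorem combined with the lower bound $|f(x')-f(x'')|\geq L^{-1}|x'-x''|$ gives a contradiction. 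Hence $\sigma_n$ and $\sigma_m$ are bounded below on $Df(x)$, so $f$ lies in $\mathrm{Sub}^{k,1}\cap\mathrm{Imm}^{k,1}$, and therefore in $\mathrm{Emb}^{k,1}$.

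For the inclusion $\supseteq$, take $f$ in the intersection. Being both an immersion and a submersion forces $m=n$ and $\sigma_{\min}(Df(x))>0$ everywhere. Because $f$ maps boundary to boundary, Theorem \ref{thm:clarkeift} gives local $C^{k,1}$ inverses around every point. Since $f$ is an embedding, $f$ is injective, and it remains to show surjectivity onto $N$, taking $N$ connected as assumed in Section \ref{sec:notation}. The image $f(M)$ is closed, since proper continuous maps into a manifold are closed maps. The image is also open in $N$: at interior points this follows from the local inverse theorem, and at a boundary point $f(x)\in\partial N$ it requires $x\in\partial M$. Here the boundary behaviour of the inverse function theorem is used. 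If $x$ were interior, $f$ would be a local homeomorphism from an open ball onto a neighbourhood of $f(x)$ in $\RB^m$ that lies inside $\HB^m$, which is impossible. So $x\in\partial M$, and the Kirszbraun-extension version of Theorem \ref{thm:clarkeift} shows that $f$ maps a half-ball neighbourhood onto a relative neighbourhood of $f(x)$. Hence $f(M)$ is nonempty, open and closed, so $f(M)=N$. The global inverse $g=f^{-1}$ then agrees locally with the $C^{k,1}$ local inverses, and therefore $g\in C^{k,1}(N,M)$.

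The main obstacle is the boundary openness step. Theorem \ref{thm:clarkeift} as stated produces an open $V'\subset\HB^m$ containing $f(x)$, but we must also ensure that the extended inverse maps $V'$ back into $\HB^m$, that is, that the restriction of $\widetilde g$ lands in $U\cap\HB^m$. I would handle this using boundary preservation together with invariance of domain applied to $\widetilde f$. The extension $\widetilde f$ is a local homeomorphism of $\RB^m$ carrying the hypersurface $\partial\HB^m$ into itself, so it sends the open half-ball either into $\HB^m$ or into the complement of $\HB^m$. Since $f$ itself takes values in $\HB^m$, it must be the former.
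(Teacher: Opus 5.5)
\textbf{The gap is in the inclusion $\subseteq$ for $k=0$,} at the passage from limit matrices to the full Clarke derivative. Your bound $|Df(y)v|\geq L^{-1}|v|$ at differentiability points is correct, and it shows that every \emph{limit} matrix is invertible. But $Df(x)$ is the convex hull of these limits, and invertibility of each limit says nothing about their averages. The Lebourg mean value theorem cannot supply this. It only gives \emph{some} $A'$ in the convex hull of Clarke derivatives along the segment with $f(x')-f(x'')=A'(x'-x'')$, which does not contradict the existence of a particular singular $A\in Df(x)$.

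Nor is there a theorem of Clarke to this effect (his inverse function theorem is the converse), and the claim is in fact false. Identify $\RB^2$ with $\mathbb{C}$ and set $f(z)=z\,e^{i\log|z|}$, $f(0)=0$, whose inverse is $w\mapsto w\,e^{-i\log|w|}$. For $z=re^{i\theta}\neq 0$ one computes $Df(z)=R_{\theta+\log r}\,J\,R_{-\theta}$, where $R_\alpha$ is the rotation by $\alpha$ and $J=\begin{pmatrix}1&0\\1&1\end{pmatrix}$; the inverse has the same form with $J^{-1}$. Both derivatives are bounded, so $f$ is a bi-Lipschitz homeomorphism of $\RB^2$. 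Along $z_j=e^{-2\pi j}$ and $z_j'=e^{\pi-2\pi j}$ one has $Df(z_j)=J$ and $Df(z_j')=-J$, hence $0=\tfrac12\big(J+(-J)\big)\in Df(0)$.

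So $f\in\mathrm{Diff}^{0,1}(\RB^2,\RB^2)$ but $f\notin\mathrm{Sub}^{0,1}(\RB^2,\RB^2)$. Since $\mathrm{Emb}^{0,1}\subset\mathrm{Imm}^{0,1}$ by definition, $f\notin\mathrm{Emb}^{0,1}(\RB^2,\RB^2)$ either. The inclusion $\subseteq$ is therefore false as stated when $k=0$, and no argument can close this step. The paper's one-line justification (``its inverse is also $C^{k,1}$'') has the same defect. For $k\geq 1$ the step is immediate: $f$ is $C^1$, its Clarke derivative is $\{Df(x)\}$, and the chain rule on $g\circ f=\mathrm{id}$ gives invertibility.

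Your inclusion $\supseteq$ follows the paper's route (dimension count, local inverses from Theorem \ref{thm:clarkeift}, image open and closed, $N$ connected) and is fine. For $k=0$ it is the half that survives, and it is all that the application in Section \ref{sec:existence} needs, since Proposition \ref{prop:diffopen} is only invoked there near the smooth diffeomorphism $Df|_{E_{cu}}$, which lies in the right-hand side.

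If you take Theorem \ref{thm:clarkeift} as stated, openness at a boundary point is immediate from $V'=f(g(V'))\subset f(U')$. Your final paragraph is therefore really a repair of that theorem's proof, and as written it controls the wrong half-ball. The inner one lands in $\HB^m$ trivially; what you need is for $\widetilde f$ to send the outer one off $\HB^m$. Preserving $\partial\HB^m$ does not force this by itself: you also need invariance of domain in $\partial\HB^m$, to know that $f$ maps the boundary part of the ball onto a neighbourhood of $f(x)$ in $\partial\HB^m$. The repair also presupposes that $\widetilde f$ is a local homeomorphism at $x$, which an arbitrary Kirszbraun extension need not be ($t\mapsto|t|$ extends $t\mapsto t$ from $[0,\infty)$).
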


\begin{proof}
    Any $C^{k,1}$-diffeomorphism is certainly a homeomorphism onto its image and proper, and necessarily maps $\partial M$ to $\partial N$. That such a map is also a $C^{k,1}$-submersion follows from the fact that its inverse is also $C^{k,1}$.

    Conversely, suppose that $f\in C^{k,1}_{\partial}(M,N)$ is simultaneously submersive, a topological embedding, and proper. Since it is a topological embedding, $\dim(N)\geq\dim (M)$. Since it is submersive, $\dim(M)\geq\dim(N)$ so in fact $\dim(M)=\dim(N)$. Submersivity now enables the Clarke inverse function theorem (Theorem \ref{thm:clarkeift}) to apply to give that $f$ is a local $C^{k,1}$-diffeomorphism. Since $f$ is a homeomorphism onto its image, it is thus in fact a $C^{k,1}$-diffeomorphism onto its image. It follows that $f(M)$ is open in $N$. Since $f$ is proper and $N$ is a locally compact Hausdorff space, $f$ is closed and so $f(M)$ is also closed in $N$, which implies finally that $f(M) = N$, so that $f\in \mathrm{Diff}^{k,1}(M,N)$ as claimed.
\end{proof}

We will show that for any $C^{\infty}$ manifolds $M$ and $N$, $\mathrm{Sub}^{k,1}(M,N)$, $\mathrm{Emb}^{k,1}(M,N)$ and $\mathrm{Prop}^{k,1}(M,N)$ are all open in $C^{k,1}(M,N)$. It will then follow from Proposition \ref{prop:intersection} that $\mathrm{Diff}^{k,1}(M,N)$ is open in $C^{k,1}_{\partial}(M,N)$. Note that if $\NC$ is any basic subset in $C^{0,1}(M,N)$ (see \eqref{eq:basicset}), then $\NC\cap C^{k,1}(M,N)$ is open for any $k\geq 0$. Thus, since one has $\mathrm{Sub}^{k,1}(M,N) = \mathrm{Sub}^{0,1}(M,N)\cap C^{k,1}(M,N)$, $\mathrm{Emb}^{k,1}(M,N) = \mathrm{Emb}^{0,1}(M,N)\cap C^{k,1}(M,N)$ and $\mathrm{Prop}^{k,1}(M,N) = \mathrm{Prop}^{0,1}(M,N)\cap C^{k,1}(M,N)$, it suffices to prove openness of these subsets for $k=0$.

\begin{proposition}
    The subsets $\mathrm{Sub}^{k,1}(M,N)$ and $\mathrm{Imm}^{k,1}(M,N)$ of $C^{k,1}(M,N)$ are open.
\end{proposition}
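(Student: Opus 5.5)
The plan is to adapt the proof of \cite[Chapter 2, Theorem 1.1]{hirsch} (openness of $C^1$ submersions and immersions in the strong topology), with the pointwise derivative replaced by the Clarke derivative and the $C^1$ seminorms replaced by $\|\cdot\|^{0,\text{Clarke}}_K$. As noted just before the statement, it suffices to treat $k=0$, since $\mathrm{Sub}^{k,1} = \mathrm{Sub}^{0,1}\cap C^{k,1}$ and likewise for immersions, and basic sets of $C^{0,1}$ intersect $C^{k,1}$ in open sets. I will write the argument for submersions ($\sigma_n$); the immersion case is identical with $\sigma_m$ in place of $\sigma_n$.

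Fix $f\in\mathrm{Sub}^{0,1}(M,N)$. First choose a locally finite family of charts $(U_i,\varphi_i)$ of $M$ with convex domains, charts $(V_i,\psi_i)$ of $N$ with $f\circ\varphi_i(U_i)\subset\psi_i(V_i)$, and compact sets $K_i\subset U_i$ such that $\{\varphi_i(K_i)\}$ is locally finite and covers $M$. By Lemma \ref{lem:charts} and Corollary \ref{cor:clarkeneighbourhood}, I may work with elementary neighbourhoods defined via the Clarke seminorms in these charts.

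On each $K_i$, the function $x\mapsto \inf_{A\in D(\psi_i^{-1}f\varphi_i)(x)}\sigma_n(A)$ is lower-semicontinuous by Corollary \ref{cor:sc}, since $\sigma_n$ is continuous. It is positive by hypothesis, so it attains a positive minimum $2\delta_i$ on the compact set $K_i$. Take $\epsilon_i\le\delta_i$ small enough that $h\in\NC^{\text{Clarke}}(f;K_i,\dots,\epsilon_i)$ forces $h\circ\varphi_i(K_i)\subset\psi_i(V_i)$; the $C^0$ part of the seminorm handles this.

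The key step is to show that for such $h$, the difference $d:=\psi_i^{-1}h\varphi_i-\psi_i^{-1}f\varphi_i$ satisfies
\[
D(\psi_i^{-1}h\varphi_i)(x)\subset D(\psi_i^{-1}f\varphi_i)(x)+D d(x).
\]
This inclusion is the Clarke sum rule \cite[Proposition 2.6.2]{clarke}; it also follows directly from the definition by passing to the full-measure set where both maps are differentiable, since the Clarke Jacobian may be computed avoiding any null set. Given the inclusion, every $A$ in the left side has the form $B+E$ with $\sigma_n(B)\ge2\delta_i$ and $|E|_{\op}<\epsilon_i$. Weyl's perturbation inequality for singular values gives $\sigma_n(A)\ge 2\delta_i-\epsilon_i\ge\delta_i>0$. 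Intersecting these neighbourhoods over $i$ yields a basic open set around $f$ inside $\mathrm{Sub}^{0,1}$.

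The main obstacle is that the definition of $\mathrm{Sub}^{k,1}$ quantifies over \emph{all} chart pairs, while the estimate above is obtained only in the chosen ones. This is resolved by observing that positivity of $\inf\sigma_n$ is chart-invariant: composing with smooth diffeomorphic transition maps multiplies Clarke Jacobians by invertible matrices, by the chain rule for Clarke derivatives with a $C^1$ outer or inner map. A secondary point is justifying the "null set avoidance" characterisation used in the sum rule; I would cite Clarke for it rather than reprove it.
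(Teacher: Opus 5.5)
Your proposal is correct and follows essentially the paper's argument. Like the paper, you reduce to $k=0$, apply the inclusion $Dh_i(x)\subset Df_i(x)+D(h_i-f_i)(x)$ on each $K_i$, and intersect over a locally finite family; your lower-semicontinuity-plus-Weyl bound is just a quantitative version of the paper's use of compactness of $\bigcup_{x\in K_i}Df_i(x)$ and openness of the surjective matrices, and your chart-invariance step via the Clarke chain rule for smooth transition maps fills in a point the paper only asserts in its final sentence.
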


\begin{proof}
    It suffices to consider the case $k=0$. We will also consider only $\mathrm{Sub}^{0,1}(M,N)$, as the proof for $\mathrm{Imm}^{0,1}(M,N)$ is similar. Fix $f\in \mathrm{Sub}^{0,1}(M,N)$. Choose an atlas $\{\varphi_i:U_i\rightarrow M\}_{i\in \NB}$ for $M$ admitting compact subsets $\{K_i\subset U_i\}_{i\in \NB}$ for which $\{\varphi_i(K_i)\}_{i\in\NB}$ is a cover of $M$, and any family of charts $\{\psi_i:V_i\rightarrow N\}_{i\in\NB}$ for $N$ such that $f(\varphi_i(K_i))\subset \psi_i(V_i)$ for all $i\in\NB$.
    
    For each $i$, the set $\mathcal{S}_i:=\bigcup_{x\in K_i}D(\psi_i^{-1}\circ f\circ\varphi_i)(x)$ is a compact family of surjective linear maps $\RB^m\rightarrow\RB^n$. Since the surjective linear maps are open in the space of all linear maps, there is $\epsilon_i>0$ such that any $A\in\RB^{n\times m}$ satisfying $\inf_{B\in\SC_i}|A-B|_{\mathrm{op}}<\epsilon_i$ is injective. Consider then the basic neighbourhood $\NC:=\bigcap_{i\in\NB}\NC(f;K_i,(U_i,\varphi_i),(V_i,\psi_i),\epsilon_i)$. Denoting $h_i:=\psi_i^{-1}\circ h\circ\varphi_i$ for any $h\in\NC$, if $g\in\NC$ then $\sup_{x\in K_i}|D(g_i-f_i)(x)|<\epsilon_i$ for all $i\in\NB$; since $Dg_i(x)\subset Df_i(x) + D(g_i-f_i)(x)$, it follows that $Dg_i(x)$ is surjective for all $x\in K_i$. Since $\{\varphi_i(K_i)\}_{i\in\NB}$ is a cover for $M$, it follows that $g$ has everywhere surjective derivative in any charts for $M$ and $N$.
\end{proof}

We next prove that $\mathrm{Emb}^{k,1}(M,N)$ is open in $C^{k,1}(M,N)$. This requires the following adaptation of \cite[Chapter 2, Lemma 1.3]{hirsch} to the $C^{0,1}$ setting (together with some minor corrections).

\begin{lemma}\label{lem:emb}
    Let $U\subset\HB^m$ be open, and let $W\subset U$ be an open subset with convex, compact closure $\overline{W}\subset U$. Let $f\in \mathrm{Emb}^{0,1}(U,\RB^n)$. Then there exists $\epsilon>0$ such that if $g\in C^{0,1}(U,\RB^n)$ satisfies
    \[
    \|f-g\|_{\overline{W}}^{0,1}<\epsilon
    \]
    then $g|_{\overline{W}}$ is an embedding.
\end{lemma}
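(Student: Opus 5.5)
We follow the structure of \cite[Chapter 2, Lemma 1.3]{hirsch}, replacing the classical derivative by the Clarke derivative and the mean value inequality by the Lebourg mean value theorem \cite[Proposition 2.6.5]{clarke}. Two properties of $g|_{\overline{W}}$ must be established: a \emph{local} uniform injectivity estimate coming from the derivative, and a \emph{global} separation estimate coming from the $C^0$-closeness to the embedding $f$. Since $\overline{W}$ is compact, injectivity of $g|_{\overline{W}}$ already implies that it is a homeomorphism onto its image. Moreover, immersivity is open by the preceding proposition, so it suffices to prove injectivity together with the immersion property on $\overline{W}$.

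\emph{Local step.} By the definition of $\mathrm{Imm}^{0,1}$, the set $\SC:=\bigcup_{x\in\overline{W}}Df(x)$ consists of injective matrices. By outer-semicontinuity (Proposition \ref{prop:clarke}) and compactness of $\overline{W}$, $\SC$ is compact. Hence there is $c>0$ with $|Av|\geq 2c|v|$ for all $A\in\SC$ and $v\in\RB^m$. Let $\delta>0$ be the Lebesgue-type radius given by outer-semicontinuity, so that $Df(y)\subset Df(x)+B(0,c/2)$ whenever $|x-y|<\delta$ with $x,y\in\overline{W}$. Take $x\neq y\in\overline{W}$ with $|x-y|<\delta$. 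Convexity of $\overline{W}$ puts the segment in $\overline{W}$, and the Lebourg theorem, applied to each coordinate functional, or more cleanly in its vector form with convex hulls, gives
\[
g(x)-g(y)\in\CC\{A(x-y):A\in Dg(z),\ z\in[x,y]\}.
\]
As in the previous proposition, $Dg(z)\subset Df(z)+D(g-f)(z)$, and $|D(g-f)(z)|_{\op}<\epsilon\leq c/2$. Each such $A$ is therefore within $c$ of some $B\in Df(x)$, the whole family lying within $c$ of the single compact convex set $Df(x)$. A convex combination of vectors $A(x-y)$ with $A$ in a convex set of matrices, all satisfying $|Av|\geq 2c|v|$ up to an error $c$, has norm at least $c|x-y|$. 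This gives $|g(x)-g(y)|\geq c|x-y|>0$.

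\emph{Global step.} The set $\{(x,y)\in\overline{W}^2:|x-y|\geq\delta\}$ is compact and $f$ is injective on it, so $\mu:=\min|f(x)-f(y)|>0$ there. Choosing $\epsilon<\mu/2$, the $C^0$ part of $\|f-g\|^{0,1}_{\overline{W}}$ gives $|g(x)-g(y)|\geq\mu-2\epsilon>0$ for such pairs. Combining the two steps, $g|_{\overline{W}}$ is injective, hence a topological embedding. The same derivative bound shows $\inf_{A\in Dg(x)}\sigma_m(A)\geq c>0$, so it is also an immersion.

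The main obstacle is the local step. The Clarke derivative at a single point gives no control over chords, so we must use the Lebourg theorem and handle the convex hull carefully. The vector-valued mean value statement only gives membership in a convex hull over the whole segment, and a convex hull of injective matrices need not be injective. This is exactly why the perturbation must be controlled uniformly over a $\delta$-neighbourhood of the single set $Df(x)$, which is itself convex and injective. We also have to check that the Clarke derivative of $g-f$ on $\overline{W}$ is controlled by the Lipschitz seminorm $\|\cdot\|^{0,1}_{\overline{W}}$. On the interior this follows from Proposition \ref{prop:clarkelipschitz}; at boundary points of $\overline{W}$ we would instead shrink to a slightly smaller convex set, or invoke Corollary \ref{cor:clarkeneighbourhood}.
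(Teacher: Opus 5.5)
Your two-scale architecture (a uniform local lower Lipschitz bound from the Clarke derivative, plus a global separation bound from $C^0$-closeness and compactness) is sound, but the local step rests on a false claim. Outer semicontinuity is \emph{not} uniform. In general there is no $\delta>0$ with $Df(y)\subset Df(x)+B(0,c/2)$ for all $x,y\in\overline{W}$ with $|x-y|<\delta$, even when $f$ is a Lipschitz embedding.

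For a counterexample, take $m=1$, $n=2$, $f(t)=(t,|t-1|)$ on $U=(0,2)$ and $\overline{W}=[\tfrac12,\tfrac32]$. Then $\SC=\{(1,s)^T:|s|\leq 1\}$, which forces $c\leq\tfrac12$. For $x=1+\delta/3$ and $y=1-\delta/3$ one has $Df(x)=\{(1,1)^T\}$ and $Df(y)=\{(1,-1)^T\}$, at distance $2$. Your inference genuinely breaks here (take $g=f$). The elements of $Df(z)$ for $z\in[y,1)$ are not within $c$ of $Df(x)$. Moreover, the averaged matrix in $f(x)-f(y)=(1,0)^T(x-y)$ is at distance $1>c$ from $Df(x)$. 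The lower bound survives only because $(1,0)^T\in Df(1)$.

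The repair is to compare with the Clarke derivative at the \emph{centre} of a ball rather than at an endpoint:
\begin{itemize}
\item For each $p\in\overline{W}$, choose $\delta_p$ with $Df(z)\subset Df(p)+B(0,c/2)$ whenever $|z-p|<\delta_p$.
\item Extract a finite subcover $\{B(p_j,\delta_{p_j}/2)\}_j$ of $\overline{W}$, and set $\delta:=\min_j\delta_{p_j}/2$.
\item Every segment $[x,y]$ with $x,y\in\overline{W}$ and $|x-y|<\delta$ then lies in a single $B(p_j,\delta_{p_j})$.
\end{itemize}
Your convex-hull computation then goes through verbatim, with the single compact convex set $Df(p_j)\subset\SC$ in place of $Df(x)$.

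With that fix, your route is a legitimate alternative to the paper's, which argues by contradiction. The paper takes a sequence $g_i\to f$ with colliding pairs $a_i\neq b_i$, $g_i(a_i)=g_i(b_i)$; injectivity of $f$ forces these to converge to a common point $a$, and a mean value step follows. Your direct version gives an explicit $\epsilon$ in terms of $c$ and $\mu$.

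Your caution about the vector-valued Lebourg theorem is well placed. The paper asserts a single $\xi_i$ and a single $A_i\in Dg_i(\xi_i)$ with $A_i(a_i-b_i)=0$. This is false for $n\geq 2$, already for smooth maps such as $t\mapsto(\cos t,\sin t)$ on $[0,2\pi]$; Clarke's Proposition 2.6.5 only gives membership in a convex hull. The corrected contradiction argument is exactly the sequential form of the fix above, with the limit point $a$ playing the role of $p_j$. The convex combination lies within $o(1)$ of the single convex set $Df(a)$, which produces an element of $Df(a)$ annihilating a unit vector.

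For injectivity, you can sidestep both the sum rule and your worry about $\partial\overline{W}$. The $\alpha=1$ seminorm directly gives $|(g-f)(x)-(g-f)(y)|<\epsilon|x-y|$ on the convex set $\overline{W}$. Hence $|g(x)-g(y)|\geq|f(x)-f(y)|-\epsilon|x-y|$, and the mean value theorem is needed only for $f$.

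Clarke immersivity of $g$ at points of $\partial\overline{W}$ is indeed not controlled by $\|f-g\|^{0,1}_{\overline{W}}$. Your proposed remedies change the hypothesis rather than verify it. However, the paper's proof has the same gap: it silently assumes $\sup_{x\in\overline{W}}|D(g-f)(x)|<\epsilon_0$.
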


\begin{proof}
    Since $f$ is a locally Lipschitz embedding, the set $\SC:=\bigcup_{x\in\overline{W}}Df(x)$ is a compact family of injective linear maps in $\RB^{n\times m}$. Since the injective linear maps in $\RB^{n\times m}$ are open and $\SC$ is compact, there is $\epsilon_0>0$ such that any $A\in\RB^{n\times m}$ satisfying $\inf_{B\in \SC}|A-B|_{\mathrm{op}}\leq\epsilon_0$ is itself injective. Suppose now that $g\in C^{0,1}(U,\RB^n)$ satisfies $\sup_{x\in\overline{W}}|D(g-f)(x)|<\epsilon_0$. Since $Dg(x)\subset Df(x)+D(g-f)(x)$ for all $x\in U$, $Dg(x)$ consists entirely of invertible matrices for each $x\in\overline{W}$, so that $g|_{\overline{W}}$ is an immersion. It follows from Theorem \ref{thm:clarkeift} that $g|_{\overline{W}}$ is locally an embedding.

    Since $\overline{W}$ is compact, any $g\in C^{0,1}(U,\RB^n)$ that is both immersive and injective on $\overline{W}$ is an embedding. Since we have already proved that any $g$ with Clarke derivative sufficiently close to $f$ uniformly over $\overline{W}$ is immersive, the lemma is false only if one can find immersive, non-injective maps arbitrarily close to $f$. That is, the lemma is false only if there exists a sequence $\{g_i\}_{i\in\NB}\in C^{0,1}(U,\RB^n)$ such that $|g_i(x)-f(x)|\rightarrow 0$ and $|D(g_i-f)(x)|\rightarrow 0$ uniformly over $\overline{W}$ and sequences $\{a_i\}_{i\in\NB}$, $\{b_i\}_{i\in\NB}$ such that $a_i\neq b_i$ and $g_i(a_i) = g_i(b_i)$ for all $i\in\NB$. By compactness of $\overline{W}$, we may assume that $a_i\rightarrow a$ and $b_i\rightarrow b$ for some $a,b\in\overline{W}$; since $g_i\rightarrow f$ uniformly over $\overline{W}$ it follows that $f(a)=f(b)$, hence that $a=b$ by injectivity of $f$. By convexity of $\overline{W}$ and \cite[Proposition 2.6.5]{clarke}, we are then assured that for all $i\in\NB$ there is $\xi_i\in \overline{W}$ between $a_i$ and $b_i$ and $A_i\in Dg_i(\xi_i)$ such that $0 = g_i(a_i)-g_i(b_i) = A_i(a_i-b_i)$ for all $i\in\NB$. Setting $v_i:=(a_i-b_i)/|a_i-b_i|$, a unit vector, one thus has $A_iv_i = 0$ for all $i\in\NB$. Since $Dg_i(\xi_i)\subset Df(\xi_i) + D(g_i-f)(\xi_i)$ for all $i\in\NB$, there is $B_i\in Df(\xi_i)$ such that $A_i \in B_i+D(g_i-f)(\xi_i)$; since furthermore $|D(f-g_i)(\xi_i)|_{\op}\rightarrow 0$, one deduces that $B_iv_i\rightarrow 0$. However, this contradicts the fact that since $x\mapsto \inf_{A\in Df(x)}\sigma_m(A)$ is lower-semicontinuous by Corollary \ref{cor:sc}, it attains its minimum at some point in the compact set $\overline{W}$ and, since $f$ is an embedding, this minimum is strictly positive.
\end{proof}

We may now prove openness of the embeddings. The proof idea is essentially that of \cite[Chapter 2, Theorem 1.4]{hirsch}, however the latter requires some minor corrections.

\begin{proposition}
    Let $M$ and $N$ be $C^{\infty}$ manifolds, and let $k\in\NB\cup\{0\}$. Then the subset $\mathrm{Emb}^{k,1}(M,N)$ is open in $C^{k,1}(M,N)$.
\end{proposition}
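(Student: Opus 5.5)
As with the openness of submersions and immersions, I would first reduce to $k=0$: $\mathrm{Emb}^{k,1}(M,N)=\mathrm{Emb}^{0,1}(M,N)\cap C^{k,1}(M,N)$, and every basic set of $C^{0,1}(M,N)$ meets $C^{k,1}(M,N)$ in an open set. The argument then follows \cite[Chapter 2, Theorem 1.4]{hirsch}. The local input is Lemma \ref{lem:emb}, and the work is to globalise it with a locally finite family of compacta while also preventing distant parts of $M$ from being mapped on top of each other.

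Fix $f\in\mathrm{Emb}^{0,1}(M,N)$. I would choose a locally finite atlas $\{\varphi_i:U_i\rightarrow M\}_{i\in\NB}$ with open sets $W_i\subset U_i$ whose closures $\overline{W_i}\subset U_i$ are compact and convex, such that $\{\varphi_i(W_i)\}$ covers $M$ and $\{\varphi_i(\overline{W_i})\}$ is locally finite. I would also choose charts $\psi_i:V_i\rightarrow N$ with $f\varphi_i(\overline{U_i'})\subset\psi_i(V_i)$ for slightly smaller neighbourhoods $U_i'\supset\overline{W_i}$. Since $f$ is a topological embedding, $f(\varphi_i(\overline{W_i}))$ is compact. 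Lemma \ref{lem:emb}, applied to $\psi_i^{-1}\circ f\circ\varphi_i$, gives $\epsilon_i>0$ such that any $g$ with $\|\psi_i^{-1} g\varphi_i-\psi_i^{-1} f\varphi_i\|^{0,1}_{\overline{W_i}}<\epsilon_i$ is an embedding on $\varphi_i(\overline{W_i})$. By Corollary \ref{cor:clarkeneighbourhood}, these neighbourhoods may equivalently be phrased with the Clarke seminorms.

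Next I would handle global injectivity. Because $f$ is an embedding, $f(\varphi_i(\overline{W_i}))$ and $f(M\setminus\varphi_i(U_i'))$ are disjoint, and $f(\varphi_i(\overline W_i))$ is compact. Since $f$ is a homeomorphism onto its image, $f(\varphi_i(\overline W_i))$ has a neighbourhood $O_i$ in $N$ with $f^{-1}(\overline{O_i})\subset\varphi_i(U_i')$. I would shrink the $\epsilon_j$ so that $C^0$-closeness on each $\overline{W_j}$ has two consequences: it forces $g(\varphi_i(\overline{W_i}))\subset O_i$, and for every $j$ with $\varphi_j(\overline W_j)\cap f^{-1}(\overline{O_i})=\emptyset$ it forces $g(\varphi_j(\overline W_j))\cap O_i=\emptyset$. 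This is achievable with one $\epsilon_j$ per $j$ only because local finiteness means each $j$ meets the constraints of finitely many $i$. With these choices, injectivity of $g$ follows. If $g(p)=g(q)$ with $p\in\varphi_i(W_i)$, then $q$ lies in some $\varphi_j(\overline{W_j})$ meeting $f^{-1}(\overline{O_i})\subset\varphi_i(U_i')$. One then needs both $p$ and $q$ in a single compact convex chart piece on which $g$ is known to be injective. To guarantee this, I would apply Lemma \ref{lem:emb} to the enlarged compact convex sets $\overline{U_i'}$ rather than to $\overline{W_i}$, and index the shrinking accordingly.

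Finally, a continuous injective immersion need not be an embedding, so I would intersect with a neighbourhood of $f$ consisting of proper maps onto their image in the relevant sense. Concretely, the shrinking above makes $g^{-1}(O_i)\subset\varphi_i(U_i')$, so $g$ restricted to each $g^{-1}(O_i)$ is a homeomorphism onto its image, and the sets $O_i$ cover $g(M)$. A map that is locally a homeomorphism onto its image over an open cover of that image, and is globally injective, is a homeomorphism onto its image. I expect the main obstacle to be the bookkeeping that makes all these shrinkings consistent with a single locally finite family $\{\epsilon_i\}$, particularly arranging that $p$ and $q$ always lie in a common convex piece. This is precisely where Hirsch's argument needs the \enquote{minor corrections}, and I would address it with the two-level nested covers $W_i\subset U_i'\subset U_i$ described above.
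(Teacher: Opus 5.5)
Your overall architecture matches the paper's: reduce to $k=0$, apply Lemma \ref{lem:emb} on compact convex chart pieces, then separate the $f$-image of each piece from the $f$-image of the complement of a slightly larger piece, to get injectivity and continuity of $g^{-1}$. The globalisation step, however, has two genuine gaps.

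\textbf{First gap: your separation condition does not localise preimages.} You only require $g(\varphi_j(\overline{W_j}))\cap O_i=\emptyset$ for \emph{whole} pieces with $\varphi_j(\overline{W_j})\cap f^{-1}(\overline{O_i})=\emptyset$. This shows only that $g^{-1}(O_i)$ lies in the union of those pieces $\varphi_j(\overline{W_j})$ that \emph{meet} $f^{-1}(\overline{O_i})\subset\varphi_i(U_i')$. Such a piece can extend well outside $\varphi_i(\overline{U_i'})$. Hence neither ``$p$ and $q$ lie in a common piece'' nor ``$g^{-1}(O_i)\subset\varphi_i(U_i')$'' follows, and running Lemma \ref{lem:emb} on $\overline{U_i'}$ does not change this. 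The repair is to impose the condition pointwise on the complement, which is what the paper's $g(M\setminus\varphi_i(W_i))\subset B_i$ does. For every $j$, require $g\big(\varphi_j(\overline{W_j})\setminus\varphi_i(U_i')\big)\cap O_i=\emptyset$. This is a condition on a compact set whose $f$-image misses $\overline{O_i}$, and it gives $g^{-1}(O_i)\subset\varphi_i(U_i')$ directly. After that, your injectivity and inverse-continuity arguments go through with Lemma \ref{lem:emb} applied on $\overline{U_i'}$.

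\textbf{Second gap: the counting claim is false for the separation constraints.} You assert that ``each $j$ meets the constraints of finitely many $i$''. But for fixed $j$, every $i$ whose $\varphi_i(U_i')$ misses $\varphi_j(\overline{W_j})$ imposes a condition on $g|_{\varphi_j(\overline{W_j})}$, and that is all but finitely many $i$. So you need $\inf_i\delta_{ij}>0$. Local finiteness only handles the conditions $g(\varphi_i(\overline{W_i}))\subset O_i$. For an arbitrary choice of the $O_i$ this infimum can vanish: in positive codimension, $O_i$ may send a thin finger to within distance $1/i$ of a fixed piece.

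You must therefore choose the $O_i$ to shrink, e.g.\ inside the $1/i$-neighbourhood of $f(\varphi_i(\overline{W_i}))$ for a fixed metric on $N$. Suppose infinitely many distinct $O_{i_n}$ came within $1/n$ of the compact set $f(\varphi_j(\overline{W_j}))$. You would obtain $x_n\in\varphi_{i_n}(\overline{W_{i_n}})$ with $f(x_n)\to f(y)$ for some $y\in\varphi_j(\overline{W_j})$. Since $f$ is a homeomorphism onto its image, $x_n\to y$, contradicting local finiteness of $\{\varphi_i(\overline{W_i})\}$. So all but finitely many $O_i$ stay a fixed positive distance from $f(\varphi_j(\overline{W_j}))$, and a positive $\delta_j$ exists.

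The paper's own write-up takes its infimum only over $i$ with $\varphi_i(K_i)\cap\varphi_j(K_j)\neq\emptyset$, so it passes over this same point. Your argument still needs it supplied explicitly.
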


\begin{proof}
    It suffices to consider $k=0$. Fix $f\in\mathrm{Emb}^{0,1}(M,N)$. Let $\AC_M:\{\varphi_i:U_i\rightarrow M\}_{i\in\NB}$ be a locally finite atlas for $M$ admitting compact, convex subsets $K_i\subset U_i$ whose interiors $W_i$ have $\varphi_i(W_i)$ being an open cover for $M$, and for which there exists a family of charts $\{\psi_i:V_i\rightarrow N\}_{i\in\NB}$ for $N$ such that $f\circ \varphi_i(U_i)\subset\psi_i(V_i)$ for all $i\in\NB$. By Lemma \ref{lem:emb}, there is a  family $\{\epsilon_i\}_{i\in\NB}$ of positive real numbers such that $g\in\NC:=\bigcap_{i\in\NB}\NC(f,K_i,(U_i,\varphi_i),(V_i,\psi_i),\epsilon_i)$ implies that $g\circ\varphi_i(K_i)\subset \psi(V_i)$ and $g|_{\varphi_i(K_i)}$ is an embedding; in particular, any such $g$ is an immersion. It suffices now to prove that any $g$ sufficiently close to $f$ is moreover injective and has continuous inverse.

    Both injectivity and continuity of the inverse require the following setup. For each $i$, let $T_i$ be an open subset of $W_i$ such that $\overline{T_i}\subset W_i$ and such that the family $\{\varphi_i(T_i)\}_{i\in\NB}$ covers $M$. Since $f$ is a homeomorphism onto its image and $N$ is normal, there are disjoint open neighbourhoods $A_i$ and $B_i$ of the closed subsets $f(\varphi(\overline{T_i}))$ and $f(M\setminus\varphi_i(W_i))$ of $N$ respectively. We now show that there is a neighbourhood $\mathcal{N}'$ of $f$ for which $g\in\mathcal{N}'$ implies $g(\overline{T_i})\subset A_i$ and $g(M\setminus W_i)\subset B_i$ for all $i\in\NB$. To see this, fix $i$ and observe that the $\{\varphi_j(K_j)\}_{j\in\NB}$, being a cover of $M$, also cover each of the closed sets $\varphi_i(\overline{T_i})$ and $M\setminus \varphi_i(W_i)$. There are therefore $\{\delta_{ij}>0\}_{j\in\NB}$ such that $\sup_{x\in K_j}|\psi_j^{-1}\circ f\circ \varphi_j(x)-\psi_j^{-1}\circ g\circ\varphi_j(x)|<\delta_{ij}$ for all $j$ implies that $g(\varphi_i(\overline{T_i}))\subset A_i$ and $g(M\setminus \varphi_i(W_i))\subset B_i$. We are assured that $g(\overline{T_i})\subset A_i$ and $g(M\setminus \varphi_i(W_i))\subset B_i$ for all $i$, therefore, provided that $\sup_{x\in K_j}|\psi_j^{-1}\circ f\circ \varphi_j(x)-\psi_j^{-1}\circ g\circ\varphi_j(x)|<\inf_{i:\varphi_i(K_i)\cap \varphi_j(K_j)\neq\emptyset}\delta_{ij}$ for all $j$. Since the $\{\varphi_j(K_j)\}_{j\in\NB}$ are locally finite, each $\varphi_i(K_i)$ intersects at most finitely many $\varphi_j(K_j)$, implying that $\delta_j:=\inf_{i:\varphi_i(K_i)\cap \varphi_j(K_j)\neq\emptyset}\delta_{ij}>0$ for each $j$. Then $\NC':=\bigcap_{j\in\NB}\NC(f;K_j,(U_j,\varphi_j),(V_j,\psi_j),\delta_j)$ has the desired property.
    
    Now fix $g\in\NC\cap\NC'$; we show that $g$ is an embedding. Since $g\in \NC$, $g$ is an immersion. That $g$ is injective follows from the additional containment $g\in\NC\cap \NC'$. Indeed, suppose that $x,y\in M$ are distinct points, and assume without loss of generality that $x\in \varphi_i(T_i)$. If $y\in \varphi_i(W_i)$, then $g(x)\neq g(y)$ since $g|_{\varphi(\overline{W}_i)}$ is an embedding, hence injective; the only alternative is that $y\in M\setminus \varphi_i(W_i)$, in which case $g(x)\in A_i$ while $g(y)\in B_i$, so that $g(x)\neq g(y)$. Thus $g$ is injective. To see that $g$ is a homeomorphism onto its image, it must be checked that $g^{-1}$ is continuous, for which it suffices to show that if $\{y_n\}_{n\in\NB}$ is a sequence for which $g(y_n)\rightarrow g(x)$, then $y_n\rightarrow x$. Without loss of generality, $x\in T_i$ for some $i$.  Then $g(x)\in A_i$ so that only finitely many $g(y_n)$ can be contained in $B_i$, implying further that all but a finite number of the $y_n$ are contained in $W_i$. Since $g|_{W_i}:W_i\rightarrow g(W_i)$ is a homeomorphism, $y_n\rightarrow x$.
\end{proof}

The following properness result is essentially a $C^{0,0}$-result, and follows from an identical argument to that given in \cite[Chapter 2, Theorem 1.5]{hirsch}.

\begin{proposition}\label{prop:proper}
    Let $M$ and $N$ be $C^{\infty}$ manifolds, and let $k\in\NB\cup\{0\}$. Then the subset $\mathrm{Prop}^{k,1}(M,N)$ is open in $C^{k,1}(M,N)$.
\end{proposition}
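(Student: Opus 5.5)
The plan is to adapt the argument of \cite[Chapter 2, Theorem 1.5]{hirsch}, which uses only $C^0$-closeness. As in the preceding propositions, it suffices to treat $k=0$. Indeed, $\mathrm{Prop}^{k,1}(M,N)=\mathrm{Prop}^{0,1}(M,N)\cap C^{k,1}(M,N)$, and any basic set of $C^{0,1}(M,N)$ meets $C^{k,1}(M,N)$ in an open set. Moreover, every elementary neighbourhood defined using $\|\cdot\|^{0,1}_K$ is contained in the corresponding one defined with the sup norm $\|\cdot\|^{0,0}_K$. So it is enough to exhibit, for a given proper $f$, a basic neighbourhood defined only through $C^0$ seminorms all of whose members are proper.

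Fix $f\in\mathrm{Prop}^{0,1}(M,N)$. First choose a locally finite atlas $\{\varphi_i:U_i\rightarrow M\}_{i\in\NB}$ with compact $K_i\subset U_i$ such that the $\varphi_i(K_i)$ cover $M$. Also choose charts $\psi_i:V_i\rightarrow N$ with $f\varphi_i(K_i)\subset\psi_i(V_i)$, together with compact sets $L_i\subset V_i$ whose interiors contain $\psi_i^{-1}f\varphi_i(K_i)$. Next choose $\epsilon_i>0$ so small that $\sup_{K_i}|\psi_i^{-1}g\varphi_i-\psi_i^{-1}f\varphi_i|<\epsilon_i$ forces $g\varphi_i(K_i)\subset\psi_i(L_i)$. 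Set $\NC:=\bigcap_i\NC^{0,1}(f;K_i,(U_i,\varphi_i),(V_i,\psi_i),\epsilon_i)$. We want $g^{-1}(C)$ to be compact for every $g\in\NC$ and every compact $C\subset N$.

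The key point is that the family $\{\psi_i(L_i)\}$ must be locally finite in $N$, and this is where properness of $f$ enters. To arrange it, fix an exhaustion of $N$ by compact sets $N_1\subset\mathrm{int}N_2\subset N_2\subset\cdots$. Then $f^{-1}(N_{j+1})$ is compact, so only finitely many $\varphi_i(K_i)$ meet it. For every other $i$, the set $f\varphi_i(K_i)$ lies in $N\setminus N_{j+1}$. Shrinking $L_i$ (and $\epsilon_i$), we may then assume that $\psi_i(L_i)\subset N\setminus N_j$ whenever $f\varphi_i(K_i)\cap N_{j+1}=\emptyset$. Now take a compact set $C\subset N$, so $C\subset N_j$ for some $j$. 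If $g\in\NC$ and $g(\varphi_i(K_i))\cap C\neq\emptyset$, then $\psi_i(L_i)\cap N_j\neq\emptyset$, hence $f\varphi_i(K_i)$ meets $N_{j+1}$. Only finitely many $i$ satisfy this. Therefore $g^{-1}(C)$ is a closed subset of a finite union of the compact sets $\varphi_i(K_i)$, and so it is compact.

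The main obstacle is exactly this shrinking step. We need the choices of $L_i$ and $\epsilon_i$ to be made uniformly in $i$, so that for each $j$ only finitely many indices can have their perturbed images reach $N_j$. Once the shrinking is done via the exhaustion and properness of $f$, as above, the rest of the argument is routine.
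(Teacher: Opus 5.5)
Your proposal is correct and takes the same route as the paper, which simply defers to the purely $C^0$ argument of \cite[Chapter 2, Theorem 1.5]{hirsch}. You confine the images of a locally finite family of compacta covering $M$ to a locally finite family of compacta in $N$, and conclude that $g^{-1}(C)$ is a closed subset of finitely many $\varphi_i(K_i)$. The shrinking step is fine and needs no uniformity in $i$: for each $i$ it reduces to the single requirement $\psi_i(L_i)\subset N\setminus N_{j_i}$, where $j_i$ is the largest $j$ (if any) with $f\varphi_i(K_i)\cap N_{j+1}=\emptyset$. The only implicit detail is that the atlas of $M$ must first be refined, using continuity of $f$, so that each $f\varphi_i(K_i)$ lies in a single chart of $N$, as the paper also does in its other openness proofs.
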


Finally, we deduce from Proposition \ref{prop:diffopen} that:

\begin{proposition}\label{prop:diffopen}
    Let $M$ and $N$ be $C^{\infty}$ manifolds, and let $k\in\NB\cup\{0\}$. Then the set $\mathrm{Diff}^{k,1}(M,N)$ is open in strong subspace topology of $C^{k,1}_{\partial}(M,N)$.
\end{proposition}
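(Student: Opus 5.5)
The plan is to deduce the statement directly from Proposition \ref{prop:intersection}, combined with the openness results just established. By Proposition \ref{prop:intersection},
\[
\mathrm{Diff}^{k,1}(M,N) = \mathrm{Sub}^{k,1}(M,N)\cap \mathrm{Emb}^{k,1}(M,N)\cap \mathrm{Prop}^{k,1}(M,N)\cap C^{k,1}_{\partial}(M,N).
\]
We have already shown that $\mathrm{Sub}^{k,1}(M,N)$ and $\mathrm{Emb}^{k,1}(M,N)$ are open in $C^{k,1}(M,N)$. By Proposition \ref{prop:proper}, the same holds for $\mathrm{Prop}^{k,1}(M,N)$.

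The intersection of finitely many open sets is open, so $\mathcal{O}:=\mathrm{Sub}^{k,1}\cap\mathrm{Emb}^{k,1}\cap\mathrm{Prop}^{k,1}$ is open in $C^{k,1}(M,N)$ with the strong topology. By the definition of the subspace topology, $\mathcal{O}\cap C^{k,1}_{\partial}(M,N)$ is then open in $C^{k,1}_{\partial}(M,N)$. This intersection is exactly $\mathrm{Diff}^{k,1}(M,N)$.

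The only point that needs care is Proposition \ref{prop:intersection} itself. In its converse direction, the boundary condition $f(\partial M)\subset\partial N$ is what allows Theorem \ref{thm:clarkeift} to apply at boundary points. This is why openness holds only relative to $C^{k,1}_{\partial}(M,N)$ and not in all of $C^{k,1}(M,N)$.

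I also want to check that the reduction to $k=0$ used for the three openness results is legitimate. This needs the strong $C^{k,1}$ topology to be finer than the restriction of the strong $C^{0,1}$ topology. That follows because $\|\cdot\|^{0,1}_K\le\|\cdot\|^{k,1}_K$ on compact convex $K$, using Proposition \ref{prop:clarkelipschitz} together with Corollary \ref{cor:clarkeneighbourhood} to bound the Lipschitz seminorm of a $C^k$ map by the sup of its first derivative.

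With these checks in place, no genuine obstacle remains. The proof is a formal combination of the earlier results.
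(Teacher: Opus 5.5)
Your proposal is correct and is the paper's own argument. Proposition \ref{prop:intersection} writes $\mathrm{Diff}^{k,1}(M,N)$ as the intersection of the open sets $\mathrm{Sub}^{k,1}(M,N)$, $\mathrm{Emb}^{k,1}(M,N)$ and $\mathrm{Prop}^{k,1}(M,N)$ with $C^{k,1}_{\partial}(M,N)$, so it is open in the subspace topology. The sentence introducing the proposition in the paper cites the proposition itself, apparently a typo; the intended reference is Proposition \ref{prop:intersection}, which is the one you use.
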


We conclude with the following proposition, which is necessary for proving the well-definedness of the fixed point problem in Section \ref{sec:existence}.

\begin{proposition}\label{prop:sectiondiffeo}
    Let $\pi:E\rightarrow M$ be a $C^{\infty}$ vector bundle, and let $T:E\rightarrow E$ be a $C^{\infty}$ vector bundle morphism covering a $C^{\infty}$ diffeomorphism $\tau:M\rightarrow M$. Assume $M$ to be equipped with a Riemannian metric and $E$ to be equipped with a fibrewise Euclidean metric, so that $E$ is a Riemannian manifold also. Then for any $C>0$ and any strong $C^{0,1}$-neighbourhood $\UC$ of $\tau$ in $C^{0,1}(M,M)$, there exists a strong $C^{1}$ neighbourhood $\VC$ of $T$ in $C^{\infty}(E,E)$ such that $\pi\circ g\circ \sigma\in \UC$ for all $g\in \VC$ and sections $\sigma\in C^{0,1}(M,E)$ of $\pi$ satisfying $\sup_{x\in M}\|D\sigma(x)\|\leq C$.
\end{proposition}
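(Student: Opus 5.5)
Since $T$ is a bundle morphism covering $\tau$, one has $\pi\circ T\circ\sigma=\tau\circ\pi\circ\sigma=\tau$ for every section $\sigma$ of $\pi$. The plan is to compare $\pi\circ g\circ\sigma$ directly with $\tau$, chart by chart. By Lemma \ref{lem:charts} and Corollary \ref{cor:clarkeneighbourhood}, we may assume that $\UC$ contains a basic neighbourhood
\[
\bigcap_{i}\NC^{0,\mathrm{Clarke}}\big(\tau;K_i,(U_i,\varphi_i),(V_i,\psi_i),\epsilon_i\big)
\]
with the following properties: the $K_i\subset U_i$ are compact and locally finite, the $\varphi_i$ come with local trivialisations $E|_{\varphi_i(U_i)}\cong U_i\times\RB^q$ defined by orthonormal frames, and $\tau\circ\varphi_i(U_i)\subset\psi_i(V_i)$, where $\psi_i$ is likewise trivialised. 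It then suffices to bound, uniformly over $x\in K_i$, both the $C^0$ distance and the Clarke derivative of
\[
\psi_i^{-1}\circ\pi\circ g\circ\sigma\circ\varphi_i-\psi_i^{-1}\circ\tau\circ\varphi_i .
\]

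The key difficulty is that a section $\sigma$ has no a priori bound on its size, only on its derivative. So $\sigma(K_i)$ need not lie in any fixed compact subset of $E$, and a strong $C^1$ neighbourhood of $T$ must control $g$ on all of $E|_{\varphi_i(K_i)}$, which is non-compact in the fibre direction. The plan is to exploit the strong topology: choose a locally finite exhaustion of $E|_{\varphi_i(K_i)}$ by compact sets $L_{i,n}:=\varphi_i(K_i)\times\overline{\RB^q(n+1)}\setminus\RB^q(n)$ and impose $C^1$ closeness $\delta_{i,n}$ of $g$ to $T$ on each $L_{i,n}$. The family $\{L_{i,n}\}$ is locally finite in $E$, so this defines a genuine strong $C^1$ neighbourhood $\VC$. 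The tolerances $\delta_{i,n}$ may depend on $n$; since the bound below turns out to be independent of $n$, we can in fact take $\delta_{i,n}=\delta_i$ for all $n$.

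Now write $\Pi(x,v):=x$ for the base projection in coordinates, and $G:=\psi_i^{-1}\circ g\circ\varphi_i$, $\widetilde T:=\psi_i^{-1}\circ T\circ\varphi_i$ in the trivialisations, with $\sigma$ written as $x\mapsto(x,s(x))$. Then $\Pi\circ\widetilde T(x,s(x))=\psi_i^{-1}\tau\varphi_i(x)$, and
\[
\big|\Pi G(x,s(x))-\Pi\widetilde T(x,s(x))\big|\leq \sup_{L_{i,n}}|G-\widetilde T|<\delta_i .
\]
For the derivatives, the chain rule for Clarke derivatives of a $C^1$ map composed with a Lipschitz map (an inclusion, as used in Theorem \ref{thm:invariant}) gives
\[
D(\Pi\circ G\circ\sigma)(x)\subset \Pi\,DG(x,s(x))\,\big(I,\,Ds(x)\big),
\]
and similarly for $\widetilde T$. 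Subtracting, every element of $D(\Pi\circ G\circ\sigma-\Pi\circ\widetilde T\circ\sigma)(x)$ has norm at most
\[
|DG-D\widetilde T|_{\op}\,(1+C')\leq\delta_i(1+C'),
\]
where $C'$ bounds $|Ds|$ in terms of $C$ and the frames, uniformly over the compact $K_i$. Note that for the derivative term we cannot simply use $D(\Pi\widetilde T\sigma)=D(\psi_i^{-1}\tau\varphi_i)$ and compare; the difference is controlled directly, since $\sigma$ enters both maps identically.

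It remains to choose $\delta_i(1+C')<\epsilon_i$ and to ensure that $g\circ\sigma\circ\varphi_i(K_i)$ lands in the domain of $\psi_i$. The latter holds for small $\delta_i$ because $\tau\varphi_i(K_i)$ is compact inside $\psi_i(V_i)$ and the base component moves by less than $\delta_i$. The main obstacle is precisely the fibrewise non-compactness handled above. One must check that uniform closeness on unbounded fibres is legitimately encoded by the strong topology; it is, via local finiteness of $\{L_{i,n}\}$. One must also check that the Clarke chain rule inclusion applies to the difference of two compositions. To handle this I would write the difference as $H\circ\sigma$ with $H:=\Pi(G-\widetilde T)$, which is $C^1$, so the chain rule applies to a single composition.
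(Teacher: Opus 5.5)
Your proposal is correct and follows the paper's proof essentially step for step. You reduce to Clarke-seminorm basic neighbourhoods via Lemma \ref{lem:charts} and Corollary \ref{cor:clarkeneighbourhood}, cover the fibre-noncompact sets $E|_{\varphi_i(K_i)}$ by a locally finite family of compact products (the paper's $K_i\times K'_j$, your annuli $L_{i,n}$), and impose $C^1$-closeness to $T$ on each, with a tolerance depending only on $i$ through a uniform bound on the coordinate derivative of admissible sections; your use of the single $C^1$ map $H=\Pi(G-\widetilde T)$ for the Clarke chain rule is slightly cleaner than the paper's.

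One caveat, shared with the paper's $\delta_i$: your uniform constant $C'$ tacitly requires the metric on $E$ to be uniformly comparable on $E|_{\varphi_i(K_i)}$ to the product metric of the trivialisation. This holds for the trivial bundle with constant fibre metric used in Section \ref{sec:existence}, but can fail for a Sasaki metric written in non-parallel frames, where the coordinate derivative of $s$ picks up the connection-form term and so grows with $|s|$. It is therefore safer to keep the $n$-dependent tolerances $\delta_{i,n}$ that your construction already allows.
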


\begin{proof}
    Fix locally finite atlases $\{\varphi_i:U_i\rightarrow M\}_{i\in\NB}$ and $\{\psi_i:V_i\rightarrow M\}_{i\in\NB}$ for $M$ over which there exist local trivialisations $\{t_i:U_i\times\RB^k\rightarrow E|_{\varphi_i(U_i)}\}$ and $\{s_i:V_i\times\RB^k\rightarrow E|_{\psi_i(V_i)}\}$ respectively for $E$. By Lemma \ref{lem:charts} and Corollary \ref{cor:clarkeneighbourhood}, it suffices to suppose that $\UC:=\bigcap_{i\in\NB}\NC^{0,\text{Clarke}}(\tau; K_i,(U_i,\varphi_i),(V_i,\psi_i),\epsilon_i)$, for some family $\{K_i\subset U_i\}_{i\in\NB}$ of compact sets such that $\tau\circ\varphi_i(K_i)\subset\psi_i(V_i)$, and positive numbers $\{\epsilon_i>0\}_{i\in\NB}$.

    Let $\{K'_j\}_{j\in\NB}$ be a fixed, locally finite covering of $\RB^k$ by compact sets. For each $(i,j)\in\NB^2$, denote $K_{ij}:=K_i\times K'_j\subset U_i\times\RB^k$. Then since $\bigcup_{j\in\NB}t_i(K_{ij}) = \pi^{-1}\varphi_i(K_i)$ and the family $\{K_i\}_{i\in\NB}$ is locally finite, the family $\{K_{ij}\}_{(i,j)\in\NB^2}$ is locally finite.  Given any section $\sigma:M\rightarrow E$ of $\pi$ and any $i\in\NB$, denote $\sigma_i:=t_i^{-1}\circ\sigma\circ \varphi_i$. Then there is a family $\{\delta_i\}_{i\in\Lambda}$ such that
    \[
    |D\sigma_i(x_{i})|_{\mathrm{op}}\leq \delta_{i},\qquad \forall x_{i}\in K_i
    \]
    for all sections $\sigma$ of $\pi$ satisfying the uniform bound $\sup_{x\in M}|D\sigma(x)|_{\mathrm{op}}\leq C$. Similarly, denote $\pi_i:=\psi_i^{-1}\circ\pi\circ s_i$, and observe that $(x,v)\mapsto D\pi_i(x,v)\equiv D\pi_i$ is \emph{constant} and satisfies
    \[
    |D\pi_i|_{\mathrm{op}} = 1,\quad \mathrm{Lip}(\pi_i) = 1,\qquad \forall i\in\NB.
    \]
    Finally, for all $(i,j)\in\NB^2$, denote
    \[
    \epsilon_{ij}:=\max\{\epsilon_i,\epsilon_i/\delta_{i}\}.
    \]
    Now, suppose that $g:E\rightarrow E$ satisfies
    \[
    \|g_i-T_i\|_{K_{ij}}^{1,0} = \sup_{(x,v)\in K_{ij}}\max\big\{|(g_i-T_i)(x,v)|,|D(g_i-T_i)(x,v)|_{\mathrm{op}}\big\}\leq \epsilon_{ij}
    \]
    for all $(i,j)\in\NB^2$. Then for any $i\in\NB$, any $x\in K_i$ and any 1-Lipschitz section $\sigma$ of $\pi$, one may choose $j\in\NB$ such that $\sigma_i(x)\in K_{ij}$, and estimate
    \begin{align*}
        |(\pi\circ g\circ\sigma)_i(x)-(\pi\circ T\circ\sigma)_i(x)| &=  |(\pi_i\circ g_i\circ\sigma_i)(x) - (\pi_i\circ T_i\circ\sigma_i)(x_i)|\\&\leq \mathrm{Lip}(\pi_i)|(g_i\circ\sigma_i)(x)-(T_i\circ\sigma_i)(x)|\\&\leq\|g_i-T_i\|^{1,0}_{K_{ij}}\\&\leq\epsilon_{ij}\leq\epsilon_i
    \end{align*}
    Similarly,
    \begin{align*}
        \big|D\big((&\pi\circ g\circ\sigma)_i-(\pi\circ T\circ\sigma)_i\big)(x)\big|_{\mathrm{op}}\\=&|D\pi_iDg_i(\sigma_i(x))D\sigma_i(x)-D\pi_iDT_i(\sigma_i(x))D\sigma_i(x)|_{\mathrm{op}}\\\leq &|D\sigma_i(x)|_{\mathrm{op}}|D\pi_i|_{\mathrm{op}}|Dg_i(\sigma_i(x)) - DT_i(\sigma_i(x))|_{\mathrm{op}}\\\leq&\delta_{i}\|D(g_i-T_i)\|_{K_{ij}}^{1,0}\leq \epsilon_{i}
    \end{align*}
    It follows that $\bigcap_{(i,j)\in\NB^2}\NC^{1,0}(T,(U_i\times\RB^k,t_i),(V_i\times\RB^k,s_i),K_{ij},\epsilon_{ij})$ is a neighbourhood with the desired property.
\end{proof}

\subsection{Lipschitz regularity for Riemannian manifolds}

In addition to the chart-based notions of Lipschitz regularity for functions and their derivatives in the previous subsections, we will also need corresponding intrinsic notions for Riemannian manifolds. 

Given a Riemannian manifold $M$ with convexity radius $c_M>0$, a $C^k$ map $f:M\rightarrow\RB^q$ and $1\leq l\leq k$, we consider the derivatives $D^lf$ as sections of the bundle $\mathrm{Sym}^l(TM,\RB^q)$ over $M$ whose fibre over $x\in M$ is the space of symmetric, multilinear maps $T_xM\rightarrow\RB^q$. We equip each of the fibres with the multilinear operator norm:
\[
|S|_{l,\op}:=\sup_{v_1,\dots,v_l\in \overline{T_xM(1)}}|S[v_1,\dots,v_l]|,
\]
where the latter norm is taken in $\RB^q$. The supremum norm of a section $\xi$ of $\mathrm{Sym}^l(TM,\RB^q)$ is then
\[
\|\xi\|_{l,\op}:=\sup_{x\in M}|\xi(x)|_{l,op}.
\]
Given $L\geq 0$, we say that a section $\xi$ of $\mathrm{Sym}^l(TM,\RB^q)$ is \emph{Lipschitz} if there exists $L\geq 0$ such that
\[
\sup_{x\in M}\sup_{y\in B(x,c_M)\setminus\{x\}}\frac{|\xi(y)-P(x,y)\xi(x)|_l}{d_M(x,y)}\leq L,
\]
where $P(x,y)$ is the isometric map $\mathrm{Sym}^l(T_xM,\RB^q)\rightarrow\mathrm{Sym}^l(T_yM,\RB^q)$ induced by parallel transport along the unique minimal geodesic between $x$ and $y$ in $TM$. The \emph{Lipschitz constant} of a Lipschitz section $\xi$ is
\[
\mathrm{Lip}(\xi):=\inf\bigg\{L\geq0:\sup_{x\in M}\sup_{y\in B(x,c_M)\setminus\{x\}}\frac{|\xi(y)-P(x,y)\xi(x)|_l}{d_M(x,y)}\leq L\bigg\}.
\]

\begin{proposition}\cite[Lemma 4.2]{center_regularity}\label{prop:derivsclosed}
    Let $M$ be a Riemannian manifold with convexity radius $c_M>0$. Let $\Sigma$ denote the Banach space of bounded, continuous maps $\sigma:M\rightarrow\RB^q$ equipped with the supremum norm. Given $k\in\NB$, let $(b_1\dots,b_k, b_{k+1})$ be any tuple of positive numbers. Then the set $\Sigma(b_1,\dots,b_{k+1})$ of all $C^k$ functions $\sigma:M\rightarrow\RB^q$  with $D^k\sigma$ Lipschitz continuous and
    \[
    \|D^j\sigma\|_{j,\op}\leq b_j,\,\,\forall j\in\{1,\dots,k\},\qquad \mathrm{Lip}(D^k\sigma)\leq b_{k+1}
    \]
    is a closed subset of $\Sigma$.
\end{proposition}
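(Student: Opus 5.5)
The plan is to follow the proof of \cite[Lemma 4.2]{center_regularity}, reducing everything to two facts: a uniform Arzel\`a--Ascoli compactness for bounded derivatives, and the fact that the Lipschitz bound on $D^k\sigma$ passes to uniform limits. Let $\{\sigma_n\}\subset\Sigma(b_1,\dots,b_{k+1})$ converge to $\sigma\in\Sigma$ in the supremum norm. We must show three things: $\sigma$ is $C^k$; $\|D^j\sigma\|_{j,\op}\leq b_j$ for every $j$; and $\mathrm{Lip}(D^k\sigma)\leq b_{k+1}$.

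The argument is local. Fix $x\in M$ and work in normal coordinates on the geodesic ball $B(x,c_M/2)$, which is convex. Trivialise the bundles $\mathrm{Sym}^j(TM,\RB^q)$ there by parallel transport along radial geodesics. On this ball, the intrinsic quantities $D^j\sigma_n$ differ from the coordinate derivatives only by terms built from Christoffel symbols and lower-order derivatives. Since $\overline{B(x,c_M/2)}$ is compact, the coordinate derivatives of order at most $k$ are uniformly bounded. By the Lipschitz bound, the $k$-th coordinate derivatives are also uniformly Lipschitz, with constants depending only on the $b_j$ and the local geometry.

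Arzel\`a--Ascoli then gives, along a subsequence, $C^k$ convergence of $\sigma_n$ on the closed ball. The limit is $\sigma$, since $\sigma_n\to\sigma$ uniformly. The $C^k$ limit is unique, so the whole sequence converges in $C^k$ locally. Hence $\sigma$ is $C^k$ and $D^j\sigma_n\to D^j\sigma$ pointwise for all $j\leq k$.

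The bounds now pass to the limit pointwise. First, $|D^j\sigma(y)|_{j,\op}=\lim_n|D^j\sigma_n(y)|_{j,\op}\leq b_j$. Second, take $y\in B(x,c_M)\setminus\{x\}$. Since $P(x,y)$ is a fixed linear isometry, we get
\[
\frac{|D^k\sigma(y)-P(x,y)D^k\sigma(x)|_{k,\op}}{d_M(x,y)}=\lim_{n\to\infty}\frac{|D^k\sigma_n(y)-P(x,y)D^k\sigma_n(x)|_{k,\op}}{d_M(x,y)}\leq b_{k+1}.
\]
Taking suprema gives $\mathrm{Lip}(D^k\sigma)\leq b_{k+1}$. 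Continuity of $\sigma$ and boundedness are automatic, so $\sigma\in\Sigma(b_1,\dots,b_{k+1})$ and the set is closed.

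The main obstacle is the equicontinuity step: converting the intrinsic bound (parallel-transported Lipschitz control of $D^k\sigma_n$) into coordinate-wise uniform Lipschitz control of the $k$-th partials. The natural approach is to express the coordinate $k$-th derivative as $D^k\sigma_n$ plus a polynomial in $D^j\sigma_n$ ($j<k$) with smooth Christoffel-symbol coefficients. The lower-order terms are Lipschitz because their derivatives are bounded by the $b_j$, and parallel transport is smooth in the endpoints. Each step is routine given compactness of closed convex balls; uniformity over all $x$ is not needed, because convergence is only required pointwise.
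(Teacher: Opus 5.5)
Your proof is correct, but it takes a different route from the paper's. The paper never extracts subsequences or passes to charts. From the first-order Taylor estimate along a unit-speed geodesic it derives the interpolation inequality $|D\eta(x)|_{\op}\le 2\|\eta\|/h+\mathrm{Lip}(D\eta)\,h$ for $h\in(0,c_M)$. It applies this to $\eta=\sigma_n-\sigma_m$ (for which $\mathrm{Lip}(D\eta)\le 2b_2$) and optimises over $h$ to obtain $|D\sigma_n(x)-D\sigma_m(x)|_{\op}\le 2\sqrt{b_2}\,\|\sigma_n-\sigma_m\|^{1/2}$. Hence $(D\sigma_n)$ is uniformly Cauchy, $\sigma$ is $C^1$ with $D\sigma_n\to D\sigma$ uniformly, and the bounds pass to the limit exactly as in your final step. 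Higher $k$ is handled inductively by applying the same inequality to $D^j\eta$, viewed as a section of $\mathrm{Sym}^j(TM,\RB^q)$.

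That route is intrinsic and quantitative: it gives a H\"older-$\tfrac12$ rate for the convergence of derivatives. It also bypasses the comparison between Levi-Civita derivatives and coordinate partials, which is the most laborious part of your plan.

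Your route is softer. Arzel\`a--Ascoli plus uniqueness of limits gives only locally uniform convergence of the $D^j\sigma_n$, but that is all closedness needs. It handles all orders $j\le k$ at once rather than by induction. It also uses only local compactness. The paper's estimate, by contrast, implicitly needs geodesics of length $h$ from $x$ in every direction. Near the edge of an incomplete $M$ such as $E_{cu}(4r)$, these exist only for small $h$, so there too one really only gets locally uniform convergence.

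One small correction in the same spirit. For incomplete $M$, the closed ball $\overline{B(x,c_M/2)}$ need not be compact. Use a small closed normal ball about $x$ instead; since your argument is purely local, nothing else changes.
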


\begin{proof}
    If $k=0$ then the result is standard. Suppose then that $k=1$, and fix $b_1,b_2>0$. Suppose that $\{\sigma_n\}_{n\in\NB}$ is a sequence in $\Sigma$ with $\|D\sigma_n\|_{1,\op}\leq b_1$ and $\mathrm{Lip}(D\sigma_n)\leq b_2$ converging in $\Sigma$ to some section $\sigma$. It must be shown that $\|D\sigma\|_{1,\op}\leq b_1$ and $\mathrm{Lip}(D\sigma)\leq b_2$.

    For any continuously differentiable section $\eta$ with Lipschitz derivative and any $x,y\in M$ with $d_M(x,y)\leq c_M$, let $\gamma$ be the unique length-minimising geodesic connecting $x$ to $y$ parametrised by arc length. By the mean value theorem,
    \[
    \eta(y)-\eta(x) = \int_{0}^1D\eta(\gamma(t))\,\dot{\gamma}(t)\,dt
    \]
    so that
    \begin{align*}
    |\eta(y)-\eta(x)-D\eta(x)\dot{\gamma}(0)| &= \bigg|\int_0^1(D\eta(\gamma(t))-D\eta(x)P(\gamma)^t_0)\dot{\gamma}(t)\,dt\bigg|\\&\leq d_M(x,y)\int_0^1|D\eta(\gamma(t))-D\eta(x)P(\gamma)^t_0|_{\op}\,dt\\&\leq d_M(x,y)^2\,\mathrm{Lip}(D\eta),
    \end{align*}
    where $P(\gamma)^t_0:T_{\gamma(t)}M\rightarrow T_{\gamma(0)}M$ is parallel transport. Consequently:
    \[
    |D\eta(x)\dot{\gamma}(0)|\leq|\eta(y)-\eta(x)| + |\eta(y)-\eta(x)-D\eta(x)\dot{\gamma}(0)|\leq 2\|\eta\| + d_M(x,y)^2\,\mathrm{Lip}(D\eta).
    \]
    Thus, for any unit vector $v\in T_xM$ and any $h\in (0,c_M)$, one has
    \[
    |D\eta(x)[hv]|\leq 2\|\eta\|_M + \mathrm{Lip}(D\eta)h^2\Rightarrow |D\eta(x)|_{\op}\leq \frac{2\|\eta\|}{h} + \mathrm{Lip}(D\eta)h.
    \]
    In particular, taking $\eta:=\sigma_n-\sigma_m$ and using $\mathrm{Lip}(D(\sigma_n-\sigma_m))\leq 2b_2$, one has
    \[
    |D\sigma_n(x)-D\sigma_m(x)|_{\op}\leq 2\bigg(\frac{\|\sigma_n-\sigma_m\|}{h} + b_2h\bigg),\qquad\forall h\in(0,c_M).
    \]
    Since $\sigma_n-\sigma_m\rightarrow0$, for all $n,m$ sufficiently large one has $\sqrt{\|\sigma_n-\sigma_m\|/b_2}\leq c_M$, hence
    \[
    |D\sigma_n(x)-D\sigma_m(x)|_{\op}\leq 2\inf_{h\in(0,c_M)}\bigg(\frac{\|\sigma_n-\sigma_m\|}{h}+b_2h\bigg) = 2\sqrt{b_2}\|\sigma_n-\sigma_m\|^{\frac{1}{2}}.
    \]
    Since $\sigma_n$ is a Cauchy sequence, therefore, so too is $D\sigma_n$, implying that $\sigma$ is differentiable with $D\sigma_n\rightarrow D\sigma$ uniformly as sections of $\mathrm{Sym}^1(TM,\RB^q)$. In particular, $\|D\sigma\|_{1,\op}\leq b_1$. Finally, for each $\delta>0$ there is $N\in\NB$ such that for any $x,y\in M$ with $y\in B(x,c_M)$, one has
    \begin{align*}
    |D\sigma(y)-P(x,y)D\sigma(y)|_{\op}&\leq |D\sigma(y)-D\sigma_n(y)|_{\op} + |D\sigma_n(y)-P(x,y)D\sigma_n(x)|_{\op} + |D\sigma_n(x)-D\sigma(x)|_{\op}\\&\leq \delta  + b_2\,d_M(x,y),
    \end{align*}
    implying that $D\sigma$ is Lipschitz with $\mathrm{Lip}(D\sigma)\leq b_2$. The result for $k\geq 2$ follows by a formally identical argument applied inductively.
\end{proof}

\bibliographystyle{plain}
\bibliography{references}
\end{document}